\newtheorem{theorem}{Theorem}[section]
\newtheorem{corollary}[theorem]{Corollary}
\newtheorem{lemma}[theorem]{Lemma}
\theoremstyle{definition}
\newtheorem{definition}[theorem]{Definition}
\newtheorem{example}[theorem]{Example}
\theoremstyle{remark}
\newtheorem{remark}[theorem]{Remark}
\numberwithin{equation}{section}
\newcommand{\op}[1]{{\mathcal #1}}
\newcommand{\mat}[1]{{\mathbf #1}}
\newcommand{\interp}{{\mathcal I}}
\newcommand{\eqdef}{{\stackrel{\mathrm{def}}{=}}}
\newcommand{\innprd}[2]{\left< #1 , #2 \right>}
\newcommand{\linnprd}[2]{\left( #1 , #2 \right)}
\def\R{\mathbb{R}}
\def\N{\mathbb{N}}
\begin{document}

% \title[short text for running head]{full title}
\title[Sufficient conditions for strong discrete maximum principles]
{Sufficient conditions for strong discrete maximum 
  principles in finite element solutions of linear and semilinear elliptic equations}

%    Only \author and \address are required; other information is
%    optional.  Remove any unused author tags.

%    author one information
% \author[short version for running head]{name for top of paper}
\author{Andrei Dr{\u{a}}g{\u{a}}nescu}
\address{Department
    of Mathematics and Statistics, University of Maryland, Baltimore
    County, 1000~Hilltop Circle, Baltimore, Maryland 21250}
\curraddr{}
\email{draga@umbc.edu}
\thanks{The first author was supported in part by NSF Award 2409951.}

%    author two information
\author{L.~Ridgway~Scott}
\address{Department of Mathematics, University of Chicago, Chicago, 
    5734 S. University Avenue, Chicago, Illinois, 60637}
\curraddr{}
\email{ridg@uchicago.edu}
\thanks{}

%    \subjclass is required.
\subjclass[2020]{Primary 65N30}

\date{}

\dedicatory{}

%    Abstract is required.
\begin{abstract}
We introduce a novel technique for proving global strong 
discrete maximum principles for  finite element discretizations
of linear and semilinear elliptic equations for cases when the common, 
matrix-based sufficient conditions are  not satisfied. The basic argument
consists of extending the strong form of discrete maximum principle from 
macroelements to the entire domain
via a connectivity argument. The method is applied to
discretizations of elliptic  equations with certain
pathological meshes, and to semilinear elliptic  equations.
\end{abstract}

\maketitle

%    Text of article.
\section{Introduction}
\label{sec:intro}

The preservation of qualitative properties constitutes a central theme
in the design of numerical methods for partial differential
equations. Among those properties, maximum principles have captured
the attention of many generations of numerical analysts, as they play
an essential role in ensuring that solutions maintain their physical
relevance. For example, it is not only desired, but sometimes critical
that quantities representing concentrations lie in the interval [0,1],
that computed densities are positive, or that fluxes across interfaces 
have the correct sign.

In this work we focus on discrete maximum principles (DMPs) for finite
element solutions of linear and semilinear elliptic equations. A short
and particular formulation of the DMP is that the maximum of a discrete subharmonic function 
cannot be achieved in the interior of its domain unless the function is constant.
At the discrete level we distinguish between the \emph{local} DMP, which refers to 
the DMP being satisfied on the union of elements with a common vertex, and 
the \emph{global} DMP, which refers to the entire domain; verifying
the latter is the ultimate goal, but also presents a greater challenge.
A long list of works~\cite{MR375802, MR443377, MR1654022, MR1803125, MR2121074, MR2226992, MR2392463, 
MR2520861, MR2607805, MR2501643, 
MR2914278, MR3425305, MR4848416} (to cite only a few)
was devoted to studying conditions under which appropriate forms of  
the global DMP hold for linear and nonlinear elliptic equations. 
Many more references can be found in the review article~\cite{MR4704682} and
the recent monograph~\cite{MR4900691}. A common element for all these articles is 
the hypothesis that certain key matrices have nonpositive off-diagonal elements;
for the case of the linear Poisson equation, this reduces to the necessity for 
the stiffness matrix to be an $M$-matrix (see~\cite{MR1753713} for definition).
Not only is this condition restrictive on the mesh (see Definition 2.2 in~\cite{MR4704682}), 
but it is equivalent to the local DMP to be satisfied
around each vertex. Hence, it is fair to say that most of the aforementioned works
use various techniques to globalize the DMP, after essentially assuming the local DMP holds everywhere.
However, in~\cite{MR2085400} (Section 6) it is shown that the global DMP can hold on certain meshes where 
local DMPs do not hold. Therefore, the nonpositivity of the off-diagonal entries is not a necessary condition
for the  global DMP (see also~\cite{MR4900691}, Example 6.10, p. 159). 

Moreover, an example is given~\cite{MR2085400} where the global DMP fails, even as the 
mesh size converges to zero. It is notable how challenging it is to find an example where the finite element 
spaces have good approximation properties, but the
global DMP fails; in fact the global DMP seems to  hold for many practical situations.
Hence, it is fair to say that there is a gap in the literature between the known sufficient and the 
necessary conditions for the global DMP to hold. In this paper we are providing a set of conditions
that aim to bridge this gap. We also note that approximation alone allows proving a weaker form of
the DMP~\cite{MR551291, MR4166450}.

The main contribution in this article is to provide a novel technique for proving a global 
\emph{strong} DMP (sDMP) for several classes of elliptic equations. The main ingredient  {\color{black}consists} 
of extending the  sDMPs from macroelements to the entire domain using a connectivity argument;
nonpositivity of the off-diagonal entries of the stiffness matrix is not assumed to hold everywhere.
The general theorems are applied to linear elliptic equations that include ``defects'' (edges
that lead to positive off-diagonal entries in the stiffness matrix), degenerate meshes, as well as semi-linear elliptic equations.

This paper is organized as follows. After introducing the maximum principles 
in Section~\ref{sec:problemform}, we present the main results in abstract form in Section~\ref{sec:mainres}. 
These are applied to linear elliptic equations in Section~\ref{sec:linelliptic}, where 
we also connect the new technique with classical results.
In Section~\ref{ssec:nonmonotedge} we apply our framework to problems with mesh defects, and
in Section~\ref{sec:semilinear} we  prove the sDMP for 
a class of semilinear elliptic equations; the latter results may not be completely new, 
but they further showcase the wide scope of our method's applicability. Some conclusions are formulated in Section~\ref{sec:conclusion}.

%Lit review:
%\begin{itemize}
%\item In~\cite{MR375802} Ciarlet
%\item In~\cite{MR2121074} KK05 nonlinear elliptic, offdiags $\le 0$.
%\item In~\cite{MR2226992} KK06 nonlinear elliptic, offdiags $\le 0$.
%\item In~\cite{MR2392463} KKK07 nonlinear elliptic, offdiags $\le 0$.
%\item In~\cite{MR3425305} KK15 nonlinear elliptic, offdiags $\le 0$.
%\item In~\cite{MR4092285} KKK20
%\item In~\cite{MR4848416} KK25 nonlinear elliptic, offdiags $\le 0$.
%\item In~\cite{MR4166450} weak Le Li
%\item In~\cite{MR551291} weak Schatz
%\item In~\cite{MR4900691} new book
%\item In~\cite{MR4704682} SIAM review
%\item In~\cite{MR1803125} Kor, Kri, Nei 01: elliptic, offdiags $\le 0$.
%\item In~\cite{MR2520861} Vej
%\item In~\cite{MR2607805} Vej
%\item In~\cite{MR2914278}  Wang Zhang
%\item In~\cite{MR443377} Strang and Fix (original modified)
%\item In~\cite{MR2501643} 09 Hannukainen: 3D prismatic, requires Stieltjes matrix
%\item In~\cite{MR4727060} we see mesh refinement that lead to isolated defects.
%\item In~\cite{MR1654022} Xu Z
%\end{itemize}

\section{Motivation and problem formulation}
\label{sec:problemform}
%%%%% CLASSICAL FORM
In this section we introduce the  model problems of interest and the classical maximum principles, on which
we model their discrete counterparts in Section~\ref{sec:mainres}.
\subsection{The continuous model problems and their maximum principles}
\label{ssec:contproblemform}
Let $D\subset \R^d$ ($d=2, 3$) be a polygonal or polyhedral domain, 
and consider the monotone semilinear elliptic boundary value problem
\begin{subequations}
\begin{equation}
%\begin{eqnarray}
%\begin{align}
\label{eq:slcontdef}
-\sum_{i,j=1}^d \partial_i(a_{ij}(x)\partial_j u(x)) + c(x,u(x))  = f(x)\ \ \mathrm{in} \ \  D,
\end{equation}
\begin{equation}
\label{eq:slcontBC}
u|_{\partial D} =  g,
\end{equation}
\end{subequations}
%\end{align}
%where $\op{A}$ is second order linear uniformly elliptic 
%operator in divergence form
with {\color{black}$a_{ij}=a_{ji}\in C^{0,1}(\overline{D})$} for $1\le i, j \le d$, and 
\begin{eqnarray}
\label{eq:ellop}
\underline{a}|v|^2\le \sum_{i,j=1}^d a_{ij}(x) v_i v_j \le \overline{a}|v|^2,\ \ \forall v\in \R^d
%\op{A}u = -\sum_{i,j=1}^d \partial_i(a_{ij}\partial_j u),
\end{eqnarray}
for some constants {\color{black}$0<\underline{a}\le \overline{a}$}. Assume the reaction term $c:D\times \R \to \R$ 
satisfies the following conditions:
\begin{align}
\label{eq:Cnondecreasing}
&\forall x\in D,\  c(x,0)=0, \ \ \mathrm{and}\ \ c(x,\cdot)\ \ \mathrm{is\  nondecreasing;}\\
&\label{eq:CinLP}
{\color{black}c(\cdot,0)\in L^{\frac{p}{2}}(D)\ \mathrm{for\ some\ }p>d;}\\
&\label{eq:cincrease}
\exists L_c>0, \ \ \forall x\in D,\  \forall u, v\in \R,\ |c(x,u)-c(x,v)| \le  L_c |u-v|.
\end{align}
This includes the linear case when 
\begin{align}
\label{eq:Creaclinear}
&c(x,u) = \tilde{c}(x)\: u
\end{align}
for some nonnegative function 
{\color{black}$\tilde{c}\in L^{\infty}(D)$}.
{\color{black}For existence, uniqueness, and regularity results for~\eqref{eq:slcontdef}-\eqref{eq:slcontBC} 
see~\cite{troltzsch2010optimal} and the references therein.}

%~\cite{MR2009948},
%Additional smoothness of the coefficients $a_{ij}$ and the function $c$ is usually imposed.
For a function $u$ we define $u^+ = \max (u,0)$, and $u^- = -\min(u,0)$. Note that $u^+,u^-\ge 0$ and $u=u^+-u^-$. 
%We say that $D$ satisfies the interior ball condition ar $x_0\in \partial D$ is there exists an open  
%ball $B\subseteq D$ so that $x_0\in \partial B$.
The continuous problem~\eqref{eq:slcontdef}-\eqref{eq:slcontBC} is 
known~\cite{gilbarg1977elliptic,evans2022partial} to satisfy the following strong maximum principles:
\begin{theorem}
\label{thm:cont_max_princLin}
Assume $c\equiv 0$ in~\eqref{eq:slcontdef}, $a_{ij}$ are continuously differentiable, and
$u\in C^2(D) \cap C^1(\overline{D})$ solves~\eqref{eq:slcontdef}-\eqref{eq:slcontBC}.\\
\textnormal{(i)} If $f\ge 0$, then
\begin{equation}
\label{eq:linmaxprincpos} 
\min_{\overline{D}}u \ge \min_{\partial D} u.
\end{equation}
In addition, if $u$ attains a  minimum over $\overline{D}$ at $x_0\in  D$ (i.e., an interior point), then
$u$ is constant.\\
\textnormal{(ii)} If $f\le 0$, then
\begin{equation}
\label{eq:linmaxprincneg} 
\max_{\overline{D}}u \le \max_{\partial D} u.
\end{equation}
In addition, if $u$ attains a  maximum over $\overline{D}$ at $x_0\in  D$ (i.e., an interior point), then
$u$ is constant.
\end{theorem}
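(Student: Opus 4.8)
The plan is to reduce the divergence-form equation to a non-divergence-form one and then quote the classical weak and strong maximum principles for uniformly elliptic operators that have \emph{no} zeroth-order term. Since $c\equiv 0$ and each $a_{ij}$ is continuously differentiable, I would first expand $\sum_{i,j}\partial_i(a_{ij}\partial_j u)=\sum_{i,j}a_{ij}\partial_{ij}u+\sum_j b_j\partial_j u$ with $b_j\eqdef\sum_i\partial_i a_{ij}$, so that \eqref{eq:slcontdef} becomes $\op L u\eqdef\sum_{i,j}a_{ij}\partial_{ij}u+\sum_j b_j\partial_j u=-f$ in $D$. The principal coefficients satisfy the ellipticity bounds \eqref{eq:ellop}, and the drift coefficients $b_j$ are continuous, hence bounded on the compact set $\overline D$ (on which $u$ also attains its extrema); the key structural point is that $\op L$ carries no zeroth-order term, so constants are $\op L$-harmonic and the operator is unaffected by adding constants or by $u\mapsto -u$.

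For part (ii), $f\le 0$ gives $\op L u=-f\ge 0$, i.e.\ $u$ is an $\op L$-subsolution. The weak maximum principle for uniformly elliptic operators without zeroth-order coefficient (\cite{gilbarg1977elliptic}, Ch.~3; \cite{evans2022partial}, Ch.~6) then yields $\max_{\overline D}u\le\max_{\partial D}u$, which is \eqref{eq:linmaxprincneg}. If in addition the maximum is attained at an interior point $x_0\in D$, Hopf's strong maximum principle applies---here one uses that $D$ is a connected open set---and forces $u$ to be constant on $D$, hence on $\overline D$ by continuity.

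Part (i) then follows by applying part (ii) to $v\eqdef -u$: since $\op L v=-\op L u=f\ge 0$, $v$ is an $\op L$-subsolution, and $\max_{\overline D}v\le\max_{\partial D}v$ translates, upon negating, into $\min_{\overline D}u\ge\min_{\partial D}u$, which is \eqref{eq:linmaxprincpos}. Likewise an interior minimum of $u$ is an interior maximum of $v$, so the strong maximum principle gives $v$---hence $u$---constant.

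I do not expect a genuine obstacle here, as the statement is classical; the only point requiring care is bookkeeping. One must check that passing to non-divergence form is legitimate, which rests precisely on the hypothesis $a_{ij}\in C^1$, and that the resulting drift $b_j$ is bounded, which holds because it is continuous on $\overline D$. It is also worth noting that no regularity of $\partial D$ is used: the weak maximum principle and the interior strong maximum principle are insensitive to boundary smoothness, so the polygonal/polyhedral nature of $D$ causes no difficulty, and the remaining content is a direct citation of the classical elliptic theory.
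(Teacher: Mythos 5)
Your argument is correct and is essentially the paper's own approach: the paper does not prove Theorem~\ref{thm:cont_max_princLin} but simply cites the classical elliptic theory in~\cite{gilbarg1977elliptic,evans2022partial}, and your reduction to non-divergence form (using $a_{ij}\in C^1$ to produce a bounded drift and no zeroth-order term) followed by the weak and Hopf strong maximum principles, with part~(i) obtained from part~(ii) via $v=-u$, is exactly the standard argument behind that citation.
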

\begin{theorem}
\label{thm:cont_max_princSL}
Assume $c$ is given by~\eqref{eq:Creaclinear}, with $c$ and $a_{ij}$ being continuously differentiable,
and $u\in C^2(D) \cap C^1(\overline{D})$ solves~\eqref{eq:slcontdef}-\eqref{eq:slcontBC}.\\
\textnormal{(i)} If $f\ge 0$, then
\begin{equation}
\label{eq:slmaxprincpos} 
\min_{\overline{D}}u \ge -\max_{\partial D} u^-.
\end{equation}
In addition, if $u$ attains a nonpositive minimum over $\overline{D}$ at $x_0\in  D$ (i.e., an interior point), then
$u$ is constant.\\
\textnormal{(ii)} If $f\le 0$, then
\begin{equation}
\label{eq:slmaxprincneg} 
\max_{\overline{D}}u \le \max_{\partial D} u^+.
\end{equation}
In addition, if $u$ attains a nonnegative maximum over $\overline{D}$ at $x_0\in  D$ (i.e., an interior point), then
$u$ is constant.
\end{theorem}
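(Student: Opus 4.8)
The plan is to deduce both statements from the classical strong maximum principle for the scalar divergence-form operator $\mathcal{L}w \defeq \sum_{i,j=1}^{d}\partial_i(a_{ij}\partial_j w)-\tilde{c}(x)\,w$. Since the $a_{ij}$ are continuously differentiable, $\mathcal{L}$ can be rewritten in non-divergence form with continuous coefficients and nonpositive zeroth-order coefficient $-\tilde{c}\le 0$, so the Hopf maximum principle and the Hopf boundary lemma (see \cite{gilbarg1977elliptic,evans2022partial}), which require no regularity of the underlying domain, are available; Theorem~\ref{thm:cont_max_princLin} is its special case $\tilde{c}\equiv 0$. First I would reduce part~\textnormal{(i)} to part~\textnormal{(ii)} by replacing $u$ with $w=-u$: when $f\ge 0$ one has $\mathcal{L}w=-f\le 0$, and since $(-u)^{+}=u^{-}$, $\max_{\overline{D}}(-u)=-\min_{\overline{D}}u$, and ``$u$ attains a nonpositive minimum at an interior point'' is equivalent to ``$w$ attains a nonnegative maximum at an interior point'', statement~\textnormal{(ii)} applied to $w$ yields \textnormal{(i)} verbatim. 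Hence it suffices to prove \textnormal{(ii)}.

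For the inequality~\eqref{eq:slmaxprincneg}, set $m\defeq\max_{\partial D}u^{+}\ge 0$ and consider the open set $\Omega\defeq\{x\in D:u(x)>0\}$. On $\Omega$ one has $\tilde{c}\,u\ge 0$, so $-\sum_{i,j}\partial_i(a_{ij}\partial_j u)=f-\tilde{c}\,u\le 0$ there; hence on every connected component $\Omega_0$ of $\Omega$ the function $u$ meets the hypotheses of Theorem~\ref{thm:cont_max_princLin}\textnormal{(ii)} (it lies in $C^{2}(\Omega_0)\cap C^{1}(\overline{\Omega_0})$, the $a_{ij}$ are $C^1$, the right-hand side is $\le 0$), giving $\max_{\overline{\Omega_0}}u\le\max_{\partial\Omega_0}u$. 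A boundary point of $\Omega_0$ that lies in $D$ must, by continuity of $u$ and openness of $\Omega$, satisfy $u=0$, so $\partial\Omega_0\subset\partial D\cup\{u=0\}$ and therefore $\max_{\partial\Omega_0}u\le m$; combined with $u\le 0\le m$ on $D\setminus\Omega$ this gives $\max_{\overline{D}}u\le m$.

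For the constancy assertion, suppose $u$ attains a nonnegative maximum $M=\max_{\overline{D}}u\ge 0$ at an interior point $x_0\in D$. If $M>0$, then $x_0$ lies in some component $\Omega_0$ of $\Omega$ and $u$ attains its maximum over $\overline{\Omega_0}$ at the interior point $x_0$; the strong part of Theorem~\ref{thm:cont_max_princLin}\textnormal{(ii)}, applied on $\Omega_0$ with right-hand side $f-\tilde{c}\,u\le 0$, forces $u\equiv M$ on $\Omega_0$, whence $u=M>0$ on $\overline{\Omega_0}$ by continuity; then $\partial\Omega_0$ cannot meet $D$ (a point $y\in\partial\Omega_0\cap D$ would satisfy $u(y)>0$, hence lie in the open set $\Omega$ and, being a limit of points of $\Omega_0$, in $\Omega_0$ itself), so $\Omega_0$ is both open and closed in the connected set $D$, i.e.\ $\Omega_0=D$ and $u\equiv M$. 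The case $M=0$ is the main obstacle: here $\Omega=\emptyset$, on $\{u<0\}$ the term $\tilde{c}\,u$ carries the unfavorable sign, $u$ is no longer a subsolution of the pure second-order operator, and the reduction to Theorem~\ref{thm:cont_max_princLin} breaks down. I would instead argue directly with $\mathcal{L}$: since $\mathcal{L}u=-f\ge 0$, $\tilde{c}\ge 0$, and $u$ attains its maximum value $0\ge 0$ at the interior point $x_0$, the nonnegativity of that maximum value --- precisely the hypothesis the Hopf lemma needs in order to absorb the zeroth-order term --- permits the classical argument: one selects a ball $B\subset\{u<0\}$ with $\overline{B}\subset D$ and a point $y\in\partial B$ with $u(y)=0$, and the Hopf boundary lemma then yields $\partial_\nu u(y)>0$, contradicting $\nabla u(y)=0$ at the interior maximum point $y$; hence $u\equiv 0$ on $D$, and on $\overline{D}$ by continuity. (Alternatively, the whole constancy statement follows in one stroke from the strong maximum principle for $\mathcal{L}$, sidestepping the case split, but the argument above keeps the dependence on Theorem~\ref{thm:cont_max_princLin} explicit.)
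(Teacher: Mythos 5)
Your proof is correct, but note that the paper does not actually prove Theorem~\ref{thm:cont_max_princSL}: it states it as a known classical result and cites \cite{gilbarg1977elliptic,evans2022partial}, remarking explicitly that the continuous maximum principles play no role in the later analysis. Your argument is a valid reconstruction of that classical content. The reduction of (i) to (ii) via $w=-u$, the localization to components of $\{u>0\}$ where the reaction term has the favorable sign so that Theorem~\ref{thm:cont_max_princLin} applies, and the open-and-closed argument for the case $M>0$ are all sound (the weak and strong maximum principles you invoke on the components $\Omega_0$ indeed require no boundary regularity, and $u\in C^1(\overline{D})$ gives the needed continuity up to $\partial\Omega_0$). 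You also correctly identify the case $M=0$ as the point where the reduction to the $\tilde c\equiv 0$ theorem breaks down, and your fallback — the Hopf boundary-point lemma applied to a ball in $\{u<0\}$ tangent to the zero set, contradicting $\nabla u=0$ at the interior maximum — is exactly the standard proof of the strong maximum principle for operators with nonpositive zeroth-order coefficient (Gilbarg--Trudinger, Theorem~3.5). As you note yourself, invoking that theorem directly for $\mathcal{L}w=\sum_{i,j}\partial_i(a_{ij}\partial_j w)-\tilde c\,w$ disposes of both the inequality and the constancy assertion in one step and is what the paper's citation implicitly intends; your longer route has the pedagogical merit of making the dependence on Theorem~\ref{thm:cont_max_princLin} explicit, at the cost of the case split.
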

%The maximum principle has a simplified form in the absence of a reaction term:
%
In this work we analyze a set of conditions under which maximum principles similar to 
Theorems~\ref{thm:cont_max_princLin}-\ref{thm:cont_max_princSL} hold for finite element discretizations 
of~\eqref{eq:slcontdef}-\eqref{eq:slcontBC}. In light of this goal, we note that 
for the continuous problem the strong maximum principle holds on any sufficiently 
regular subdomain $E\subseteq D$; for the discretized problem such maximum principles may hold
on the full domain, but not on subdomains, and {\color{black}vice versa}.

We also remark that the maximum principles for the continuous problem
are used in this work only to serve as models for  their discrete
counterparts; the continuous problem and its properties does not play
any role in the analysis of the discrete  maximum principle that we
present in the next sections.

%%%%% FINITE ELEMENT FORM
\subsection{Finite element discretization}
\label{ssec:discproblemform}
The weak form of~\eqref{eq:slcontdef}-\eqref{eq:slcontBC}
reads: given $u^b\in H^1(D)$ so that $u^b|_{\partial D}=g$ and 
$f\in H^{-1}(D)$,  find $u\in H^1(D)$ so that \mbox{$u^0=u-u^b\in H_0^1(D)$} satisfies
\begin{eqnarray}
\label{eq:weakellcont}
a_D(u^0+u^b,v) + \linnprd{c(\cdot, u^0+u^b)}{v}_{D} =\innprd{f}{v}_D,\ \ \forall v\in H_0^1(D)\ ,
\end{eqnarray}
where $\innprd{\cdot}{\cdot}_D$ is dual pairing, $\linnprd{\cdot}{\cdot}_D$ is the $L^2$-inner product on $D$, and
%and $a$ is the standard bilinear form on $H^1(D)$ (coercive on $H_0^1(D)$) given by
\begin{eqnarray}
\label{eq:bilform}
a_D(u,v)=\sum_{i,j=1}^d \int_D a_{ij}\:\partial_i u \:\partial_j v,\ \ \forall u, v\in H^1(D).
\end{eqnarray}
Let $\op{T}_h$ be a triangulation of $\overline{D}$ with vertices 
$(P_i)_{1\le i\le N}$, and consider the space $V^h(\overline{D})$
of continuous piecewise linear functions with respect to $\op{T}_h$, and 
$V_0^h(\overline{D})=\{\varphi\in V^h(\overline{D})\ : \varphi|_{\partial D} = 0\}$.
{\color{black}Let} $(\varphi_i)_{1\le i\le N}$ the associated standard nodal basis in 
$V^h(\overline{D})$ (including boundary nodes). 
Denote by $\interp_h:C(\overline{D})\to V_h(\overline{D})$ be the nodal interpolation operator defined by
\begin{align*}
\interp_h u = \sum_{i=1}^N u(P_i) \varphi_i.
\end{align*}
The finite element solution of~\eqref{eq:slcontdef}-\eqref{eq:slcontBC} {\bf on $D$}
reads:
 find $u_h\in V^h(\overline{D})$ of the form $u_h = u_h^0+ u_h^b$
with $u_h^0\in V_0^h(\overline{D})$, and $u_h^b = \interp_h  u^b$, so that 
\begin{eqnarray}
\label{eq:weakelldisc}
a_D(u_h^0+u_h^b,v) + \linnprd{c(\cdot,u_h^0+u_h^b)}{v}_D=\innprd{f}{v}_D,\ \ \forall v\in V_0^h(\overline{D}).
\end{eqnarray}
A set $E \subseteq D$ is called a \emph{discrete subdomain} (of $D$)
if $E$ is a union of {\color{black}simplices} of $\op{T}_h$.
When solving the problem on a discrete subdomain $E$, then $E$ must replace~$D$ everywhere in~\eqref{eq:weakelldisc}.
Due to the potential nonlinearity of the reaction term $c$ in the second argument,
the term $\linnprd{c(\cdot, u_h^0+u_h^b)}{v}_D$ is  replaced with
a cubature, preferably one that involves only function values at the vertices. This is equivalent to
interpolating the aforementioned term prior to integration. More precisely,~\eqref{eq:weakelldisc} 
can be solved  in practice using the following formulation:
\begin{eqnarray}
\label{eq:weakelldiscinterp}
a_D(u_h^0+u_h^b,v) + \linnprd{\interp_h c(\cdot,u_h^0+u_h^b)}{v}_D=\innprd{f}{v}_D,\ \ \forall v\in V_0^h(\overline{D}).
\end{eqnarray}
The formulations~\eqref{eq:weakelldisc} and~\eqref{eq:weakelldiscinterp} are certainly equivalent if 
$c(x,u)=\tilde{c} u$ with $\tilde{c}\ge 0$  constant, but in general {\color{black}they} are not.
{\color{black}Cf.~\cite{MR2009948}, both the continuous variational problem~\eqref{eq:weakellcont} as well as its
discrete version~\eqref{eq:weakelldisc} are well-posed.}
In preparation for the results in Section~\ref{sec:mainres}, we highlight the following localization property of the finite element 
solutions:
\begin{lemma}
\label{lma:restriction}
Let $E\subseteq D$ be a discrete subdomain of $D$. If $u^D_h$ is the solution of~\eqref{eq:weakelldisc} 
{\color{black}or~\eqref{eq:weakelldiscinterp}} on ${D}$ 
and $u^E_h$ solves~\eqref{eq:weakelldisc} {\color{black}or~\eqref{eq:weakelldiscinterp}, respectively,} on ${E}$ 
with boundary condition
$u^E_h|_{\partial E} = u^D_h|_{\partial E}$, then $u^E_h = u^D_h|_{E}$. 
\end{lemma}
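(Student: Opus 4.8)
The plan is to exploit the fact that the nodal basis functions $\varphi_i$ associated with vertices interior to $E$ are supported entirely in $\overline{E}$, so that testing the global equation with such functions reproduces exactly the local equation on $E$. First I would set $w = u^D_h|_E$ and observe that $w\in V^h(\overline{E})$ since $E$ is a discrete subdomain (a union of triangles of $\op{T}_h$), and that $w|_{\partial E} = u^D_h|_{\partial E}$, which is the prescribed boundary datum for the problem on $E$. Writing $w = w^0 + w^b$ with $w^b = \interp_h u^D_h|_{\partial E}$ (extended by the prescribed boundary values, or equivalently $w^b = u^D_h|_E$ minus its interior part) and $w^0\in V_0^h(\overline{E})$, it remains to check that $w^0$ satisfies the discrete weak form~\eqref{eq:weakelldisc} (resp.~\eqref{eq:weakelldiscinterp}) on $E$. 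By uniqueness of the solution to the discrete problem on $E$ — which follows from coercivity of $a_E$ on $V_0^h(\overline{E})$ together with the monotonicity of $c(x,\cdot)$ from~\eqref{eq:Cnondecreasing}–\eqref{eq:cincrease}, giving a strictly monotone operator — this identification forces $u^E_h = w$, which is the claim.

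The key computation is the following. Fix any $v\in V_0^h(\overline{E})$; extend it by zero to all of $\overline{D}$, obtaining a function $\tilde v\in V_0^h(\overline{D})$, because $v$ vanishes on $\partial E$ and hence the zero extension is continuous and piecewise linear on $\op{T}_h$. Since $\tilde v$ is supported in $\overline{E}$, both the bilinear form and the $L^2$-pairings localize:
\begin{equation}
a_D(u^D_h,\tilde v) = a_E(u^D_h|_E,v),\qquad \linnprd{c(\cdot,u^D_h)}{\tilde v}_D = \linnprd{c(\cdot,u^D_h|_E)}{v}_E,
\end{equation}
and likewise $\innprd{f}{\tilde v}_D = \innprd{f}{v}_E$; for the interpolated formulation~\eqref{eq:weakelldiscinterp} one uses in addition that $\interp_h$ commutes with restriction to a discrete subdomain, so $\interp_h c(\cdot,u^D_h)|_E = \interp_h\bigl(c(\cdot,u^D_h|_E)\bigr)$ as elements of $V^h(\overline{E})$. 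Plugging $\tilde v$ into the global equation~\eqref{eq:weakelldisc} (resp.~\eqref{eq:weakelldiscinterp}) and using these identities yields exactly the local equation~\eqref{eq:weakelldisc} (resp.~\eqref{eq:weakelldiscinterp}) on $E$ with test function $v$. Since $v\in V_0^h(\overline{E})$ was arbitrary, $w^0 := u^D_h|_E - w^b$ solves the local problem.

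The only genuinely delicate point is the support/restriction bookkeeping at $\partial E$: one must make sure that the zero extension of a function in $V_0^h(\overline{E})$ really lands in $V_0^h(\overline{D})$ — this is exactly where the hypothesis that $E$ is a union of whole triangles (rather than an arbitrary subset) is used, since it guarantees $\partial E$ is a union of faces of $\op{T}_h$ and the extended function remains piecewise linear without introducing spurious degrees of freedom — and that the boundary lift $w^b$ for the local problem can be taken to be $\interp_h(u^D_h|_{\partial E})$, matching the definition of the local discrete problem. Everything else is a direct consequence of the locality of integration against compactly supported test functions and the uniqueness of the (strictly monotone, hence well-posed) discrete problem on $E$. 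I would therefore organize the write-up as: (1) well-posedness of the local problem; (2) the extension-by-zero observation and the resulting localization identities; (3) conclude by uniqueness.
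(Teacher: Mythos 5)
Your proposal is correct and follows essentially the same route as the paper: the core step in both is that test functions in $V_0^h(\overline{E})$ extend by zero to test functions on $\overline{D}$ (equivalently, are spanned by the nodal basis functions supported in $\overline{E}$), so the global equation localizes and $u^D_h|_E$ solves the problem on $E$ with the correct boundary data. The only difference is that you make explicit the uniqueness/well-posedness of the local problem (via coercivity and the monotonicity of $c(x,\cdot)$) needed to conclude $u^E_h = u^D_h|_E$, a point the paper leaves implicit.
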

\begin{proof}
{\color{black}The argument is the same for both formulations, so we focus on the former.}
First note that $u^D_h|_E$ satisfies the required boundary condition on $E$ (in a trivial manner). 
%(it is equal to itself). 
Consider the subset of nodal basis functions supported in $\overline{E}$, that is,  
$N_E = \{i\: :\: \mathrm{supp}(\varphi_i)\subseteq \overline{E}\}$. 
Since~\eqref{eq:weakelldisc} holds for all $v=\varphi_i$ with  
$i\in N_E$, all the integrals in~\eqref{eq:weakelldisc} are, in effect,  restricted to $E$. 
Also note that $V_0^h(\overline{E})$ is generated by the set $\{\varphi_i|_E\: :\: i\in N_E\}$.
Hence, $u^D_h|_E$ satisfies~\eqref{eq:weakelldisc} with the set $E$ replacing~$D$.
\end{proof}
Essentially, Lemma~\ref{lma:restriction} states that the restriction of a finite element 
solution to a discrete subdomain $E$ is a finite element solution on $E$ with appropriate boundary conditions.

\section{Abstract strong discrete maximum principles}
\label{sec:mainres}
\subsection{Abstract solution operators}
\label{ssec:abst_solops}
Denote  $V^h(\partial D) = \{\varphi|_{\partial D}\: :\: \varphi\in V^h(\overline{D})\}$.
We say that $f\in (V_0^h(\overline{D}))^*$ is \emph{nonnegative} (or \emph{nonpositive}, respectively),
if $\innprd{f}{\varphi_i}_D\ge 0$ (respectively,  $\innprd{f}{\varphi_i}_D\le 0$), 
for all nodal basis functions $\varphi_i$, $i=1,\dots, N$. Furthermore, if $E\subseteq D$ is a discrete subdomain,
we denote by $f_E$ the natural restriction of  $f$ to $(V_0^h(\overline{E}))^*$, which is defined  by
$\innprd{\varphi}{f_E} = \innprd{\varphi^D}{f}$, where $\varphi^D$ is the extension with $0$ of $\varphi\in V_0^h(\overline{E})$
to  $\overline{D}$. {\color{black}Clearly}, if $f$ is nonnegative/nonpositive, then $f_E$ is also nonnegative/nonpositive, respectively.
The objects for which we prove  DMPs are the following abstract solution operators.
%%%
\begin{definition}
\label{def:soloperators}
Assume for each discrete  subdomain $E\subseteq D$ we have an operator
$\op{S}^E_h: (V_0^h(\overline{E}))^*\times V^h(\partial E) \to V^h(E)$. We say that the
family $(\op{S}^E_h)_{E\subseteq D}$ is a \emph{consistent family of solution operators} if for all
discrete subdomains  $E\subseteq F \subseteq D$ and any $(f,u_h^b) \in (V_0^h(\overline{F}))^*\times V^h(\partial F)$
we have
\begin{equation}
\label{eq:consistency}
\op{S}^E_h(f_E,u_h|_{\partial E}) = u_h|_{E},
\end{equation}
where $u_h = \op{S}^F_h(f,u^b_h)$.
\end{definition}
Note that Lemma~\ref{lma:restriction} implies that the family of  solution operators
$\left(\op{S}^E_h\right)_{E\subseteq D}$ of the discrete semilinear elliptic equation~\eqref{eq:weakelldisc}, 
defined by 
\begin{align*}
\op{S}^E_h(f,u_h^b) \ \eqdef\   u_h = u_h^0 + u_h^b,
\end{align*}
is consistent, i.e., it satisfies Definition~\ref{def:soloperators}, where $D$ is replaced by $E$ in~\eqref{eq:weakelldisc}. 

We now describe the DMPs for which we provide sufficient conditions in Sections~\ref{ssec:mainresults}--\ref{ssec:moreDMPs}. 
%~\ref{ssec:mainresults}. 
%These DMPs are modeled on their continous counterparts. 
Note that these definitions refer to a solution operator associated with {\bf a single} discrete subdomain, 
not to  the entire family of consistent solution operators. In the interest of the presentation, we will focus our 
discussion on the case when $f$ is nonnegative.
\begin{definition}%[SDMP-A]
\label{def:sdmpL}
Assume $f\in (V_0^h(\overline{D}))^*$ is nonnegative.\\
\textnormal{(i)} We say that a solution operator satisfies the \emph{A-version of the weak discrete maximum principle} 
(wDMP-A) if 
\begin{equation}
\label{eq:ldmppos} 
\min_{\overline{D}}\op{S}^D_h(f,u_h^b) \ge \min_{\partial D} u_h^b,\ \ \forall u_h^b\in V^h(\partial D).
\end{equation}
\textnormal{(ii)} We say that a solution operator satisfies the \emph{A-version of the strong discrete maximum principle} 
(sDMP-A) if~\eqref{eq:ldmppos} holds and, in addition, 
if $u_h = \op{S}^D_h(f,u_h^b)$ attains a  {\bf global} minimum over $\overline{D}$ at an interior vertex in $D$, then
$u_h$ is constant.
\textnormal{(iii)} We say that a solution operator satisfies the \emph{B-version of the weak discrete maximum principle} 
(wDMP-B) if
\begin{equation}
\label{eq:sldmppos} 
\min_{\overline{D}}\op{S}^D_h(f,u_h^b) \ge -\max_{\partial D} (u_h^b)^-,\ \ \forall u_h^b\in V^h(\partial D).
\end{equation}
\textnormal{(iv)} We say that a solution operator satisfies the \emph{B-version of the strong discrete maximum principle} 
(sDMP-B) if~\eqref{eq:sldmppos} holds and, in addition, if $u_h = \op{S}^D_h(f,u_h^b)$ attains a {\bf global 
nonpositive} minimum over $\overline{D}$ at an interior vertex in $D$, then
$u_h$ is constant.\\
\textnormal{(v)} We say that a solution operator satisfies the \emph{restricted version of the strong discrete maximum principle} 
(sDMP-R) if it satisfies sDMP-A whenever $u_h = \op{S}^D_h(f,u_h^b)$ is nonnegative.
\end{definition}
While the A and B versions of the DMP need no additional motivation, as they mimic the continuous counterparts, sDMP-R
is introduced as a technical tool for proving  Theorem~\ref{thm:wdmpa}, as it 
is related to the case when $c$ is given by~\eqref{eq:Creaclinear} with $\tilde{c}\le 0$. We remark  that the main
path to proving DMPs goes through proving the strong versions sDMP-A and sDMP-B, while the weak versions will be proved using 
a perturbation argument. 
{\color{black}We also note that sDMP-R is weaker  than sDMP-A, since 
it is described by the same properties, but applies only under restrictive conditions. Clearly, sDMP-A implies wDMP-A. 
The relationship between sDMP-A and sDMP-B is discussed below.}

\begin{remark}
\label{rem:A-implies-B}
Note that sDMP-A is a stronger condition than sDMP-B. Indeed, assume $\op{S}^D_h$ satisfies sDMP-A.
Let $f\in (V_0^h(\overline{D}))^*$ be nonnegative, and $u_h^b\in V^h(\partial D)$ be arbitrary. 
Denote by $L  = \min u_h^b$, and 
$u_h = \op{S}^D_h(f,u_h^b)$. If $L\ge 0$, then $(u_h^b)^- = 0$. Cf.~\eqref{eq:ldmppos} we have 
\begin{align}
\label{eq:l-implies-sl}
\min_{\overline{D}} u_h \ge L  \ge 0 = -\max (u_h^b)^-.
\end{align}
Furthermore, if $ u_h(x) = -\max (u_h^b)^-$ for some interior point $x\in D$, then~\eqref{eq:l-implies-sl} implies that
\begin{align*}
u_h(x) = \min_{\overline{D}} u_h = L = 0.
\end{align*}
By Definition~\ref{def:sdmpL} (ii), $u_h$ is constant on $\overline{D}$.
If $L<0$, then there exists $x\in \partial D$ so that $u_h^b(x) = L<0$, so $(u_h^b)^-(x) = -L$. Hence, $L=-\max (u_h^b)^-$; therefore,
\begin{align*}
\min_{\overline{D}} u_h \ge L = -\max (u_h^b)^-.
\end{align*}
Again, if equality holds above, then $u_h$ is constant on $\overline{D}$.
\end{remark}

\subsection{Main results}
\label{ssec:mainresults}
Perhaps the most desired property  for families of discrete solution operators to have is that {\bf every member} of the family
satisfies some form of a DMP. This property is expressed in the literature as the solution operator
satisfying both  a {\bf local} and a {\bf global} DMP. We will see later how this property is related to the classical angle condition
in case of the Poisson equation.
However, in this work we are focused on sufficient conditions under which the 
solution operator of the {\bf entire domain} $D$ satisfies a certain DMP, meaning we are primarily targeting  global DMPs. 
%The main idea behind the theorems in  this section is the following: if we assume that $D$ is covered 
%-- in a precise sense -- with discrete subdomains $E$ for which $\op{S}^E_h$ satisfies the SDMP-A or SDMP-B, then 
%$\op{S}^D_h$ satisfies the SDMP-A or SDMP-B, respectively. 
The following result describes sufficient conditions for strong DMPs to hold on the entire domain.

\begin{theorem}
\label{thm:DMP}
Let $(\op{S}^E_h)_{E\subseteq D}$ be a {consistent family of solution operators},
and the triangulation $\op{T}_h$ be so that the graph of the interior vertices {\color{black}is connected}, 
and that every boundary vertex is adjacent to an interior vertex. \\
{\bf (A)} Assume that for every interior vertex 
$Q\in D$ there exists a discrete subdomain $E_Q\subseteq D$ so that $Q\in Int(E_Q)$, and the following conditions
hold: for every $(f,u_h^b) \in (V_0^h(\overline{E}_Q))^*\times V^h(\partial E_Q)$ with $f$ nonnegative, if
$u_h=\op{S}^{E_Q}_h(f,u_h^b)$, then 
\begin{itemize}
\item[\textnormal{(A1)}] $u_h(Q) \ge \min u_h^b$ and 
\item[\textnormal{(A2)}] if $u_h(Q) = \min u_h^b$, then $u_h$ is constant on $\overline{E}_Q$.
\end{itemize}
Then $\op{S}^{D}_h$ satisfies sDMP-A.\\
{\bf (B)} Assume that for every interior vertex 
$Q\in D$ there exists a discrete subdomain $E_Q\subseteq D$ so that $Q\in Int(E_Q)$, and the following conditions
hold: for every $(f,u_h^b) \in (V_0^h(\overline{E}_Q))^*\times V^h(\partial E_Q)$ with $f$ nonnegative, if
$u_h=\op{S}^{E_Q}_h(f,u_h^b)$, then 
\begin{itemize}
\item[\textnormal{(B1)}] $u_h(Q) \ge -\max (u_h^b)^-$ and 
\item[\textnormal{(B2)}] if $u_h(Q) = -\max (u_h^b)^-$, then $u_h$ is constant on $\overline{E}_Q$.
\end{itemize}
Then $\op{S}^{D}_h$ satisfies sDMP-B.\\
{\bf (R)} Assume that for every interior vertex 
$Q\in D$ there exists a discrete subdomain $E_Q\subseteq D$ so that $Q\in Int(E_Q)$, and the following conditions
hold: for every $(f,u_h^b) \in (V_0^h(\overline{E}_Q))^*\times V^h(\partial E_Q)$ with $f$ nonnegative, if
$u_h=\op{S}^{E_Q}_h(f,u_h^b)$ is nonnegative, then \textnormal{(A1)} and \textnormal{(A2)} hold.
Then $\op{S}^{D}_h$ satisfies sDMP-R.\\
\end{theorem}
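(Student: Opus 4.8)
The plan is to prove part (A) in detail; parts (B) and (R) follow by the same connectivity argument with cosmetic changes. First I would establish the weak inequality \eqref{eq:ldmppos} for $\op{S}^D_h$. Fix a nonnegative $f$ and $u_h^b \in V^h(\partial D)$, set $u_h = \op{S}^D_h(f,u_h^b)$, and let $L = \min_{\partial D} u_h^b$. Let $Q$ be any interior vertex, and let $E_Q$ be the discrete subdomain guaranteed by the hypothesis, with $Q \in \mathrm{Int}(E_Q)$. By consistency \eqref{eq:consistency}, $u_h|_{E_Q} = \op{S}^{E_Q}_h(f_{E_Q}, u_h|_{\partial E_Q})$, and $f_{E_Q}$ is nonnegative. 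Applying (A1) on $E_Q$ gives $u_h(Q) \ge \min_{\partial E_Q} u_h$. Now $\partial E_Q$ is itself a union of faces of $\op{T}_h$ and every vertex on $\partial E_Q$ is either an interior vertex of $D$ or a boundary vertex of $D$; a short induction on vertices — ordering interior vertices so that the minimum over all vertices is propagated — together with the fact that a piecewise-linear function attains its extrema at vertices, shows $u_h(Q) \ge \min(L, \min_{\text{interior vertices}} u_h)$. Taking $Q$ to be an interior vertex where $u_h$ attains its minimum over all interior vertices yields $\min_{\text{interior}} u_h \ge L$, hence $\min_{\overline D} u_h \ge L$ since the global minimum of the piecewise-linear $u_h$ is attained at a vertex and boundary vertices already satisfy $u_h = u_h^b \ge L$. (Alternatively, and more cleanly: let $Q^*$ be an interior vertex minimizing $u_h$ among all vertices of $D$; if its value were below $L$ we would contradict (A1) on $E_{Q^*}$ because $\min_{\partial E_{Q^*}} u_h \ge u_h(Q^*)$ by minimality. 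This avoids any induction.)

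Next I would prove the strong statement. Suppose $u_h = \op{S}^D_h(f,u_h^b)$ attains its global minimum $m = \min_{\overline D} u_h$ at an interior vertex. By the weak part, $m \ge L$. Define $S = \{\,\text{vertices } P \text{ of } \op{T}_h : u_h(P) = m\,\}$; by hypothesis $S$ contains at least one interior vertex. The core claim is: \textbf{if $Q \in S$ is an interior vertex, then every vertex of $E_Q$ lies in $S$}, and in particular every vertex of $\op{T}_h$ adjacent to $Q$ lies in $S$. Indeed, apply (A1) on $E_Q$ to get $u_h(Q) \ge \min_{\partial E_Q} u_h \ge m$ (the last inequality since $m$ is the global minimum); combined with $u_h(Q) = m$ this forces $u_h(Q) = \min_{\partial E_Q} u_h$, and then (A2) forces $u_h$ constant on $\overline{E_Q}$, with value $m$. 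Since $Q \in \mathrm{Int}(E_Q)$, all neighbors of $Q$ in the mesh graph lie in $\overline{E_Q}$, hence in $S$.

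The connectivity argument then finishes part (A). By assumption the graph of interior vertices of $\op{T}_h$ is connected; starting from one interior vertex in $S$ and repeatedly applying the core claim, we conclude that \emph{every} interior vertex lies in $S$ and $u_h \equiv m$ on the union of all $\overline{E_Q}$ over interior vertices $Q$. Since every boundary vertex is adjacent to an interior vertex, the core claim (its ``every vertex of $E_Q$ lies in $S$'' form, which includes neighbors that are boundary vertices) forces every boundary vertex into $S$ as well. As a piecewise-linear function taking the value $m$ at every vertex of $\op{T}_h$, $u_h$ is constant on $\overline D$, which is exactly sDMP-A. For part (B), replace $L = \min u_h^b$ throughout by $-\max_{\partial D}(u_h^b)^-$, use (B1)/(B2) in place of (A1)/(A2), and note that the hypothesis of the strong statement (global \emph{nonpositive} minimum at an interior vertex) guarantees $S$ is nonempty just as before; the connectivity propagation is identical. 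For part (R), one runs the part-(A) argument but only ever applies (A1)/(A2) to solutions that are nonnegative — and since $u_h = \op{S}^D_h(f,u_h^b)$ is assumed nonnegative in the hypothesis of sDMP-R, every restriction $u_h|_{E_Q}$ is nonnegative too, so the local hypotheses are applicable at each step.

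The main obstacle is bookkeeping rather than conceptual: one must be careful that the local hypotheses are stated for the operator $\op{S}^{E_Q}_h$ with an \emph{arbitrary} nonnegative datum and boundary value, so the key to using them is precisely the consistency relation \eqref{eq:consistency} which identifies $u_h|_{E_Q}$ as such a local solution with datum $f_{E_Q}$ (nonnegative) and boundary value $u_h|_{\partial E_Q}$. The second delicate point is the transition from ``$u_h$ is constant $=m$ on each $\overline{E_Q}$'' to ``$u_h$ is constant on $\overline D$'': this needs both connectivity of the interior-vertex graph \emph{and} the adjacency of every boundary vertex to an interior vertex, so that the sets $\overline{E_Q}$ together with the propagation of the value $m$ across shared vertices cover all vertices of $\op{T}_h$; I would spell out that a continuous piecewise-linear function equal to the same constant at all mesh vertices is globally that constant, which closes the proof.
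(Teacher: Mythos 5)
Your proof follows essentially the same route as the paper's: localize via the consistency relation \eqref{eq:consistency}, apply (A1)--(A2) at an interior vertex realizing the global minimum to obtain constancy on $\overline{E}_Q$, and propagate the minimal value along the connected graph of interior vertices, finishing with the boundary vertices through their adjacency to interior ones. The core claim and the connectivity step are correct, and (B) and (R) do go through as you indicate; for (B) the only non-cosmetic point is to verify that a global nonpositive minimum value $m$ at $Q$ actually equals $-\max (u_h|_{\partial E_Q})^-$ (the reverse inequality $-\max(u_h|_{\partial E_Q})^-\ge m$ holds because $m\le 0$ is the global minimum), which is the step the paper spells out explicitly.

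The one step that does not work as written is your ``cleaner'' standalone derivation of the weak inequality \eqref{eq:ldmppos}. If $Q^*$ minimizes $u_h$ over all vertices and $u_h(Q^*)<L$, then (A1) combined with $\min_{\partial E_{Q^*}} u_h \ge u_h(Q^*)$ yields the \emph{equality} $u_h(Q^*)=\min_{\partial E_{Q^*}}u_h$, not a contradiction: $\partial E_{Q^*}$ need not meet $\partial D$, so $L$ never enters, and the preceding ``induction'' version is likewise circular. This is harmless, because \eqref{eq:ldmppos} is an immediate corollary of the strong statement you do prove: if the global minimum $m$ were below $L$ it would be attained at an interior vertex, whence $u_h\equiv m$ everywhere and hence $m=L$, a contradiction. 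The paper organizes the argument exactly this way, splitting on whether the set of minimizing vertices meets the interior, rather than proving the weak inequality first.
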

%%% proof
\begin{proof}
Let $(f,u_h^b) \in (V_0^h(\overline{D}))^*\times V^h(\partial D)$ with $f$ nonnegative, and $u_h=\op{S}^{D}_h(f,u_h^b)$. 
Denote by $m=\min_{i=1}^N u_h(P_i)$; this is the minimum among the values at the
vertices, but for piecewise linear functions it coincides with the global minimum of $u_h$ on $\overline{D}$.
Consider the set of vertices
$$\op{C}_m=\{Q\ \textnormal{vertex\ in\ }\ \op{T}_h\: : \: u_h(Q)=m\}.$$ 
The arguments vary only slightly between {\bf (A)}, {\bf (B)}, and {\color{black}{\bf (R)}}.

For {\bf (A)}, if $\op{C}_m \subseteq \partial D$, then $u_h$ attains
its minimum {\bf only} on $\partial D$, and~\eqref{eq:ldmppos} holds in a strict sense. Otherwise, there exists $Q\in\op{C}_m \cap Int(D)$. 
It remains to show that $u_h$ is constant, then the conclusion follows.
Let $E_{Q}$ be as in the hypothesis. Since the minimum of $u_h$ on $\overline{E}_{Q}$ is achieved at $Q$ (because it is 
a global minimum of $u_h$), and by~\eqref{eq:consistency}
{\color{black}
\begin{eqnarray*}
u_h|_{E_Q} = \op{S}^{E_Q}_h(f_{E_Q},u_h|_{\partial E_Q}),
\end{eqnarray*}
}
it follows from  property (A2)
%Definition~\ref{def:sdmpL}(i) 
that $u_h$ is constant on $\overline{E}_Q$.
Now let $\widehat{Q}\in Int(D)$ be an arbitrary interior vertex, and let $Q=Q_0, Q_1, \dots, Q_r=\widehat{Q}$ be a 
sequence of adjacent interior vertices connecting $Q$ to $\widehat{Q}$, that is, 
$Q_{i-1}$ is adjacent to $Q_{i}$ for $i=1, \dots, r$. Since $Q_i\in Int(E_{Q_i})$
and $Q_{i-1}$ is adjacent to $Q_i$, it follows that $\{Q_{i-1},Q_i\} \subset E_{Q_{i-1}} \cap  E_{Q_{i}}$. Assume $u_h(Q_k)=m$
for some $0\le k \le r-1$. Since $m$ is the global minimum, the same argument used for $k=0$ {\color{black}shows} that $u_h$ is constant on 
$\overline{E}_{Q_{k}}$. Hence $u_h(Q_{k+1})=m$. By induction over $k$ we get $u_h(Q_r)=u_h(Q_0)=m$. So $u_h(\widehat{Q})=m$ for all 
interior vertices $\widehat{Q}$.
If $R$ is a vertex lying on $\partial D$, consider a vertex $\widehat{Q}\in Int(D)$ that it is adjacent to $R$. 
Since $R\in E_{\widehat{Q}}$, it follows that $u_h(R)=m$ as well, showing that $u_h$ is constant.

For {\bf (B)}, if $\op{C}_m \subseteq \partial D$, then $u_h$ attains its  minimum only on $\partial D$.
Hence, if $P$ is an interior vertex, then
\begin{align*}
u_h(P) > m = \min u_h^b = \min \left((u_h^b)^+-(u_h^b)^- \right) \ge \min -(u_h^b)^- = -\max (u_h^b)^-,
\end{align*}
showing a strict inequality holds in~\eqref{eq:sldmppos}. Independently of the condition $\op{C}_m \subseteq \partial D$,
if $m>0$, then
\begin{align*}
u_h(P) \ge m >0 \ge \min -(u_h^b)^- = -\max (u_h^b)^-,
\end{align*}
which shows, again, that~\eqref{eq:sldmppos} holds  strictly. This leaves us with the case when $\op{C}_m \cap Int(D) \ne \varnothing$
and $m\le 0$. Let $Q\in\op{C}_m \cap Int(D)$. 
As in case {\bf (A)}, we will show that $u_h$ is constant on the set $\overline{E}_{Q}$ from the hypothesis, and the rest 
of the argument follows the same path as in the case {\bf (A)}. 
The focus is on $u_h|_{E_Q}$, to which we apply the conditions (B1) and (B2). By (B1) we have 
\begin{align}
\label{eq:m1ineq}
0\ge m = u_h(Q) \ge -\max (u_h^b)^- = m_1 = -(u_h^b)^-(Q_1), 
\end{align}
for some $Q_1\in \partial E_Q$. If $m_1=0$, then by~\eqref{eq:m1ineq} we have $m=m_1=0$; hence, Condition~(B2) implies
$u_h|_{E_Q}$ is constant. If $m_1<0$, then $(u_h^b)^-(Q_1) = -m_1 > 0$, which implies (by the definition of $(u_h^b)^-$)
that $(u_h^b)^-(Q_1) = -u_h^b(Q_1)$, showing  that  $u_h^b(Q_1) = m_1$. Cf.~\eqref{eq:m1ineq}
$$m = u_h(Q) \ge u_h(Q_1) = u_h^b(Q_1)= m_1.$$ 
Since $m$  is the global minimum of $u_h$, it follows that $m=m_1$. Now (B2) implies that $u_h$ is constant on $\overline{E}_Q$.\\
The proof for {\bf (R)} is identical to that of {\bf (A)}, except we assume from the beginning that $m\ge 0$.
\end{proof}
%%%%%%
In practice we verify a stronger condition, in the sense that we may be able to cover $Int(D)$ with the interiors of 
sets on which sDMP-A, sDMP-B, or  sDMP-R, holds, as shown in the following result.
\begin{corollary}
\label{cor:DMP}
Let $(\op{S}^E_h)_{E\subseteq D}$ be a {consistent family of solution operators},
and the triangulation $\op{T}_h$ be so that the graph of the interior vertices {\color{black}is connected}, 
and that every boundary vertex is adjacent to an interior vertex. Let $X$ stand for any of the symbols
$A$, $B$, or $R$. Assume that 
there exists a finite set of discrete subdomains $(E_{i})_{i\in I}$ so that 
\begin{align}
\label{eq:corDMP} 
Int(D) = \cup_{i\in I} Int(E_{i})
\end{align}
and $\op{S}^{E_i}_h$ satisfies sDMP-X for all $i\in I$. Then $\op{S}^{D}_h$ satisfies sDMP-X.
\end{corollary}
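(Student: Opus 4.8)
The plan is to deduce Corollary~\ref{cor:DMP} directly from Theorem~\ref{thm:DMP} by exhibiting, for each interior vertex $Q$ of $\op{T}_h$, an appropriate discrete subdomain $E_Q$ from among the given family $(E_i)_{i\in I}$, and then checking that the hypotheses (X1)--(X2) of the relevant part of Theorem~\ref{thm:DMP} are met. The key observation is that~\eqref{eq:corDMP} guarantees that every interior vertex $Q\in Int(D)$ lies in $Int(E_i)$ for at least one index $i=i(Q)\in I$, since $Q\in Int(D)=\cup_{i\in I}Int(E_i)$. So I would simply set $E_Q := E_{i(Q)}$; this is a discrete subdomain with $Q\in Int(E_Q)$, exactly as required in the hypothesis of Theorem~\ref{thm:DMP}.

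It then remains to verify that the local conditions hold on each $E_Q$. Fix an interior vertex $Q$ and the associated index $i=i(Q)$. By assumption $\op{S}^{E_i}_h$ satisfies sDMP-X. Consider the case $X=A$: take any $(f,u_h^b)\in(V_0^h(\overline{E}_Q))^*\times V^h(\partial E_Q)$ with $f$ nonnegative, and set $u_h=\op{S}^{E_Q}_h(f,u_h^b)$. Since $\op{S}^{E_Q}_h=\op{S}^{E_i}_h$ satisfies sDMP-A, inequality~\eqref{eq:ldmppos} (with $E_Q$ in place of $D$) gives $\min_{\overline{E}_Q}u_h\ge\min_{\partial E_Q}u_h^b=\min u_h^b$; in particular $u_h(Q)\ge\min u_h^b$, which is (A1). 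For (A2), suppose $u_h(Q)=\min u_h^b$. Then $u_h$ attains its global minimum over $\overline{E}_Q$ at the interior vertex $Q\in Int(E_Q)$; by Definition~\ref{def:sdmpL}(ii) applied to $\op{S}^{E_Q}_h$, $u_h$ is constant on $\overline{E}_Q$, which is (A2). Hence part~{\bf (A)} of Theorem~\ref{thm:DMP} applies and $\op{S}^D_h$ satisfies sDMP-A. The case $X=B$ is analogous, using~\eqref{eq:sldmppos} to obtain (B1) and Definition~\ref{def:sdmpL}(iv) — noting that when $u_h(Q)=-\max(u_h^b)^-$ the value $u_h(Q)$ is a global minimum over $\overline{E}_Q$, and it is nonpositive since $-\max(u_h^b)^-\le 0$ — to obtain (B2); then part~{\bf (B)} of Theorem~\ref{thm:DMP} gives sDMP-B. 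For $X=R$ one repeats the $X=A$ argument under the additional standing assumption that $u_h$ is nonnegative, invoking Definition~\ref{def:sdmpL}(v) and part~{\bf (R)} of Theorem~\ref{thm:DMP}.

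The argument is essentially bookkeeping: the real content is already in Theorem~\ref{thm:DMP}, and the corollary only reorganizes a covering by subdomains-with-sDMP-X into the pointwise-per-vertex form that the theorem requires. The one point that deserves care — and is the closest thing to an obstacle — is matching the two notions of "minimum at an interior point": sDMP-X on $E_i$ speaks of a global minimum over $\overline{E_i}$ attained at an interior vertex of $E_i$, whereas conditions (X1)--(X2) single out the specific vertex $Q$. These reconcile because $Q\in Int(E_Q)$ and, in each case, the equality hypothesis in (X2)/(B2) forces $u_h(Q)$ to equal $\min_{\overline{E}_Q}u_h$ (using the weak inequality from (X1) together with the fact that $\min u_h^b$ or $-\max(u_h^b)^-$ already bounds $\min_{\overline{E}_Q}u_h$ from below), so $Q$ is indeed a point where the global minimum over $\overline{E}_Q$ is attained. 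Finiteness of $I$ is not actually needed for this deduction; it is presumably retained only for the practical relevance of the statement. No new estimates or constructions are required beyond this.
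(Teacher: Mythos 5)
Your proposal is correct and follows essentially the same route as the paper: pick, for each interior vertex $Q$, a covering subdomain $E_{i(Q)}$ from~\eqref{eq:corDMP} and check that sDMP-X on $E_{i(Q)}$ implies the local conditions (A1)--(A2) (resp.\ (B1)--(B2), or the restricted version) of Theorem~\ref{thm:DMP}. The paper's proof states this in one sentence; your write-up simply supplies the (correct) verification that the equality hypothesis in (A2)/(B2) forces $Q$ to be a global minimizer over $\overline{E}_Q$, so the strong part of Definition~\ref{def:sdmpL} applies.
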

\begin{proof}
This follows easily from Theorem~\ref{thm:DMP}, since for each interior vertex $Q$ there exists $i\in I$ so that
$Q\in Int(E_i)$, and $\op{S}^{E_i}_h$ satisfies sDMP-X. Conditions (A1)--(A2) (or (B1)--(B2), for $X=B$) are clearly verified,
since they are weaker than those of sDMP-A and sDMP-R (or SDMP-B).
\end{proof}

\subsection{Matrix form of sDMP-A and sDMP-B for linear elliptic equations}
\label{ssec:matrixform}
In this section we translate sDMP-A and sDMP-B in matrix form for the case of linear elliptic PDEs, 
namely when $c$ has the form~\eqref{eq:Creaclinear}. This form will facilitate the application of Corollary~\ref{cor:DMP}
to specific examples, as shown in Section~\ref{sec:linelliptic}. We have separate results for the case $\tilde{c}\equiv 0$, where sDMP-A is relevant,
and $\tilde{c}\ge 0$, where we look for sDMP-B to be satisfied, as in the continuous case.

For the purpose of this section we regard vectors as column  matrices. Given  $\mat{B}\in \R^{m\times n}$, we 
say that $\mat{B}$ is nonnegative and write $\mat{B}\ge \mat{0}$ if $\mat{B}_{ij}\ge 0$ for all $i,j$; we call 
$\mat{B}$ positive and write $\mat{B} > \mat{0}$ if $\mat{B}_{ij} > 0$ for all $i,j$. We also denote $\mat{B}\gneqq \mat{0}$
if $(\mat{B}\ge \mat{0}$ and $\mat{B}\ne \mat{0})$. Note that $\mat{B} > \mat{0}$ if and only if $\mat{B}\mat{x} > \mat{0}$ for every
vector that satisfies $\mat{x} \gneqq \mat{0}$. We write $\mat{B}\ge (>) \mat{C}$, if $\mat{B}-\mat{C} \ge (>) \mat{0}$.
If $\alpha\subseteq \{1,\dots, m\}$,  and $\beta\subseteq \{1,\dots, n\}$, let 
$\mat{B}_{\alpha \beta}  = (\mat{B}_{i j})_{i\in \alpha, j\in \beta}$; if $\mat{x}\in \R^n$, then 
$\mat{x}_{\beta}  = (\mat{x}_{j})_{j\in \beta} \in \R^{|\beta|}$, where $|\beta|$ denotes the cardinality of $\beta$.
We also define the column vector $\mat{1}=\lbrack 1,\dots,1\rbrack^T$.

We return to the matrix formulation of~\eqref{eq:weakelldisc} on the domain $D$ with $c$ as in~\eqref{eq:Creaclinear}.
We define the stiffness, mass, and reaction matrices $\mat{A}, \mat{M}, \mat{C}\in \R^{N\times N}$
associated with~\eqref{eq:weakelldisc} by 
\begin{equation}
\label{eq:stiffmassreac}
\mat{A}_{i j}=a_D(\varphi_j,\varphi_i),\ \  \mat{M}_{i j}=\linnprd{\varphi_j}{\varphi_i}_D,\ \ \mathrm{ and}\ \ 
\mat{C}_{i j}=\linnprd{\tilde{c}\varphi_j}{\varphi_i}_D,\ \ \mathrm{respectively}.
\end{equation}
If $\tilde{c}$ is constant, then
$\mat{C} = \tilde{c}\mat{M}$. Let $\mat{F}\in \R^N$ be given by $\mat{F}_i = \innprd{f}{\varphi_i}$. 
% Lemma c=0
\begin{lemma} 
\label{lma:matformsdmpA}
Assume $\tilde{c}\equiv 0$, and let $\alpha$ and $\beta$ be the set of interior, respectively boundary nodes of the triangulation.
Then $\op{S}^D_h$ satisfies sDMP-A if and only if
\begin{equation}
\label{eq:matformsdmpA}
\mat{A}_{\alpha \alpha}^{-1} > \mat{0}\ \ \mathrm{and}\ \ \mat{A}_{\alpha \alpha}^{-1}\mat{A}_{\alpha \beta} <\mat{0}.
\end{equation}
\end{lemma}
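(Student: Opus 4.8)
The plan is to translate the analytic statement of sDMP-A directly into the algebraic structure of the linear system. Since $\tilde c\equiv 0$, the discrete problem on $D$ is the linear system: given nodal boundary data $\mat{g}=(u_h^b(P_j))_{j\in\beta}$ and a load $\mat{F}$ with $\mat{F}_\alpha\ge\mat0$ (this is exactly the meaning of $f$ being nonnegative), the interior nodal values $\mat{x}_\alpha=(u_h(P_i))_{i\in\alpha}$ solve
\begin{equation}
\label{eq:matformproof}
\mat{A}_{\alpha\alpha}\mat{x}_\alpha + \mat{A}_{\alpha\beta}\mat{g} = \mat{F}_\alpha,
\qquad\text{i.e.}\qquad
\mat{x}_\alpha = \mat{A}_{\alpha\alpha}^{-1}\mat{F}_\alpha - \mat{A}_{\alpha\alpha}^{-1}\mat{A}_{\alpha\beta}\mat{g}.
\end{equation}
(Invertibility of $\mat{A}_{\alpha\alpha}$ is standard coercivity; if one wants to avoid presupposing it, note that $\op{S}^D_h$ is assumed to be a well-defined operator, which forces $\mat{A}_{\alpha\alpha}$ nonsingular.) So the whole lemma reduces to: the affine map in \eqref{eq:matformproof} sends every pair $(\mat{F}_\alpha\ge\mat0,\ \mat{g})$ to a vector $\mat{x}_\alpha$ with $\min_i (\mat{x}_\alpha)_i \ge \min_j \mat{g}_j$, with equality at an interior node forcing $u_h$ constant, precisely when \eqref{eq:matformsdmpA} holds.

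For the ``if'' direction I would decompose the boundary data as $\mat{g} = \mat{g} - (\min\mat{g})\mat1 + (\min\mat{g})\mat1$, and exploit that $\mat{A}_{\alpha\alpha}\mat1_\alpha + \mat{A}_{\alpha\beta}\mat1_\beta = \mat0$ (the constant function is discretely harmonic, i.e. row sums of the full stiffness matrix vanish), so $\mat{A}_{\alpha\alpha}^{-1}\mat{A}_{\alpha\beta}\mat1 = -\mat1$. Writing $\mat{h} = \mat{g} - (\min\mat{g})\mat1 \ge \mat0$, I get
\begin{equation}
\mat{x}_\alpha = \mat{A}_{\alpha\alpha}^{-1}\mat{F}_\alpha \;-\; \mat{A}_{\alpha\alpha}^{-1}\mat{A}_{\alpha\beta}\mat{h} \;+\; (\min\mat{g})\,\mat1_\alpha,
\end{equation}
and since $\mat{A}_{\alpha\alpha}^{-1}\ge\mat0$, $\mat{F}_\alpha\ge\mat0$, $-\mat{A}_{\alpha\alpha}^{-1}\mat{A}_{\alpha\beta}\ge\mat0$, $\mat{h}\ge\mat0$, the first two terms are $\ge\mat0$ componentwise, giving $\mat{x}_\alpha\ge(\min\mat{g})\mat1_\alpha$, which is the weak inequality \eqref{eq:ldmppos}. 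For the strong part, suppose $(\mat{x}_\alpha)_i = \min\mat{g}$ for some $i\in\alpha$; then the $i$-th component of $\mat{A}_{\alpha\alpha}^{-1}\mat{F}_\alpha + (-\mat{A}_{\alpha\alpha}^{-1}\mat{A}_{\alpha\beta})\mat{h}$ is zero. Using the strict positivity $\mat{A}_{\alpha\alpha}^{-1}>\mat0$ and $-\mat{A}_{\alpha\alpha}^{-1}\mat{A}_{\alpha\beta}>\mat0$: if $\mat{F}_\alpha\gneqq\mat0$ then already $(\mat{A}_{\alpha\alpha}^{-1}\mat{F}_\alpha)_i>0$, contradiction, so $\mat{F}_\alpha=\mat0$; similarly if $\mat{h}\gneqq\mat0$ then the second term's $i$-th entry is $>0$, contradiction, so $\mat{h}=\mat0$, i.e. $\mat{g}=(\min\mat{g})\mat1$. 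Hence $\mat{x}_\alpha = (\min\mat{g})\mat1_\alpha$ too, so $u_h$ is the constant $\min\mat{g}$ everywhere.

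For the ``only if'' direction I would produce, from sDMP-A, the sign conditions by testing on well-chosen data. To force $\mat{A}_{\alpha\alpha}^{-1}\ge\mat0$: take $\mat{g}=\mat0$ and $\mat{F}_\alpha=\mat{e}_k$ (the $k$-th unit vector, which is a legitimate nonnegative load); then $\mat{x}_\alpha = \mat{A}_{\alpha\alpha}^{-1}\mat{e}_k = k$-th column of $\mat{A}_{\alpha\alpha}^{-1}$, and the weak DMP gives $\mat{x}_\alpha\ge\min\mat0 = 0$, so that column is $\ge\mat0$; ranging over $k$ gives $\mat{A}_{\alpha\alpha}^{-1}\ge\mat0$. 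To upgrade to strict: with $\mat{F}_\alpha=\mat{e}_k$ and $\mat{g}=\mat0$, if some entry $(\mat{A}_{\alpha\alpha}^{-1})_{ik}=0$ then $(\mat{x}_\alpha)_i = 0 = \min\mat{g}$ at the interior node $i$, so by the strong part $u_h$ is constant, forcing $\mat{A}_{\alpha\alpha}^{-1}\mat{e}_k = \mat0$, impossible since $\mat{A}_{\alpha\alpha}^{-1}$ is invertible; hence $\mat{A}_{\alpha\alpha}^{-1}>\mat0$. For the second condition, take $\mat{F}_\alpha=\mat0$ and $\mat{g}=\mat1 - \mat{e}_\ell$ (entries $1$ except a $0$ in position $\ell\in\beta$), a nonnegative vector with $\min\mat{g}=0$; then $\mat{x}_\alpha = -\mat{A}_{\alpha\alpha}^{-1}\mat{A}_{\alpha\beta}(\mat1-\mat{e}_\ell) = \mat1_\alpha + (\mat{A}_{\alpha\alpha}^{-1}\mat{A}_{\alpha\beta})\mat{e}_\ell$, i.e. $\mat{x}_\alpha = \mat1_\alpha + \ell$-th column of $\mat{A}_{\alpha\alpha}^{-1}\mat{A}_{\alpha\beta}$, and the weak DMP $\mat{x}_\alpha\ge 0$ gives that column $\ge -\mat1$, which is far too weak; so instead I scale: for $t>0$ test $\mat{g}=t(\mat1-\mat{e}_\ell)$, giving $\mat{x}_\alpha = t\mat1_\alpha + t\,(\mat{A}_{\alpha\alpha}^{-1}\mat{A}_{\alpha\beta})\mat{e}_\ell$; if some entry $(\mat{A}_{\alpha\alpha}^{-1}\mat{A}_{\alpha\beta})_{i\ell}>0$, choose $\mat{g} = \mat{e}_\ell$ (nonnegative, $\min = 0$) so $\mat{x}_\alpha = -(\mat{A}_{\alpha\alpha}^{-1}\mat{A}_{\alpha\beta})\mat{e}_\ell$ has $i$-th entry $<0 = \min\mat{g}$, violating \eqref{eq:ldmppos}; hence $\mat{A}_{\alpha\alpha}^{-1}\mat{A}_{\alpha\beta}\le\mat0$. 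Strictness: with $\mat{g}=\mat{e}_\ell$, if $(\mat{A}_{\alpha\alpha}^{-1}\mat{A}_{\alpha\beta})_{i\ell}=0$ for some $i\in\alpha$, then $(\mat{x}_\alpha)_i = 0 = \min\mat{g}$ at an interior node, so $u_h$ is forced constant $=0$, but then $u_h(P_\ell)=0\ne 1 = \mat{e}_\ell(P_\ell)$, contradiction; hence $\mat{A}_{\alpha\alpha}^{-1}\mat{A}_{\alpha\beta}<\mat0$.

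The main obstacle is bookkeeping the ``equality case'' correctly: one must make sure that the test data chosen in the ``only if'' direction genuinely realizes a \emph{global} minimum at an \emph{interior} vertex (not merely at a boundary vertex), since sDMP-A only concludes constancy under that hypothesis — this is exactly why the loads $\mat{e}_k\ge\mat0$ and boundary data $\mat{e}_\ell$ with $\min = 0$ are the right choices, so that $0$ is simultaneously the global minimum value and is attained at the interior node in question. A secondary subtlety is the identity $\mat{A}_{\alpha\alpha}\mat1_\alpha + \mat{A}_{\alpha\beta}\mat1_\beta=\mat0$, i.e. that constants are discretely harmonic; this uses only that $a_D(\cdot,\cdot)$ has the constant in its kernel (true because $a_D$ involves only gradients), and it is the one place where the specific bilinear form \eqref{eq:bilform} — rather than a fully abstract operator — enters.
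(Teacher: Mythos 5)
Your proposal is correct and follows essentially the same route as the paper's proof: the same use of the row-sum identity $\mat{A}_{\alpha\alpha}\mat1_\alpha+\mat{A}_{\alpha\beta}\mat1_\beta=\mat0$ to shift by $\min\mat{g}$ in the ``if'' direction, the same positivity/strict-positivity dichotomy for the equality case, and the same choice of test data ($\mat{F}_\alpha=\mat{e}_k$ with zero boundary data, and $\mat{g}=\mat{e}_\ell$ with zero load) in the ``only if'' direction. The brief detour through $\mat{g}=\mat1-\mat{e}_\ell$ is harmless since you discard it, and your remarks on invertibility and on where the specific bilinear form enters are accurate.
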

\begin{proof}
We restrict our attention to the case when $f$ in~\eqref{eq:weakelldisc} is nonnegative.
The matrix form of~\eqref{eq:weakelldisc} with $c\equiv 0$ is
\begin{equation}
\label{eq:matforFEczero}
\mat{A}_{\alpha \alpha} \mat{u}_{\alpha} +  \mat{A}_{\alpha \beta} \mat{u}_{\beta}  = \mat{F}_{\alpha}.
\end{equation}
Note that $\mat{A}\mat{1} = \mat{0}$, which implies
\begin{equation}
\label{eq:stiffnessone}
\mat{A}_{\alpha \alpha} \mat{1}_{\alpha} +  \mat{A}_{\alpha \beta} \mat{1}_{\beta}  = \mat{0}_{\alpha}.
\end{equation}
Hence,
\begin{eqnarray}
\label{eq:1alpha}
\mat{1}_{\alpha} =   -\mat{A}_{\alpha \alpha}^{-1}\mat{A}_{\alpha \beta} \mat{1}_{\beta}.
\end{eqnarray}
Assume~\eqref{eq:matformsdmpA} holds. Let {\color{black}$u_h^b \in V^h(\partial D)$} be arbitrary, with vector representation
$\mat{u}_{\beta}\in \R^{|\beta|}$, and $u_h = \op{S}^D_h(f,u_h^b)$, with vector representation (in the nodal basis)
$\mat{u}$. Let $\underline{u} = \min u_h^b = \min_i \{\mat{u}_i\;:\;i\in \beta\}$.
Cf.~\eqref{eq:matforFEczero} and  \eqref{eq:1alpha} we have
\begin{eqnarray}
\nonumber
\mat{u}_{\alpha}-\underline{u}\mat{1}_{\alpha}& = & 
\mat{A}_{\alpha \alpha}^{-1}\left( \mat{F}_{\alpha} - \mat{A}_{\alpha \beta} \mat{u}_{\beta} \right)
+\underline{u} \mat{A}_{\alpha \alpha}^{-1}\mat{A}_{\alpha \beta} \mat{1}_{\beta}\\
\label{eq:matineq1}
&= & \mat{A}_{\alpha \alpha}^{-1}\mat{F}_{\alpha} - 
\mat{A}_{\alpha \alpha}^{-1}\mat{A}_{\alpha \beta} \left(\mat{u}_{\beta} 
-\underline{u}\mat{1}_{\beta}\right) \ge \mat{0},
\end{eqnarray}
where we used~\eqref{eq:matformsdmpA} together with $\mat{u}_{\beta} -\underline{u}\mat{1}_{\beta}\ge \mat{0}$ and  
$\mat{F}_{\alpha} \ge \mat{0}$. Therefore,
$$\min_D u_h \ge \underline{u} = \min_{\partial D}u_h^b.$$
Moreover, if $\mat{u}_{\beta} -\underline{u}\mat{1}_{\beta} \gneqq \mat{0}$ or $\mat{F}_{\alpha}\gneqq \mat{0}$, then  
$-\mat{A}_{\alpha \alpha}^{-1}\mat{A}_{\alpha \beta} <\mat{0}$ together with $\mat{A}_{\alpha \alpha}^{-1}> \mat{0}$
and~\eqref{eq:matineq1} imply 
that $\mat{u}_{\alpha}-\underline{u}\mat{1}_{\alpha} > \mat{0}$, showing that $u_h$ cannot attain the global minimum
$\underline{u}$ in the interior of $D$. So if $u_h$ attains the value $\underline{u}$ in the interior of $D$,
we have we must have $\mat{u}_{\beta} -\underline{u}\mat{1}_{\beta} = \mat{0}$ and $\mat{F}_{\alpha} = \mat{0}$. 
Now~\eqref{eq:matineq1} implies $\mat{u}_{\alpha}-\underline{u}\mat{1}_{\alpha} = \mat{0}$, proving that $u_h$ is constant.
This shows that $\op{S}^D_h$ satisfies sDMP-A.

To prove the reverse implication, assume $\op{S}^D_h$ satisfies sDMP-A. For an internal node $i\in \alpha$ define the source
$f^{(i)}$ so that {\color{black}$\innprd{f^{(i)}}{\varphi_j} = \delta_{ij}$}, i.e., $f^{(i)}$ is represented by the basis element
$\mat{e}_i$; let $u_h^b\equiv  0$. Due to sDMP-A,
the vector $\mat{u}^{(i)}$ representing the solution {\color{black}$u_h^{(i)} = \op{S}^D_h(f^{(i)}, 0)$} satisfies 
$$\min \mat{u}^{(i)} \ge \mat{0}.$$
In addition, if $\mat{u}^{(i)}$ has any zero entry, then $u_h^{(i)}$ has a global minimum in the interior of $D$, so by
sDMP-A, $u_h^{(i)}\equiv 0$, which, in turn, would imply $f^{(i)}\equiv 0$. 
This shows $\mat{u}^{(i)} > \mat{0}$, meaning, the column corresponding to $i$ in 
$\mat{A}_{\alpha \alpha}^{-1}$ is positive. Therefore, $\mat{A}_{\alpha \alpha}^{-1} > \mat{0}$. A similar 
argument, this time taking $f\equiv 0$ and boundary vectors of the form $\mat{u}_{\beta} = \mat{e}_j$ with $j\in \beta$
shows that $-\mat{A}_{\alpha \alpha}^{-1}\mat{A}_{\alpha \beta} > \mat{0}$.
\end{proof}
{\color{black}
We remark that the vector $\mat{u}^{(i)}$ in the proof of the reverse implication  is the vector representation of 
discrete Green's function with Dirac impulse forcing $f(x) = \delta(x-P_i)$. Hence, the matrix $\mat{A}_{\alpha \alpha}^{-1}$
represents the discrete Green's function, and the first of the conditions in~\eqref{eq:matformsdmpA} states that the 
discrete Green's function be positive. Oftentimes this is regarded as being equivalent to the DMP. However, we note that this form of the DMP only 
applies to zero--Dirichlet boundary conditions.}

% Lemma: c \ge zero
\begin{lemma} 
\label{lma:matformsdmpB}
Assume $\tilde{c}\ge  0$, and
let $\alpha$ and $\beta$ be the set of interior, respectively boundary nodes of the triangulation.
Then $\op{S}^D_h$ satisfies sDMP-B if and only if
\begin{equation}
\label{eq:matformsdmpB}
(\mat{A}_{\alpha \alpha}+\mat{C}_{\alpha \alpha})^{-1} > \mat{0}\ \ \mathrm{and}\ \ 
(\mat{A}_{\alpha \alpha}+\mat{C}_{\alpha \alpha})^{-1}(\mat{A}_{\alpha \beta} + \mat{C}_{\alpha \beta})<\mat{0}.
\end{equation}
\end{lemma}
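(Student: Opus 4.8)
The plan is to transcribe the proof of Lemma~\ref{lma:matformsdmpA}, systematically replacing the stiffness matrix $\mat{A}$ by $\mat{A}+\mat{C}$ and keeping careful track of the one extra term that appears because $(\mat{A}+\mat{C})\mat{1}\neq\mat{0}$. First I would record the matrix form of~\eqref{eq:weakelldisc} on $D$ with $c$ as in~\eqref{eq:Creaclinear},
\begin{equation*}
(\mat{A}_{\alpha\alpha}+\mat{C}_{\alpha\alpha})\mat{u}_\alpha+(\mat{A}_{\alpha\beta}+\mat{C}_{\alpha\beta})\mat{u}_\beta=\mat{F}_\alpha,
\end{equation*}
together with two elementary observations: since $\tilde c\ge 0$ and $\varphi_i\varphi_j\ge 0$ we have $\mat{C}\ge\mat{0}$, hence $\mat{C}\mat{1}\ge\mat{0}$; and $\mat{A}\mat{1}=\mat{0}$ still holds, so the analogue of~\eqref{eq:stiffnessone} now reads $(\mat{A}_{\alpha\alpha}+\mat{C}_{\alpha\alpha})\mat{1}_\alpha+(\mat{A}_{\alpha\beta}+\mat{C}_{\alpha\beta})\mat{1}_\beta=(\mat{C}\mat{1})_\alpha\ge\mat{0}_\alpha$ — a nonnegative right-hand side rather than zero, which is exactly why sDMP-B, not sDMP-A, is the natural conclusion.

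For the forward implication, assume~\eqref{eq:matformsdmpB}, fix $(f,u_h^b)$ with $f$ nonnegative, set $u_h=\op{S}^D_h(f,u_h^b)$ with nodal vector $\mat{u}$, and put $L=-\max_{\partial D}(u_h^b)^-=\min(0,\min_{\partial D}u_h^b)\le 0$. Subtracting $L$ times the $\mat{1}$-identity from the equation and multiplying by $(\mat{A}_{\alpha\alpha}+\mat{C}_{\alpha\alpha})^{-1}$ gives
\begin{equation*}
\mat{u}_\alpha-L\mat{1}_\alpha=(\mat{A}_{\alpha\alpha}+\mat{C}_{\alpha\alpha})^{-1}\mat{F}_\alpha-(\mat{A}_{\alpha\alpha}+\mat{C}_{\alpha\alpha})^{-1}(\mat{A}_{\alpha\beta}+\mat{C}_{\alpha\beta})(\mat{u}_\beta-L\mat{1}_\beta)-L(\mat{A}_{\alpha\alpha}+\mat{C}_{\alpha\alpha})^{-1}(\mat{C}\mat{1})_\alpha.
\end{equation*}
All three terms are $\ge\mat{0}$: the first since $\mat{F}_\alpha\ge\mat{0}$ and the inverse is positive; the second since $-(\mat{A}_{\alpha\alpha}+\mat{C}_{\alpha\alpha})^{-1}(\mat{A}_{\alpha\beta}+\mat{C}_{\alpha\beta})>\mat{0}$ and $\mat{u}_\beta-L\mat{1}_\beta\ge\mat{0}$ (because $L\le\min_{\partial D}u_h^b$); the third since $-L\ge 0$, $(\mat{C}\mat{1})_\alpha\ge\mat{0}$ and the inverse is positive. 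Thus $\min_{\overline D}u_h\ge L$, which is~\eqref{eq:sldmppos}. For the strong part, suppose $u_h$ attains a nonpositive global minimum $m\le 0$ at an interior vertex; since $m\le u_h^b$ on $\partial D$ and $m\le 0$ we get $m\le L$, hence $m=L$ and $\mat{u}_\alpha-m\mat{1}_\alpha$ has a zero coordinate at that vertex. As the three nonnegative terms above sum to a vector with a zero in that coordinate, each of them vanishes there; using that every row of $(\mat{A}_{\alpha\alpha}+\mat{C}_{\alpha\alpha})^{-1}$ is positive and that a positive matrix maps $\gneqq\mat{0}$ vectors to $>\mat{0}$ vectors, I would deduce $\mat{F}_\alpha=\mat{0}$, $\mat{u}_\beta=m\mat{1}_\beta$, and (when $m<0$) $(\mat{C}\mat{1})_\alpha=\mat{0}$; substituting back gives $\mat{u}_\alpha=m\mat{1}_\alpha$, so $u_h\equiv m$ is constant.

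For the converse, assume $\op{S}^D_h$ satisfies sDMP-B and argue as in Lemma~\ref{lma:matformsdmpA}. Taking $f^{(i)}$ represented by $\mat{e}_i$ with $i\in\alpha$ and $u_h^b\equiv 0$ (so $(u_h^b)^-=0$, and wDMP-B forces $\min u_h\ge 0$), the solution vector equals the $i$-th column of $(\mat{A}_{\alpha\alpha}+\mat{C}_{\alpha\alpha})^{-1}$ on $\alpha$ and $\mat{0}$ on $\beta$; a zero interior entry would make $0$ a nonpositive global minimum attained at an interior vertex, so sDMP-B would force $u_h\equiv 0$, hence $f^{(i)}\equiv 0$, a contradiction — giving $(\mat{A}_{\alpha\alpha}+\mat{C}_{\alpha\alpha})^{-1}>\mat{0}$. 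Taking instead $f\equiv 0$ and $u_h^b$ represented by $\mat{e}_j$ with $j\in\beta$ (again nonnegative, so $\min u_h\ge 0$) yields by the same reasoning that the $j$-th column of $-(\mat{A}_{\alpha\alpha}+\mat{C}_{\alpha\alpha})^{-1}(\mat{A}_{\alpha\beta}+\mat{C}_{\alpha\beta})$ is $>\mat{0}$, establishing the second half of~\eqref{eq:matformsdmpB}.

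The main new subtlety over Lemma~\ref{lma:matformsdmpA} is the term $-L(\mat{A}_{\alpha\alpha}+\mat{C}_{\alpha\alpha})^{-1}(\mat{C}\mat{1})_\alpha$: one must shift by $L=-\max_{\partial D}(u_h^b)^-$ rather than $\min_{\partial D}u_h^b$ so that $-L\ge 0$ renders this term nonnegative, and in the equality case one must distinguish $m=0$ from $m<0$, extracting $(\mat{C}\mat{1})_\alpha=\mat{0}$ in the latter case before concluding. Everything else is a routine transcription.
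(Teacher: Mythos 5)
Your proposal is correct and follows essentially the same route as the paper's proof: the same matrix identity $(\mat{A}_{\alpha\alpha}+\mat{C}_{\alpha\alpha})\mat{1}_\alpha+(\mat{A}_{\alpha\beta}+\mat{C}_{\alpha\beta})\mat{1}_\beta=\mat{C}_{\alpha\alpha}\mat{1}_\alpha+\mat{C}_{\alpha\beta}\mat{1}_\beta$, the same shift by a multiple of $\mat{1}$, the same positivity bookkeeping in the equality case, and the same basis-vector argument for the converse. The only cosmetic difference is that you merge the paper's two cases ($r=0$ when $\min u_h^b\ge 0$, $r=\min u_h^b$ otherwise) into the single shift $L=\min(0,\min_{\partial D}u_h^b)$, which amounts to the identical computation.
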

\begin{proof}
Again, we restrict our attention to the case when $f$ is nonnegative. 
The matrix form of~\eqref{eq:weakelldisc} with $\tilde{c} \ge  0$ is
\begin{equation}
\label{eq:matforFEcpos}
(\mat{A}_{\alpha \alpha} +\mat{C}_{\alpha \alpha})\mat{u}_{\alpha} +  
(\mat{A}_{\alpha \beta} + \mat{C}_{\alpha \beta})\mat{u}_{\beta}  = \mat{F}_{\alpha}.
\end{equation}
Using~\eqref{eq:stiffnessone}, we have 
\begin{eqnarray}
\label{eq:stiffCone}
(\mat{A}_{\alpha \alpha} + \mat{C}_{\alpha \alpha})\mat{1}_{\alpha} + (\mat{A}_{\alpha \beta} + \mat{C}_{\alpha \beta})\mat{1}_{\beta}
= \mat{C}_{\alpha \alpha} \mat{1}_{\alpha} + \mat{C}_{\alpha \beta}\mat{1}_{\beta}.
\end{eqnarray}
After multiplying~\eqref{eq:stiffCone} by a scalar $r$ and subtracting from~\eqref{eq:matforFEcpos} we obtain
\begin{eqnarray*}
%\label{eq:stiffCone}
(\mat{A}_{\alpha \alpha} +\mat{C}_{\alpha \alpha})(\mat{u}_{\alpha} - r \mat{1}_{\alpha})  +  
(\mat{A}_{\alpha \beta} + \mat{C}_{\alpha \beta})(\mat{u}_{\beta} - r \mat{1}_{\beta})   = 
\mat{F}_{\alpha} - r (\mat{C}_{\alpha \alpha} \mat{1}_{\alpha} + \mat{C}_{\alpha \beta}\mat{1}_{\beta}).
\end{eqnarray*}
Hence,
\begin{eqnarray}
\label{eq:stiffConeB}
\mat{u}_{\alpha} - r \mat{1}_{\alpha} &=& 
-\: (\mat{A}_{\alpha \alpha} +\mat{C}_{\alpha \alpha})^{-1}(\mat{A}_{\alpha \beta} + \mat{C}_{\alpha \beta})(\mat{u}_{\beta} - r \mat{1}_{\beta})  \\
\nonumber&&{+}\  (\mat{A}_{\alpha \alpha} +\mat{C}_{\alpha \alpha})^{-1}\mat{F}_{\alpha} \\
\nonumber&&-\ r(\mat{A}_{\alpha \alpha} +\mat{C}_{\alpha \alpha})^{-1} (\mat{C}_{\alpha \alpha} \mat{1}_{\alpha} + \mat{C}_{\alpha \beta}\mat{1}_{\beta}).
\end{eqnarray}
First we assume the conditions~\eqref{eq:matformsdmpB} hold. Then 
{\color{black}$$(\mat{A}_{\alpha \alpha} +\mat{C}_{\alpha \alpha})^{-1}\mat{F}_{\alpha}\ge 0.$$}
Let $\underline{u} = \min u_h^b = \min \mat{u}_\beta$. If $\underline{u} \ge 0$, then $\mat{u}_{\beta}^- = \mat{0}$. By taking $r=0$ 
in~\eqref{eq:stiffConeB} we get 
\begin{eqnarray*}
%\label{eq:stiffCone}
\mat{u}_{\alpha} \ge 
\overbrace{-\: (\mat{A}_{\alpha \alpha} +\mat{C}_{\alpha \alpha})^{-1}(\mat{A}_{\alpha \beta} + \mat{C}_{\alpha \beta})}^{> \mat{0}}
\mat{u}_{\beta} + \overbrace{(\mat{A}_{\alpha \alpha} +\mat{C}_{\alpha \alpha})^{-1}}^{>\mat{0}}\mat{F}_{\alpha}\ge \mat{0} ,
\end{eqnarray*}
which shows that $\min \mat{u}_{\alpha} \ge 0 = -\max \mat{u}_{\beta}^-$. If one of the coordinates of $\mat{u}_{\alpha}$ is 0, then
we must have $\mat{u}_{\beta}=\mat{0}$ and $\mat{F}_{\alpha} = \mat{0}$; from~\eqref{eq:matforFEcpos} it follows that 
$\mat{u}_{\alpha} =\mat{0}$.

If $\underline{u} < 0$, then we take $r=\underline{u}$ in~\eqref{eq:stiffConeB}, and we get
\begin{eqnarray*}
%\label{eq:stiffCone}
\mat{u}_{\alpha} - \underline{u} \mat{1}_{\alpha} &=& 
\overbrace{-\: (\mat{A}_{\alpha \alpha} +\mat{C}_{\alpha \alpha})^{-1}(\mat{A}_{\alpha \beta} + \mat{C}_{\alpha \beta})}^{> \mat{0}}
\overbrace{(\mat{u}_{\beta} - \underline{u} \mat{1}_{\beta})}^{\ge \mat{0}} \\
&&-\ \underline{u}\underbrace{(\mat{A}_{\alpha \alpha} +\mat{C}_{\alpha \alpha})^{-1} 
(\mat{C}_{\alpha \alpha} \mat{1}_{\alpha} + \mat{C}_{\alpha \beta}\mat{1}_{\beta})}_{\ge \mat{0}} 
+ \underbrace{(\mat{A}_{\alpha \alpha} +\mat{C}_{\alpha \alpha})^{-1}}_{>\mat{0}}\mat{F}_{\alpha}\ge \mat{0},
\end{eqnarray*}
where we used  $-\underline{u} > 0$ and  $\mat{C}\ge \mat{0}$. 
Therefore, $\mat{u}_{\alpha} - \underline{u} \mat{1}_{\alpha}\ge \mat{0}$, showing that
$$\min \mat{u}_{\alpha} \ge \underline{u} = -\max \mat{u}_{\beta}^-.$$
If $\mat{u}_{\alpha} - \underline{u} \mat{1}_{\alpha}$ has a coordinate that is zero, then we have
$$
(\mat{u}_{\beta} - \underline{u} \mat{1}_{\beta}) = \mat{0},\ \ \mat{F}_{\alpha} =  \mat{0},\ \ 
\underline{u} (\mat{C}_{\alpha \alpha} \mat{1}_{\alpha} + \mat{C}_{\alpha \beta}\mat{1}_{\beta}) =  \mat{0}.
$$
By applying~\eqref{eq:stiffConeB} with $r=\underline{u}$ we get $(\mat{u}_{\alpha} - \underline{u} \mat{1}_{\alpha}) = \mat{0}$,
showing that $\mat{u}$ is constant. Hence, we proved sDMP-B holds.

The proof of the reverse implication is similar to that in Lemma~\ref{lma:matformsdmpA}.
\end{proof}

\begin{remark}
\label{rem:secconditionDMP}
In practice, the second condition in~\eqref{eq:matformsdmpA} and~\eqref{eq:matformsdmpB} is verified
in the following way: for~\eqref{eq:matformsdmpA}, assuming $\mat{A}_{\alpha \alpha}^{-1} > \mat{0}$,
the second condition holds if $\mat{A}_{\alpha \alpha}^{-1}\mat{A}_{\alpha \beta} \le \mat{0}$
and for all $j\in \beta$ there exists $i\in \alpha$ so that $\mat{A}_{i j}<0$. This implies that every column $\mat{g}$
of $\mat{A}_{\alpha \beta}$ satisfies $\mat{g}\lneqq \mat{0}$, which renders $\mat{A}_{\alpha \alpha}^{-1}\mat{g} <\mat{0}$.
For~\eqref{eq:matformsdmpB}, we replace in the argument above
$\mat{A}_{\alpha \alpha}$ with $(\mat{A}_{\alpha \alpha} + \mat{C}_{\alpha \alpha})$
and $\mat{A}_{\alpha \beta}$ with $(\mat{A}_{\alpha \beta} + \mat{C}_{\alpha \beta})$.
\end{remark}

Due to the way it will be applied to linear elliptic equations, 
we reinterpret Corollary~\ref{cor:DMP} in this context as the following:
\begin{corollary}
\label{cor:sDMPmatform}Let $(\op{S}^E_h)_{E\subseteq D}$ be a {consistent family of solution operators}
associated with the discrete linear variational {\color{black}problem}~\eqref{eq:weakelldisc}.
We assume the triangulation $\op{T}_h$ is so that the graph of the interior vertices is connected, 
and that every boundary vertex is adjacent to an interior vertex. Furthermore, assume that 
there exists a finite set of discrete subdomains $(E_{i})_{i\in I}$ so that 
%\begin{align}
%\label{eq:corDMP2} 
$Int(D) = \cup_{i\in I} Int(E_{i})$,
%\end{align}
and denote by $\alpha_i$ the set of interior vertices of $E_i$ and by $\beta_i$ the set of boundary vertices of $E_i$.\\
{\bf ({\color{black}i})} If \  $\tilde{c}\equiv 0$ and for all $i\in I$ we have 
\begin{equation}
\label{eq:matformsdmpAthm}
\mat{A}_{\alpha_i \alpha_i}^{-1} > \mat{0}\ \ \mathrm{and}\ \ \mat{A}_{\alpha_i \alpha_i}^{-1}\mat{A}_{\alpha_i \beta_i} <\mat{0},
\end{equation}
then $\op{S}^{D}_h$ satisfies sDMP-A.\\
{\bf ({\color{black}ii})} If \  $\tilde{c}\ge 0$ and for all $i\in I$ we have 
\begin{equation}
\label{eq:matformsdmpBthm}
(\mat{A}_{\alpha_i \alpha_i}+\mat{C}_{\alpha_i \alpha_i})^{-1} > \mat{0}\ \ 
\mathrm{and}\ \ (\mat{A}_{\alpha_i \alpha_i}+\mat{C}_{\alpha_i \alpha_i})^{-1}
(\mat{A}_{\alpha_i \beta_i}+\mat{C}_{\alpha_i \beta_i}) <\mat{0},
\end{equation}
then $\op{S}^{D}_h$ satisfies sDMP-B.
\end{corollary}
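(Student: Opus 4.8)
The plan is to reduce the statement to Corollary~\ref{cor:DMP} by recognizing the displayed matrix conditions as the local characterizations furnished by Lemmas~\ref{lma:matformsdmpA} and~\ref{lma:matformsdmpB}, applied on each $E_i$ rather than on $D$. So I would fix the symbol $X=A$ for part~(i) and $X=B$ for part~(ii), and argue that each $\op{S}^{E_i}_h$ satisfies sDMP-$X$; the conclusion then follows directly from Corollary~\ref{cor:DMP}, whose remaining hypotheses (the interior-vertex graph of $\op{T}_h$ is connected, every boundary vertex is adjacent to an interior vertex, and $Int(D)=\cup_{i\in I}Int(E_i)$) are assumed here verbatim.

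The one point requiring care is that Lemmas~\ref{lma:matformsdmpA}--\ref{lma:matformsdmpB} are phrased in terms of the stiffness and reaction matrices assembled \emph{on} the domain in question, whereas conditions~\eqref{eq:matformsdmpAthm}--\eqref{eq:matformsdmpBthm} use the blocks $\mat{A}_{\alpha_i\alpha_i}$, $\mat{A}_{\alpha_i\beta_i}$, $\mat{C}_{\alpha_i\alpha_i}$, $\mat{C}_{\alpha_i\beta_i}$ extracted from the \emph{global} matrices defined in~\eqref{eq:stiffmassreac}. I would first note that each $E_i$ is a polygonal (or polyhedral) domain carrying the triangulation $\op{T}_h$ restricted to $E_i$, and that $\op{S}^{E_i}_h$ is precisely the discrete solution operator of~\eqref{eq:weakelldisc} on $E_i$. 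Then I would check the identification of the matrix blocks: if $i\in\alpha_i$ is an interior vertex of $E_i$ then $\mathrm{supp}(\varphi_i)\subseteq\overline{E_i}$, so for every node $j$ the integrals defining $a_D(\varphi_j,\varphi_i)$ and $\linnprd{\tilde c\,\varphi_j}{\varphi_i}_D$ are effectively restricted to $E_i$ (this is the same localization used in the proof of Lemma~\ref{lma:restriction}), hence they coincide with the corresponding entries of the matrices assembled on $E_i$. Consequently the blocks indexed by $(\alpha_i,\alpha_i)$ and $(\alpha_i,\beta_i)$ of $\mat{A}$ and $\mat{C}$ agree with the analogous blocks computed on $E_i$.

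With this identification in hand, condition~\eqref{eq:matformsdmpAthm} is exactly the criterion of Lemma~\ref{lma:matformsdmpA} with $D$, $\alpha$, $\beta$ replaced by $E_i$, $\alpha_i$, $\beta_i$, so in case~(i) each $\op{S}^{E_i}_h$ satisfies sDMP-A; similarly, since $\tilde c\ge 0$ forces $\mat{C}\ge\mat{0}$, condition~\eqref{eq:matformsdmpBthm} together with Lemma~\ref{lma:matformsdmpB} gives that in case~(ii) each $\op{S}^{E_i}_h$ satisfies sDMP-B. Invoking Corollary~\ref{cor:DMP} with $X=A$ (resp. $X=B$) then yields that $\op{S}^D_h$ satisfies sDMP-A (resp. sDMP-B). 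I expect the only real obstacle to be the bookkeeping of the second paragraph — confirming that the submatrices pulled out of the global matrices really are the matrices one would assemble on $E_i$ — and that is immediate from the support of the interior nodal basis functions; everything else is a straightforward chaining of Lemmas~\ref{lma:matformsdmpA}--\ref{lma:matformsdmpB} with Corollary~\ref{cor:DMP}.
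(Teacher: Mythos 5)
Your proposal is correct and follows exactly the route the paper intends: the paper states this corollary as a direct ``reinterpretation'' of Corollary~\ref{cor:DMP} obtained by applying the matrix characterizations of Lemmas~\ref{lma:matformsdmpA} and~\ref{lma:matformsdmpB} on each patch $E_i$, which is precisely your chain of reasoning. Your extra care in checking that the blocks $\mat{A}_{\alpha_i\alpha_i}$, $\mat{A}_{\alpha_i\beta_i}$ (and the $\mat{C}$ blocks) of the global matrices coincide with the matrices assembled on $E_i$ --- via the support of the interior nodal basis functions, as in Lemma~\ref{lma:restriction} --- is a worthwhile detail the paper leaves implicit, but it does not constitute a different approach.
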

Corollary~\ref{cor:sDMPmatform} offers a practical way to verify sDMP-A or sDMP-B {\bf globally} (on the entire domain $D$), 
by covering $D$ with patches where the corresponding DMP is satisfied, which can be verified by checking the conditions~\eqref{eq:matformsdmpAthm}
or~\eqref{eq:matformsdmpBthm} on each patch.

{\color{black}
We present a greedy algorithm that can be used in connection with Corollary~\ref{cor:sDMPmatform}. Let $\op{V}$ be the set of 
all the interior vertices. For a given vertex $P$ denote by $\op{V}_k(P)$ the set of all the vertices in $\op{V}$
that can be connected to $P$ via at most $k$ interior edges, and let
$$E^k_P = \bigcup\{ T\in \op{T}_h\ : \exists Q\in \op{V}_k(P)\ \mathrm{so\ that}\ Q\in \overline{T}\},$$ 
that is, the union of the stars around the vertices in $\op{V}_k(P)$.

%%%
\begin{algorithm}
{\color{black}
\begin{algorithmic}[1]
\State $\op{V}^{rem} :=  \op{V}$ 
\While{$\op{V}^{rem} \ne \varnothing$}
\State Select $P\in \op{V}^{rem}$
\State $k := 0$
\State sDMP := $False$
\Repeat{}
\State $k:= k+1$
\If{$\eqref{eq:matformsdmpAthm}$ holds on $E^k_P$}
\State sDMP := $True$
\EndIf
\State $\op{V}^{rem} := \op{V}^{rem} \setminus \op{V}_k(P)$
\Until{sDMP or $\op{V}^{rem}=\varnothing $}
%\If{$\op{V}_k(P) = \op{V} $}
%$\op{V}^{rem} = \varnothing$
%\EndIf
\EndWhile
\State \Return sDMP
\end{algorithmic}
}
\caption{Greedy algorithm for verifying sDMP}
\label{alg:DMP}
\end{algorithm}
%%%

Algorithm~\ref{alg:DMP} seeks to cover the domain $D$ with patches of the form $E^k_{P_i}$, $k=1, 2, ...$,  for some interior vertices $P_i$, on
which  sDMP-A (or sDMP-B, respectively) is satisfied; the vertices $P_i$ are the ones selected at line 3. 
In order to switch from sDMP-A to sDMP-B, one needs to replace~\eqref{eq:matformsdmpAthm}
with~\eqref{eq:matformsdmpBthm} on line~8. Algorithm~\ref{alg:DMP} will return ''True'' if sDMP holds, and ''False'' otherwise.
The algorithm tracks the set of vertices that remain to be checked, which is denoted by $\op{V}^{rem}$, a set that will become empty in finitely many steps,
regardless of whether the variable ``sDMP'' becomes ``True'' or stays ``False''. Cf. Corollary~\ref{cor:sDMPmatform}, 
if  sDMP-A (or sDMP-B, respectively) fails on the entire domain $D$, there must be at least one interior vertex $P_{\mathrm{fail}}$ that does not lie in any patch 
$E^k_{P}$ on which sDMP-A (or sDMP-B, respectively) holds. Hence, the point $P_{\mathrm{fail}}$ will not be included in the 
``good'' patches of any other vertex selected 
at line 3. Moreover, $E^k_{P_{\mathrm{fail}}} = D$ for some $k>0$, showing that the variable ``sDMP''will remain ``False'' when returned. Thus, under the assumption of the
connectivity of the graph of the interior vertices, Algorithm~\ref{alg:DMP} always provides the correct answer on the validity of the sDMP for linear problems.

In terms of the complexity, the majority of the work goes into the test of line~8, where~\eqref{eq:matformsdmpAthm} (or ~\eqref{eq:matformsdmpBthm}, respectively)
is verified; that cost is $O(N_k(P)^3)$ where $N_k(P)$ is the cardinality of $\op{V}_k(P)$, due to the 
necessity of computing the inverse of a matrix. For each selected $P$, prior to completing the loop in lines 6--12, Algorithm~\ref{alg:DMP} will run through line~8 
for $1, 2,\dots,k_P$. Hence, the cost of running the loop is
$
O\left(\sum_{i=1}^{k_P} N_i(P)^3\right).
$
If $I_{\mathrm{DMP}}$ is the set of vertices selected at line 3, then the total cost is
\begin{equation}
\label{eq:cost1}
C_{Alg_1} = \sum_{P\in I_{\mathrm{DMP}}} O\left(\sum_{i=1}^{k_P} N_i(P)^3\right).
\end{equation}
Under additional assumptions, this leads to a simple, worst-case scenario, upper bound of the computational cost. 
Let $k_{\max}$ be the largest value of $k$ that appears on line~7 of Algorithm~\ref{alg:DMP}. 
Assuming quasi-uniformity of the mesh, we have $N_k(P) \approx O(k^d)$. Hence,
\begin{equation}
\label{eq:cost2}
C_{Alg_1} = N \sum_{P\in I_{\mathrm{DMP}}} O\left(\sum_{i=1}^{k_{\max}} i^{3d}\right) = O(N k_{\max}^{3d+1}).
\end{equation}
The upper bound in~\eqref{eq:cost2} does not take into account the balance between a large $k_{\max}$ and the size of $I_{\mathrm{DMP}}$: if $k_{\max}$
is relatively large there will be fewer calls to line~3 in the algorithm, which may lead to $|I_{\mathrm{DMP}}| \ll N$. 
In the extreme case when sDMP is not satisfied, then it might even happen that $|I_{\mathrm{DMP}}| = 1$, 
in case  Algorithm~\ref{alg:DMP} starts at $P_{\mathrm{fail}}$. Hence the cost is just $O(k_{\max}^{3d+1})$; since
$N = O(k^d_{\max})$, then the cost is $O(N^{3+\frac{1}{d}})$. The last number is higher than verifying directly the sDMP on the entire mesh, which has a cost of $O(N^3)$. 
However, if $k^3_{\max} \ll N$ then Algorithm~\ref{alg:DMP} is significantly more efficient than a direct verification. In particular, when 
mesh refinement leads to a small number of isolated irregularities, as shown in Section~\ref{ssec:nonmonotedge}, then the cost
is $O(N)$.

}

\subsection{Sufficient conditions for a weak discrete maximum principle}
\label{ssec:moreDMPs}
%%%%%%%%%%
% Lemma: c \le zero
In this section we use a perturbation argument and sDMP-R property to prove wDMP-A for the linear elliptic equation
under weaker conditions than in Corollary~\ref{cor:sDMPmatform}. This is relevant because there are standard meshes 
where the second inequality in~\eqref{eq:matformsdmpAthm} does not hold in its strict form.

%\begin{definition}%[WDMP-A]
%\label{def:wdmpL}
%We say that a solution operator $\op{S}^D_h$ satisfies the \emph{A-version of the weak discrete maximum principle} (WDMP-A) 
%if~\eqref{eq:ldmppos} holds for every nonnegative $f\in (V_0^h(\overline{D}))^*$.\\
%We also say that a solution operator $\op{S}^D_h$ satisfies the {\bf restricted} 
%\emph{A-version of the strong discrete maximum principle} (SDMP-R) if $\op{S}^D_h$ 
%satisfies SDMP-A whenever $u_h = \op{S}^D_h(f,u_h^b)$ is nonnegative.
%\end{definition}

The question of well-posedness of~\eqref{eq:weakelldisc} under these conditions will certainly arise and {\color{black}be}
handled properly, but the current focus is on the relevant DMP. 
\begin{lemma} 
\label{lma:matformsdmpRB}
Consider the context and hypotheses of Lemma~\textnormal{\ref{lma:matformsdmpB}}, except we now assume $\tilde{c}\le  0$. 
If~\eqref{eq:matformsdmpB} holds, then $\op{S}^D_h$ is well-posed and satisfies sDMP-R.
\end{lemma}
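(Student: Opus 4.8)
The plan is to reduce the statement to Lemma~\ref{lma:matformsdmpB} by a careful sign/monotonicity comparison between the reaction matrix $\mat{C}$ (which is now nonpositive) and a harmless companion. First I would address well-posedness: since $\tilde c\le 0$, the bilinear form $a_D(\cdot,\cdot)+(\tilde c\,\cdot,\cdot)_D$ need not be coercive a priori, but the hypothesis~\eqref{eq:matformsdmpB} asserts that $\mat{A}_{\alpha\alpha}+\mat{C}_{\alpha\alpha}$ is invertible (indeed its inverse is positive), which is exactly the algebraic statement that the discrete problem~\eqref{eq:matforFEcpos} has a unique solution for every right-hand side and every boundary data. So well-posedness is immediate from the assumed invertibility; I would state this in one line.

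For the sDMP-R claim, recall that sDMP-R is sDMP-A restricted to the case where the solution $u_h=\op{S}^D_h(f,u_h^b)$ happens to be \emph{nonnegative}. The key observation is that when $u_h\ge 0$ pointwise and $\tilde c\le 0$, the term $(\tilde c\, u_h,\varphi_i)_D$ is nonpositive, i.e.\ $\mat{C}_{\alpha\alpha}\mat{u}_\alpha+\mat{C}_{\alpha\beta}\mat{u}_\beta\le\mat 0$ componentwise (using $\mat{u}\ge\mat 0$ and $\mat{C}=\tilde c\mat{M}\le\mat 0$, the mass matrix entries being nonnegative). Then from~\eqref{eq:matforFEcpos},
\begin{equation*}
\mat{A}_{\alpha\alpha}\mat{u}_\alpha+\mat{A}_{\alpha\beta}\mat{u}_\beta=\mat{F}_\alpha-\big(\mat{C}_{\alpha\alpha}\mat{u}_\alpha+\mat{C}_{\alpha\beta}\mat{u}_\beta\big)=\mat{F}_\alpha+\mat{G}_\alpha
\end{equation*}
with $\mat{G}_\alpha\ge\mat 0$. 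In other words, a nonnegative solution of the $\tilde c\le 0$ problem with source $\mat F$ is simultaneously a solution of the pure-Laplace problem ($\tilde c\equiv 0$) with the enlarged, still-nonnegative source $\mat F+\mat G$. So I want to invoke a DMP for the operator $\mat{A}_{\alpha\alpha}$ on nonnegative data. The cleanest route is to show that~\eqref{eq:matformsdmpB} with $\tilde c\le 0$ forces~\eqref{eq:matformsdmpA} for the stiffness matrix alone; this follows from a standard M-matrix/monotonicity comparison: writing $\mat{A}_{\alpha\alpha}=(\mat{A}_{\alpha\alpha}+\mat{C}_{\alpha\alpha})-\mat{C}_{\alpha\alpha}$ with $-\mat{C}_{\alpha\alpha}\ge\mat 0$, a positive inverse of $\mat{A}_{\alpha\alpha}+\mat{C}_{\alpha\alpha}$ together with a spectral-radius argument (the perturbation $(\mat{A}_{\alpha\alpha}+\mat{C}_{\alpha\alpha})^{-1}(-\mat{C}_{\alpha\alpha})$ is nonnegative) yields $\mat{A}_{\alpha\alpha}^{-1}>\mat 0$ and $\mat{A}_{\alpha\alpha}^{-1}\mat{A}_{\alpha\beta}<\mat 0$ via the second part of~\eqref{eq:matformsdmpB}; then Lemma~\ref{lma:matformsdmpA} gives sDMP-A for the pure-Laplace operator.

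With $\mat{A}_{\alpha\alpha}^{-1}>\mat 0$ and $\mat{A}_{\alpha\alpha}^{-1}\mat{A}_{\alpha\beta}<\mat 0$ in hand, the argument concludes exactly as in the proof of Lemma~\ref{lma:matformsdmpA}: set $\underline u=\min u_h^b$, subtract $\underline u\mat 1$, and use $\mat{A}_{\alpha\alpha}\mat 1_\alpha+\mat{A}_{\alpha\beta}\mat 1_\beta=\mat 0$ to obtain
\begin{equation*}
\mat{u}_\alpha-\underline u\mat 1_\alpha=\mat{A}_{\alpha\alpha}^{-1}(\mat{F}_\alpha+\mat{G}_\alpha)-\mat{A}_{\alpha\alpha}^{-1}\mat{A}_{\alpha\beta}(\mat{u}_\beta-\underline u\mat 1_\beta)\ge\mat 0,
\end{equation*}
which gives the weak inequality $\min_{\overline D}u_h\ge\min_{\partial D}u_h^b$; and if some interior coordinate vanishes, strict positivity of $\mat{A}_{\alpha\alpha}^{-1}$ and of $-\mat{A}_{\alpha\alpha}^{-1}\mat{A}_{\alpha\beta}$ forces $\mat{F}_\alpha+\mat{G}_\alpha=\mat 0$ and $\mat{u}_\beta-\underline u\mat 1_\beta=\mat 0$, hence $u_h$ constant — which is precisely sDMP-A, and so sDMP-R holds. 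The main obstacle I anticipate is the comparison step showing~\eqref{eq:matformsdmpB} implies~\eqref{eq:matformsdmpA} for $\mat{A}_{\alpha\alpha}$: one must be careful that positivity of the inverse is preserved under the nonnegative perturbation $-\mat{C}_{\alpha\alpha}$, which requires either a Neumann-series/spectral-radius argument or invoking the characterization that a $\mat Z$-matrix with a positive inverse stays inverse-positive when subtracting a nonnegative matrix that keeps it nonsingular — this needs a short but genuine argument rather than a one-liner. The alternative, bypassing~\eqref{eq:matformsdmpA} and instead rerunning the Lemma~\ref{lma:matformsdmpB} computation directly while exploiting $\mat G\ge\mat 0$, may in fact be cleaner and I would try that first if the comparison lemma proves fussy.
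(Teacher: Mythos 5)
Your well-posedness argument and the final computation are fine, but the pivotal reduction step is not: you claim that \eqref{eq:matformsdmpB} forces \eqref{eq:matformsdmpA} for the stiffness matrix alone, by writing $\mat{A}_{\alpha\alpha}=(\mat{A}_{\alpha\alpha}+\mat{C}_{\alpha\alpha})+(-\mat{C}_{\alpha\alpha})$ with $-\mat{C}_{\alpha\alpha}\ge\mat 0$ and appealing to an M-matrix comparison. That implication is false in the generality of this lemma. Inverse-positivity is \emph{not} preserved under adding a nonnegative matrix unless the matrix is a $Z$-matrix (nonpositive off-diagonal), and the entire point of this paper is that $\mat{A}_{\alpha\alpha}+\mat{C}_{\alpha\alpha}$ is \emph{not} assumed to have that sign structure — only that its inverse is positive. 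Concretely, writing $\mat{A}_{\alpha\alpha}=(\mat{A}_{\alpha\alpha}+\mat{C}_{\alpha\alpha})\bigl(\mat{I}+\mat{N}\bigr)$ with $\mat{N}=(\mat{A}_{\alpha\alpha}+\mat{C}_{\alpha\alpha})^{-1}(-\mat{C}_{\alpha\alpha})\ge\mat 0$, there is no reason for $(\mat{I}+\mat{N})^{-1}$ to be nonnegative: already $\mat{N}=\left[\begin{smallmatrix}0&1\\0&0\end{smallmatrix}\right]$ gives $(\mat{I}+\mat{N})^{-1}=\left[\begin{smallmatrix}1&-1\\0&1\end{smallmatrix}\right]$. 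The Neumann series $\mat{I}-\mat{N}+\mat{N}^2-\cdots$ alternates in sign, so a spectral-radius bound does not rescue it. The same objection applies to your claim that $\mat{A}_{\alpha\alpha}^{-1}\mat{A}_{\alpha\beta}<\mat 0$ follows from the second part of \eqref{eq:matformsdmpB}. Since the lemma grants you hypotheses only on $\mat{A}+\mat{C}$ and nothing on $\mat{A}$ alone, any route that moves $\mat{C}\mat{u}$ to the right-hand side and then inverts $\mat{A}_{\alpha\alpha}$ is unavailable.

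The fallback you mention in your last sentence is the right instinct, but it must be executed with $\mat{A}+\mat{C}$ kept as the system matrix throughout, which is what the paper does. Starting from \eqref{eq:matforFEcpos} and the identity \eqref{eq:stiffConeB} with $r=\underline{u}=\min u_h^b$, the only term whose sign changes relative to the proof of Lemma~\ref{lma:matformsdmpB} is $-r\,(\mat{A}_{\alpha\alpha}+\mat{C}_{\alpha\alpha})^{-1}(\mat{C}_{\alpha\alpha}\mat{1}_{\alpha}+\mat{C}_{\alpha\beta}\mat{1}_{\beta})$; there the factor $\mat{C}\le\mat 0$ now makes the bracket nonpositive, and this is exactly compensated because the sDMP-R restriction ($u_h\ge 0$, hence $\underline{u}\ge 0$) flips the sign of $-r$ as well, so the term is again $\ge\mat 0$. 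The weak inequality and the rigidity in the equality case then follow verbatim from the strict positivity of $(\mat{A}_{\alpha\alpha}+\mat{C}_{\alpha\alpha})^{-1}$ and of $-(\mat{A}_{\alpha\alpha}+\mat{C}_{\alpha\alpha})^{-1}(\mat{A}_{\alpha\beta}+\mat{C}_{\alpha\beta})$, with no reference to $\mat{A}_{\alpha\alpha}^{-1}$. As written, your proof has a genuine gap at its central step.
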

The proof follows closely that of Lemma~\ref{lma:matformsdmpB}.
\begin{proof}
Assume $f$ is nonnegative. Since the matrix form of~\eqref{eq:weakelldisc}  is given, as in Lemma~\ref{lma:matformsdmpB}, 
by~\eqref{eq:matforFEcpos}, the first condition in~\eqref{eq:matformsdmpB} ensures that the linear equation~\eqref{eq:matforFEcpos}
has a unique solution;
hence, the problem is well-posed.
Let $\underline{u} = \min u_h^b = \min \mat{u}_\beta$. We assume $\underline{u} \ge 0$. By taking $r=\underline{u}$ 
in~\eqref{eq:stiffConeB} we get 
\begin{eqnarray*}
%\label{eq:stiffCone}
\mat{u}_{\alpha} - \underline{u} \mat{1}_{\alpha} &=& 
\overbrace{-\: (\mat{A}_{\alpha \alpha} +\mat{C}_{\alpha \alpha})^{-1}(\mat{A}_{\alpha \beta} + \mat{C}_{\alpha \beta})}^{> \mat{0}}
\overbrace{(\mat{u}_{\beta} - \underline{u} \mat{1}_{\beta})}^{\ge \mat{0}} \\
&&-\ \underline{u}\underbrace{(\mat{A}_{\alpha \alpha} +\mat{C}_{\alpha \alpha})^{-1} 
(\mat{C}_{\alpha \alpha} \mat{1}_{\alpha} + \mat{C}_{\alpha \beta}\mat{1}_{\beta})}_{\le \mat{0}} 
+ \underbrace{(\mat{A}_{\alpha \alpha} +\mat{C}_{\alpha \alpha})^{-1}}_{>\mat{0}}\mat{F}_{\alpha}\ge \mat{0},
\end{eqnarray*}
where we used  $\underline{u} \ge 0$ and  $\mat{C}\le \mat{0}$. 
Therefore, $\mat{u}_{\alpha} - \underline{u} \mat{1}_{\alpha}\ge \mat{0}$; thus, $\min \mat{u}_{\alpha} \ge \underline{u}$.
If $\mat{u}_{\alpha} - \underline{u} \mat{1}_{\alpha}$ has a coordinate that is zero, then we have
$$
(\mat{u}_{\beta} - \underline{u} \mat{1}_{\beta}) = \mat{0},\ \ \mat{F}_{\alpha} =  \mat{0},\ \ 
\underline{u} (\mat{C}_{\alpha \alpha} \mat{1}_{\alpha} + \mat{C}_{\alpha \beta}\mat{1}_{\beta}) =  \mat{0}.
$$
By applying~\eqref{eq:stiffConeB} with $r=\underline{u}$ we get $(\mat{u}_{\alpha} - \underline{u} \mat{1}_{\alpha}) = \mat{0}$,
showing that $\mat{u}$ is constant. Hence, we proved that $\op{S}^D_h$ satisfies sDMP-R.
\end{proof}

\begin{theorem}[wDMP-A]
\label{thm:wdmpa}
Consider the common hypotheses and notation from Corollary~\ref{cor:sDMPmatform}, and assume $\tilde{c}\equiv 0$.
%Let $(\op{S}^E_h)_{E\subseteq D}$ be a {consistent family of solution operators}
%associated with the discrete linear variational inequality~\eqref{eq:weakellcont} with $\tilde{c}\equiv 0$.
%It is assumed the triangulation $\op{T}_h$ is so that the graph of the interior vertices of is connected, 
%and that every boundary vertex is adjacent to an interior vertex. Also assume that 
%there exists a finite set of discrete subdomains $(E_{i})_{i\in I}$ so that 
%Furthermore, we denote by $\alpha_i$ the set of interior vertices of $E_i$ and by $\beta_i$ the set of boundary vertices of $E_i$.
If for all $i\in I$ we have 
\begin{equation}
\label{eq:matformWdmpAthm}
\mat{A}_{\alpha_i \alpha_i}^{-1} > \mat{0}\ \ \ \mathrm{and}\ \ \ \mat{A}_{\alpha_i \beta_i} \le \mat{0},
\end{equation}
then $\op{S}^{D}_h$ satisfies wDMP-A.
\end{theorem}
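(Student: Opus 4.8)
The plan is to obtain wDMP-A for the linear problem ($\tilde c\equiv0$) as the $\epsilon\to0$ limit of the \emph{strong} restricted principle sDMP-R for the nearby problems with reaction $\tilde c\equiv-\epsilon$. The point of introducing a perturbation is that hypothesis~\eqref{eq:matformWdmpAthm} only yields, on each patch $E_i$, a \emph{weak} estimate, which does not force $u_h$ to be constant on $\overline{E_i}$ when it attains an interior minimum there: a vanishing column of $\mat A_{\alpha_i\beta_i}$ (the right-angle situation) is exactly the obstruction. Consequently the connectivity mechanism of Theorem~\ref{thm:DMP}/Corollary~\ref{cor:DMP} cannot be run with the weak estimate alone, and one must repair the patch conditions so that the strong conclusion is available.

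\emph{Step 1: sDMP-R on $D$ after perturbation.} For $\epsilon>0$ let $(\op{S}^{E,\epsilon}_h)_{E\subseteq D}$ be the consistent family of solution operators of~\eqref{eq:weakelldisc} with $\tilde c\equiv-\epsilon$, whose reaction matrix is $\mat C=-\epsilon\mat M$ with $\mat M\ge\mat0$ (the piecewise linear mass matrix). Fix $i\in I$. Then $\mat A_{\alpha_i\beta_i}-\epsilon\mat M_{\alpha_i\beta_i}\le\mat0$, and since $\mat M_{i'j}>0$ whenever $P_{i'}$ and $P_j$ share a triangle of $E_i$, every column of $\mat A_{\alpha_i\beta_i}-\epsilon\mat M_{\alpha_i\beta_i}$ is $\lneqq\mat0$ (each boundary vertex of $E_i$ lying in a triangle of $E_i$ containing an interior vertex). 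Because $\mat A_{\alpha_i\alpha_i}^{-1}>\mat0$ and $I$ is finite, continuity of the matrix inverse gives $\epsilon_0>0$ with $(\mat A_{\alpha_i\alpha_i}-\epsilon\mat M_{\alpha_i\alpha_i})^{-1}>\mat0$ for all $i\in I$ and $0<\epsilon<\epsilon_0$; shrinking $\epsilon_0$, $\mat A_{\alpha\alpha}-\epsilon\mat M_{\alpha\alpha}$ is also symmetric positive definite on every $E\subseteq D$, so the family is well-posed. Hence, by Remark~\ref{rem:secconditionDMP}, the pair~\eqref{eq:matformsdmpB} holds for $\op{S}^{E_i,\epsilon}_h$; Lemma~\ref{lma:matformsdmpRB} (applied on $E_i$) then gives that each $\op{S}^{E_i,\epsilon}_h$ satisfies sDMP-R, and Corollary~\ref{cor:DMP} (with $X=R$) upgrades this to: $\op{S}^{D,\epsilon}_h$ satisfies sDMP-R for every $0<\epsilon<\epsilon_0$.

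\emph{Step 2: the limit $\epsilon\to0^{+}$.} Let $f$ be nonnegative and $u_h^b\in V^h(\partial D)$. Since $\mat A\mat1=\mat0$ (cf.~\eqref{eq:stiffnessone}), $\op{S}^D_h(f,\cdot)$ commutes with adding constants to the boundary data, so after subtracting $\underline u\defeq\min_{\partial D}u_h^b$ we may assume $u_h^b\ge\mat0$ and $\min_{\partial D}u_h^b=0$, and it suffices to prove $u_h\defeq\op{S}^D_h(f,u_h^b)\ge0$ on $\overline D$. Set $u_h^\epsilon=\op{S}^{D,\epsilon}_h(f,u_h^b)$ and $z_h^\epsilon=\op{S}^{D,\epsilon}_h(0,\mat1_\beta)$ for $0<\epsilon<\epsilon_0$; finite-dimensionality gives $u_h^\epsilon\to u_h$ and $z_h^\epsilon\to1$ uniformly as $\epsilon\to0^{+}$ (the constant function $1$, because $\op{S}^D_h(0,\mat1_\beta)\equiv1$). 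For $\epsilon$ small enough $z_h^\epsilon\ge0$, so sDMP-R (zero source, nonnegative solution) gives $z_h^\epsilon\ge\min_{\partial D}\mat1_\beta=1$ on $\overline D$. Now suppose $m\defeq\min_{\overline D}u_h^\epsilon<0$ and put $K=-m>0$; by linearity $\op{S}^{D,\epsilon}_h(f,u_h^b+K)=u_h^\epsilon+Kz_h^\epsilon\ge u_h^\epsilon+K\ge0$, so sDMP-R yields $u_h^\epsilon+Kz_h^\epsilon\ge\min_{\partial D}(u_h^b+K)=K$; evaluating at a vertex $P^{*}$ with $u_h^\epsilon(P^{*})=m=-K$ gives $z_h^\epsilon(P^{*})\ge2$. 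Since $z_h^\epsilon\to1$ this is impossible for $\epsilon$ small; hence $u_h^\epsilon\ge0$ for all small $\epsilon$, and letting $\epsilon\to0^{+}$ gives $u_h\ge0$, which, after undoing the shift, is wDMP-A.

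\emph{Principal difficulty.} The crux is that sDMP-R constrains only \emph{nonnegative} solution vectors, and the negative-reaction problems carry no maximum principle in general, so sDMP-R cannot be applied to $u_h^\epsilon$ directly; the device of lifting the boundary data by exactly $K=-\min u_h^\epsilon$ and exploiting that the perturbed operator nearly reproduces constants ($z_h^\epsilon\to1$) is what produces the contradiction $z_h^\epsilon(P^{*})\ge2$. A secondary point needing care is the verification of the \emph{strict} second inequality in~\eqref{eq:matformsdmpB} on each patch after perturbation: it rests on strict positivity of mass-matrix entries on shared triangles, hence on each patch boundary vertex being joined through a triangle to an interior vertex of that patch (automatic for star/macroelement covers, and in any case arrangeable by enlarging the patches).
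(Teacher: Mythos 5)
Your proposal is correct and follows essentially the same strategy as the paper: perturb the reaction to $-\epsilon$, use the mass matrix (and the connectivity of each patch boundary vertex to an interior vertex) to restore the strict second inequality in~\eqref{eq:matformsdmpB}, invoke Lemma~\ref{lma:matformsdmpRB} and the globalization result to get sDMP-R on $D$ for small $\epsilon$, and pass to the limit. The only divergence is in the final limiting step, where the paper uses a simpler device — shift the \emph{unperturbed} solution by $-\min_{\overline D}u_h+1$ so that the perturbed solutions are eventually positive and sDMP-R applies to them directly, then let $\epsilon\to 0$ in the resulting inequality — whereas your contradiction argument via the near-constant auxiliary solution $z_h^{\epsilon}$ is valid but more elaborate than necessary.
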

% PROOF
Note the similarity with the statement from Corollary~\ref{cor:sDMPmatform}{\color{black}{\bf (i)}}; here we 
relaxed the  second condition in~\eqref{eq:matformsdmpAthm}, and the conclusion is a weaker form of the DMP, namely wDMP-A.
\begin{proof}
We begin by defining a sequence of families of consistent discrete  solution operators.
This family would, in principle, be the finite element discretization of~\eqref{eq:slcontdef}-\eqref{eq:slcontBC} with 
${c}(x,u) = -\varepsilon u$ and $\varepsilon>0$; however, neither the continuous, nor the discrete versions
are  guaranteed to be well-posed for a sufficiently rich family of  $\varepsilon>0$.
In order to circumvent this problem, we restrict our attention to discrete solution operators, 
and only to  a certain sequence $(\varepsilon_n)_{n\in \N}$ that converges to 0.

To define our solution operators first we note that the triangulation $\op{T}_h$ is fixed.
For {\bf every } discrete subdomain $E\subseteq D$ denote its sets of interior and boundary  vertices 
by $\alpha=\alpha(E)$ and $\beta=\beta(E)$, respectively. 
%We consider the set of operators $\mat{A}_{\alpha \alpha} - \varepsilon \mat{M}_{\alpha \alpha}$. 
Define the number
\begin{eqnarray}
\label{eq:setsigma}
\delta = \min\bigcup_{E\subseteq D}\sigma(\mat{M}_{\alpha(E) \alpha(E)}^{-1}\mat{A}_{\alpha(E) \alpha(E)}),
\end{eqnarray}
with the union being taken over all the discrete subdomains, and $\sigma(\mat{B})$ denoting the spectrum of a 
matrix $\mat{B}$.
The expression in~\eqref{eq:setsigma} is well defined because the set on the right-hand side above is finite.
Moreover, since both 
$\mat{M}_{\alpha(E) \alpha(E)}$ and $\mat{A}_{\alpha(E) \alpha(E)}$ are symmetric positive definite matrices,
the set in~\eqref{eq:setsigma} lies in $(0,\infty)$, implying that $\delta>0$.
{\color{black}Let $\varepsilon_n$ be a sequence so that $0<\varepsilon_n<\delta$ and $\lim_{n\to\infty }\varepsilon_n  = 0$. Define }
%\begin{eqnarray*}
%\varepsilon_n\notin \bigcup_{E\subseteq D}\sigma(\mat{M}_{\alpha(E) \alpha(E)}^{-1}\mat{A}_{\alpha(E) \alpha(E)}).
%\end{eqnarray*}
\begin{eqnarray*}
\mat{B}^{(n)}_i = \mat{A}_{\alpha_i \alpha_i} - \varepsilon_n \mat{M}_{\alpha_i \alpha_i}.
\end{eqnarray*}
Since $\varepsilon_n\notin \sigma(\mat{M}_{\alpha(E) \alpha(E)}^{-1}\mat{A}_{\alpha(E) \alpha(E)})$, it follows that
$\mat{B}^{(n)}_i$ is invertible.
Due to the continuity of matrix inversion we have the following: for all $i\in I$
\begin{eqnarray*}
\lim_{n\to \infty} (\mat{B}^{(n)}_i)^{-1} = \mat{A}^{-1}_{\alpha_i \alpha_i} > \mat{0}.
\end{eqnarray*}
Hence, there exists $n_0\in \N$ so that for 
\begin{equation}
\label{eq:Bngzero}
\forall n\ge n_0,\ \ \forall i\in I,\ \ (\mat{B}^{(n)}_i)^{-1} > \mat{0}.
\end{equation}
It is assumed that  $\forall k\in \beta_i$, $\exists j\in \alpha_i$ so that
the boundary  vertex $P_k$ (of $E_i$) is connected to the interior vertex $P_j$; this translates
into $(\mat{M}_{\alpha_i \beta_i})_{jk}>0$, showing that every column of $\mat{M}_{\alpha_i \beta_i}$
has at least one positive entry.
Since $\mat{M}_{\alpha_i \beta_i} \ge \mat{0}$,~\eqref{eq:Bngzero} implies that 
\begin{eqnarray}
\label{eq:BnMnlzero1}
\forall n\ge n_0,\ \ \forall i\in I,\ \ 
(\mat{A}_{\alpha_i \alpha_i} - \varepsilon_n \mat{M}_{\alpha_i \alpha_i})^{-1} \mat{M}_{\alpha_i \beta_i} > \mat{0}.
\end{eqnarray}
Therfore, it follows from~\eqref{eq:BnMnlzero1},~\eqref{eq:matformWdmpAthm}, and $\varepsilon_n>0$ that
\begin{eqnarray}
\label{eq:BnMnlzero2}
\ \ \ \forall n\ge n_0,\ \  \forall i\in I,\ \ (\mat{A}_{\alpha_i \alpha_i} - \varepsilon_n \mat{M}_{\alpha_i \alpha_i})^{-1} 
(\mat{A}_{\alpha_i \beta_i}-\varepsilon_n\mat{M}_{\alpha_i \beta_i}) < \mat{0}.
\end{eqnarray}

Now we construct the family of consistent solution operators, namely we consider the following sequence of 
{\bf discrete} variational {\color{black}problems}: given a discrete subdomain $E\subseteq D$, 
find $u_h\in V^h(\overline{E})$ of the form $u_h = u_h^0+ u_h^b$
with $u_h^0\in V_0^h(\overline{E})$, so that 
\begin{eqnarray}
\label{eq:weakelldiscnegeps}
a_E(u_h^0+u_h^b,v) -\varepsilon_n\linnprd{u_h^0+u_h^b}{v}_E=\innprd{f}{v}_E,\ \ \forall v\in V_0^h(\overline{E}).
\end{eqnarray}
If $\alpha=\alpha(E)$ and $\beta=\beta(E)$, then~\eqref{eq:weakelldiscnegeps} is formulated in matrix form as
\begin{eqnarray}
\label{eq:weakelldiscnegepsmat}
(\mat{A}_{\alpha \alpha}-\varepsilon_n\mat{M}_{\alpha \alpha})\mat{u}_{\alpha} + 
(\mat{A}_{\alpha \beta} -\varepsilon_n\mat{M}_{\alpha \beta})\mat{u}_{\beta}  = \mat{F}_{\alpha}.
\end{eqnarray}
The choice of $\varepsilon_n$ ensures that~\eqref{eq:weakelldiscnegeps} is well posed (uniquely solvable), hence
this gives rise to a family of solution operators $\op{S}^E_{h,n}: (V_0^h(\overline{E}))^*\times V^h(\partial E) \to V^h(E)$,
so that $\op{S}^E_{h,n}(f,u_{\beta}) = u = u_{\alpha}+u_{\beta}$, with $\mat{u}_{\alpha}$ and $\mat{u}_{\beta}$ 
satisfying~\eqref{eq:weakelldiscnegepsmat}
being the vector representations of ${u}_{\alpha}$ and ${u}_{\beta}$, respectively.
Hence, Lemma~\ref{lma:matformsdmpRB} and~\eqref{eq:BnMnlzero2} imply that for all $i\in I$ and $n\ge n_0$, 
$\op{S}^{E_i}_{h,n}$ satisfies sDMP-R.
Due to the variational formulation~\eqref{eq:weakelldiscnegeps}, 
we can use an argument similar to that in Lemma~\ref{lma:restriction} to show that for all $n\ge n_0$, the family
$(\op{S}^E_{h,n})_{E\subseteq D}$ is consistent, as in Definition~\ref{def:soloperators}. Theorem~\ref{thm:DMP}{\bf (R)} now applies to show that
for all $n\ge n_0$, $\op{S}^{D}_{h,n}$ satisfies sDMP-R.

For the final step let 
$f \in (V_0^h(\overline{D}))^*$ be nonnegative and $u_h=\op{S}^{D}_{h}(f,u^b)$ 
be the solution of~\eqref{eq:weakelldisc} with $c\equiv 0$. 
Denote
\begin{eqnarray*}
\underline{u} = \min_{\overline{D}} u_h,\ \ \ 
v_h = u_h-\underline{u}+1,\ \ \ \mathrm{and}\ \  v_h^b = u_h^b-\underline{u}+1.
\end{eqnarray*} 
Since $c\equiv 0$, we have $v_h=\op{S}^{D}_{h}(f,v_h^b)$, because the pair
$\mat{v}_{\alpha}, \mat{v}_{\beta}$ given by 
$$
\mat{v}_{\alpha} = \mat{u}_{\alpha}-(\underline{u}-1)\mat{1}_{\alpha},\ \ \ 
\mat{v}_{\beta} = \mat{u}_{\beta}-(\underline{u}-1)\mat{1}_{\beta},
$$
where $\alpha=\alpha(D)$, $\beta=\beta(D)$
satisfy~\eqref{eq:matforFEczero}.
This way we ensure $v_h \ge 1$ (the number 1 is not special, all that matters is that $1>0$).
Define $v_{h,n}=\op{S}^{D}_{h,n}(f,v_h^b)$. Using~\eqref{eq:matforFEczero} 
and~\eqref{eq:weakelldiscnegepsmat}, we get 
$$\lim_{n\to\infty}v_{h,n} =  v_h \ge 1\ \ \mathrm{in}\ \ V^h(D)\ \ \mathrm{(pointwise\ or\ any\ norm)},$$ 
showing that $\exists  n_1\ge n_0$ so that for $n\ge n_1$ we have $v_{h,n}>0$. Since $\op{S}^{D}_{h,n}$ satisfies
sDMP-R, we get% for $\op{S}^{D}_{h,n}(f,v_h^b)$, and we obtain
{\color{black}
\begin{equation}
\label{eq:sdmprvn}
\min_{\overline{D}} v_{h,n} = \min_{\partial{D}} v_{h,n} = \min_{\partial{D}} v^b_{h},\ \ \forall n\ge n_1.
\end{equation}
}
After passing to the limit in~\eqref{eq:sdmprvn} we get
\begin{equation}
\label{eq:sdmpru}
\min_{\overline{D}} u_{h} = \min_{\overline{D}} v_{h} + \underbar{u}-1 = \min_{\partial{D}} v^b_{h} + \underbar{u}-1 =
\min_{\partial{D}} u^b_{h},
\end{equation}
showing that $\op{S}^{D}_{h}$ satisfies wDMP-A.
\end{proof}

\section{Application to linear elliptic equations}
\label{sec:linelliptic}
%\color{red}
We continue to restrict our attention to ${\mathcal P}_1$ finite elements.
\color{black}
In this section we apply the results from Section~\ref{sec:mainres} to the linear version of~\eqref{eq:weakelldisc},
namely the case where $c$ takes the form~\eqref{eq:Creaclinear}. The focus is on
the connection to classical results, the angle condition, and the violation of the strong DMP due to mesh properties at the boundary.

%In Section~\ref{ssec:classresult}
%we focus on the connection to classical results, followed by a discussion of the angle condition in Section~\ref{sec:angleclt}.

%In Section~\ref{ssec:nonmonotedge} we conduct a case study, where we
%show how our results explain why  DMPs hold for certain meshes with
%edges that  fail the angle condition in isolated triangles, as they
%can arise in standard mesh-refinement procedures.  In
%Section~\ref{ssec:degeneratemesh} we prove the DMP can be satisfied
%even for meshes with almost degenerate triangles. While none of these
%results are general, they illustrate why DMPs are so resilient to
%violations of the classical angle condition.

\subsection{Connection to the classical results}
\label{ssec:classresult}
%In this section we assume $\tilde{c}\equiv 0$. The case $\tilde{c}\ge 0$ will be discussed in the more general setting of
%semilinear elliptic equations in Section~\ref{sec:semilinear}.
In this section we revisit classical results for meshes where the classical angle condition~\eqref{eq:anglecond} (or its weaker version) is satisfied. 
This is related to local DMPs holding on the smallest meaningful discrete units, namely the union of all triangles that contain a vertex. 

\begin{definition}
\label{def:connected}
We call $E\subseteq D$ a \emph{connected discrete subdomain} if the graph of the nodes that are interior  to $E$ 
is connected (using only interior edges), and every vertex on $\partial E$ is connected to a  vertex interior to $E$.
\end{definition}
The following is a variant of a classical result,  which we prove here using the connectivity  technique introduced in
Section~\ref{sec:mainres}.
% classical DMP
\begin{theorem}
\label{thm:classicalDMP}
Assume $\tilde{c}\equiv 0$, and that the stiffness matrix $\mat{A}$ in~\eqref{eq:weakelldisc} satisfies
$\mat{A}_{ij}<0$ whenever the vertices $P_i$ and $P_j$ are adjacent with at least one of them lying in the interior 
(it does not need to hold if $P_i,P_j\in \partial D$). Then the discrete solution operator 
$\op{S}^E_h$ satisfies sDMP-A for every connected discrete subdomain $E\subseteq D$.
\end{theorem}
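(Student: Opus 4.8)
The plan is to apply Corollary~\ref{cor:DMP} (case $X=A$) with the family of macroelements $\omega_Q$ indexed by the interior vertices of the connected subdomain $E$, where $\omega_Q$ denotes the union of all triangles of $\op{T}_h$ containing $Q$. For each interior vertex $Q$ of $E$ we have $Q\in Int(\omega_Q)$ by construction, and since $E$ is a connected discrete subdomain, the hypotheses of Theorem~\ref{thm:DMP} on the triangulation (restricted to $E$) are met. Thus it suffices to verify that $\op{S}^{\omega_Q}_h$ satisfies sDMP-A for each such $Q$, and for this we use the matrix criterion in Lemma~\ref{lma:matformsdmpA}: with $\alpha=\{Q\}$ the single interior vertex of $\omega_Q$ and $\beta$ its boundary vertices, we must check $\mat{A}^{\omega_Q}_{\alpha\alpha}{}^{-1}>\mat{0}$ and $\mat{A}^{\omega_Q}_{\alpha\alpha}{}^{-1}\mat{A}^{\omega_Q}_{\alpha\beta}<\mat{0}$.

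Because $\alpha$ is a singleton, $\mat{A}^{\omega_Q}_{\alpha\alpha}$ is the $1\times1$ matrix $a_{\omega_Q}(\varphi_Q,\varphi_Q)$, which is strictly positive (the local bilinear form is coercive on $V_0^h(\overline{\omega_Q})=\mathrm{span}\{\varphi_Q\}$ by the uniform ellipticity~\eqref{eq:ellop}), so its inverse is a positive scalar and the first condition holds. For the second condition, the entries of $\mat{A}^{\omega_Q}_{\alpha\beta}$ are $a_{\omega_Q}(\varphi_{P_k},\varphi_Q)=\mat{A}_{Q k}$ over boundary vertices $P_k$ of $\omega_Q$; here I use that the global stiffness entry $\mat{A}_{ij}$ equals its local counterpart $a_{\omega_Q}(\varphi_j,\varphi_i)$ whenever $\mathrm{supp}(\varphi_i)\cap\mathrm{supp}(\varphi_j)\subseteq\overline{\omega_Q}$, which holds when $i=Q$. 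Every boundary vertex $P_k$ of $\omega_Q$ is adjacent to $Q$ (since $\omega_Q$ is a vertex patch), and $Q$ is interior, so by hypothesis $\mat{A}_{Q k}<0$ for all such $k$. Hence $\mat{A}^{\omega_Q}_{\alpha\beta}<\mat{0}$ (strictly, entrywise), and multiplying by the positive scalar $\mat{A}^{\omega_Q}_{\alpha\alpha}{}^{-1}$ preserves strict negativity, giving the second condition.

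With both matrix conditions verified, Lemma~\ref{lma:matformsdmpA} gives that $\op{S}^{\omega_Q}_h$ satisfies sDMP-A. Since $Int(E)=\bigcup_{Q} Int(\omega_Q)$ as $Q$ ranges over the interior vertices of $E$ (every interior point of $E$ lies in the open star of some interior vertex, and the patch $\omega_Q$ of an interior vertex is contained in $E$), Corollary~\ref{cor:DMP} with $X=A$ applies to the triangulation $\op{T}_h$ restricted to $E$, yielding that $\op{S}^E_h$ satisfies sDMP-A. The argument is uniform in $E$, so this holds for every connected discrete subdomain.

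I expect the only genuinely delicate point to be the bookkeeping that localizes the global stiffness matrix to the patch—namely confirming that $\mat{A}_{Qk}$ computed on all of $D$ coincides with the local entry on $\omega_Q$ (true because $\mathrm{supp}(\varphi_Q)=\overline{\omega_Q}$, so the integral defining $a_D(\varphi_{P_k},\varphi_Q)$ is supported in $\omega_Q$) and that the boundary vertices of $\omega_Q$ are exactly the vertices adjacent to $Q$, so the sign hypothesis indeed applies to all of them. Everything else is routine: the $1\times1$ inverse is trivially positive, and the covering $Int(E)=\bigcup_Q Int(\omega_Q)$ is standard for $\op{P}_1$ patches.
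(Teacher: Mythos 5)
Your proposal is correct and follows essentially the same route as the paper: the paper also takes the star $E_i=\bigcup\{T\in\op{T}_h : P_i\in T\}$ around each interior vertex, observes that with $\alpha=\{i\}$ and $\beta$ the adjacent (boundary) vertices the conditions of Lemma~\ref{lma:matformsdmpA} hold trivially under the sign hypothesis, and concludes via Corollary~\ref{cor:DMP}. Your extra bookkeeping (coercivity of the $1\times1$ block, agreement of global and patch-local stiffness entries since $\mathrm{supp}(\varphi_Q)=\overline{\omega_Q}$) is exactly what the paper summarizes as the conditions being ``satisfied in a trivial manner.''
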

Note that this is the closest statement to the continuous case, where the maximum principle is satisfied on every
qualifying subdomain.
\begin{proof}
Let $E\subseteq D$ a \emph{connected discrete subdomain}. For every vertex $P_i$ interior to $E$ 
we consider the set $E_i = \bigcup \{T\in\op{T}_h\  :\ P_i\in T\}$, which we call the \emph{star} around $P_i$.
The boundary vertices of $E_i$ are the adjacent vertices of $P_i$, which is the only interior node of $E_i$. 
In the terminology of Lemma~\ref{lma:matformsdmpA}, $\alpha = \{i\}$ and {\color{black}$\beta=\{j\ :\ \mat{A}_{ij}<0\}$}, and the 
conditions~\eqref{eq:matformsdmpA} are satisfied in a trivial manner. Hence, $\op{S}^{E_i}_h$ satisfies sDMP-A.
Corollary~\ref{cor:DMP} implies that $\op{S}^{E}_h$ satisfies sDMP-A.
\end{proof}
% classical DMP - corollary

%\color{red}
\subsection{The angle condition}
\label{sec:angleclt}

For $\op{P}_1$ elements, the condition $\mat{A}_{ij}<0$ for adjacent vertices $P_i$ 
and $P_j$ is known as the ``angle condition,'' and it is
\color{black}
the backbone of the majority of results regarding DMPs, not just for $\op{P}_1$ elements, but also for 
$\op{Q}_1$~\cite{MR738731, MR2392463, korotov2010comparison}, $\op{P}_2$~\cite{MR632125}, and higher 
elements~\cite{MR2607805, MR2520861}.
The root of the name lies in the formula for the entries in the stiffness matrix for the Laplacian 
($a_{ji} = \delta_{ij}$ in~\eqref{eq:slcontdef}); 
If $P_i$ and $P_j$ are adjacent vertices, denote by $\op{T}_1$ and $\op{T}_2$ the triangles
bordered by the edge $e=\overline{P_1 P_2}$, and let $\theta_1, \theta_2$ be the angles opposite the edge 
$e$ in each of $\op{T}_1$ and $\op{T}_2$, respectively (see Fig.~\ref{fig:triangle_formula}, left). 
Cf. Lemma~A.1. in~\cite{MR2085400}, 
\begin{equation}
\label{eq:formula2triangle}
\mat{A}_{ij} = a_D(\varphi_j,\varphi_i) = {\color{black}\int_{\op{T}_1 \cup \op{T}_2} \nabla \varphi_i \cdot \nabla \varphi_j} = 
-\frac{\sin (\theta_1+\theta_2)}{2 \sin \theta_1 \sin \theta_2}.
\end{equation}
Thus we arrive at the following {\bf angle condition}:
\begin{equation}
\label{eq:anglecond}
\mat{A}_{ij} < 0\ \ \ \ \mathrm{iff}\ \ \ \  \theta_1+\theta_2 < \pi.
\end{equation}
We should also note the weaker condition: $\mat{A}_{ij} \le 0$ iff $\theta_1+\theta_2 \le \pi$. Oftentimes the angle condition
is remembered as ``all angles must be acute'', a hypothesis that certainly implies~\eqref{eq:anglecond}.
\begin{figure}[!h]
\begin{center}
        \includegraphics[width=5.0in]{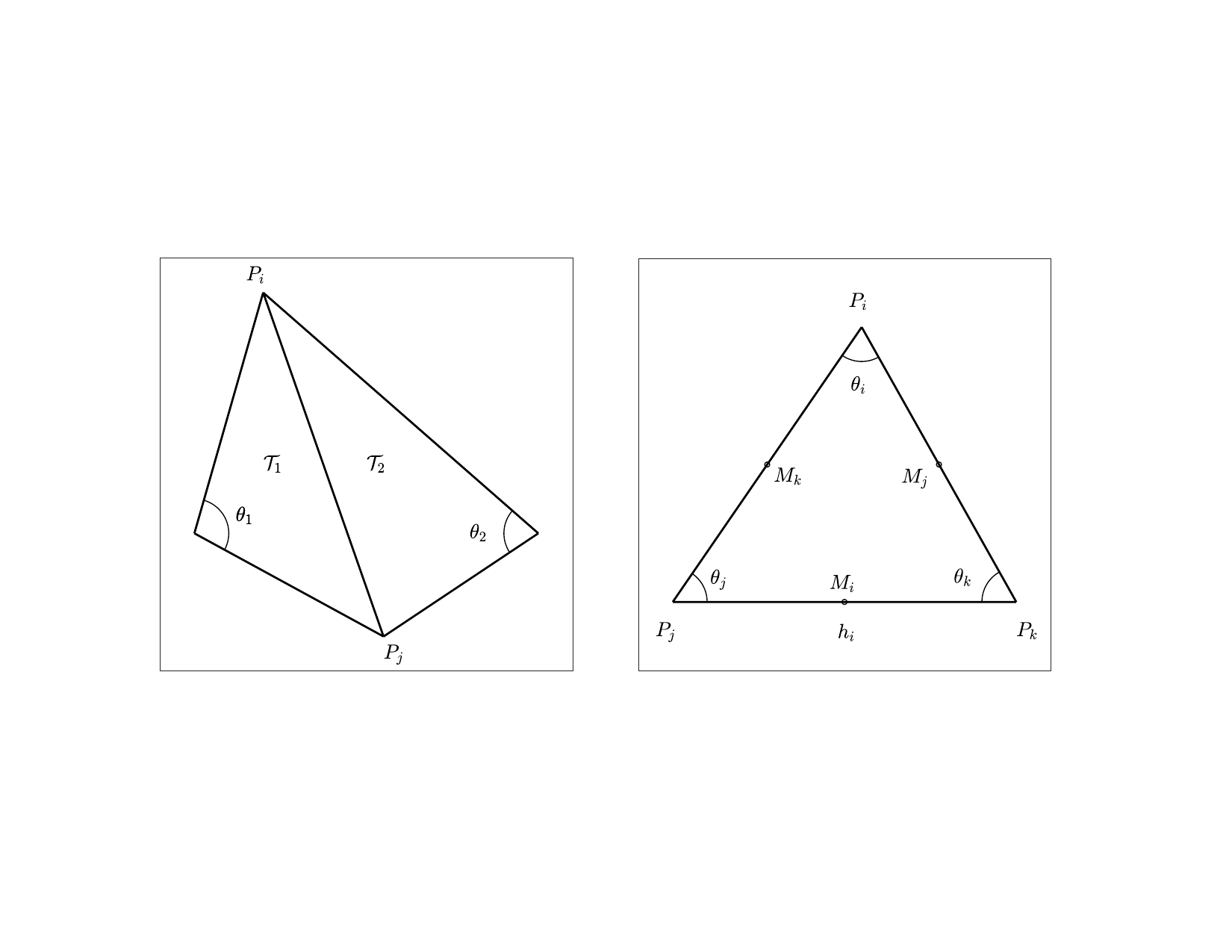}
\end{center}
\caption{Triangle elements entering the formulas for stiffness and mass matrices.}
\label{fig:triangle_formula}
\end{figure}
We also recall from (see~\cite{MR2085400}) the formulas for the diagonal entries of the stiffness matrix $\mat{A}$.
Consider a mesh triangle $\op{T}$ with vertices $P_i, P_j, P_k$ and  angles $\theta_i, \theta_j, \theta_k$ (refer to Fig.~\ref{fig:triangle_formula}, right),
and let $\varphi_i$ be the basis function associated with $P_i$. The contribution to $\mat{A}_{ii}$ from $\op{T}$ is given by
\begin{equation}
\label{eq:formula1triangle}
\int_{\op{T}}|\nabla \varphi_i|^2 = \frac{\sin \theta_i}{2 \sin \theta_j \sin \theta_k}.
\end{equation}
Fo the mass matrix we use the the fact that the cubature rule 
\begin{eqnarray*}
Q(f) = \frac{\mu(\op{T})}{3} \sum_{\ell\in\{i,j,k\}} f(M_{\ell})
\end{eqnarray*} 
is exact for quadratics, where $M_i, M_j, M_k$ are the edge midpoints (see Fig.~\ref{fig:triangle_formula}, right), {\color{black}
and $\mu(\op{T})$ denotes the area of $\op{T}$}.
The contributions of $\op{T}$ to the $i^{\mathrm{th}}$ row of the mass matrix are
\begin{eqnarray}
\label{eq:formula3triangle}
\int_{\op{T}}\varphi_i^2 &=& \frac{\mu(\op{T})}{3} \sum_{\ell\in\{i,j,k\}}\varphi^2_i(M_{\ell})  = \frac{\mu(\op{T})}{6} = 
\frac{h_i^2}{12(\cot \theta_j + \cot \theta_k)},\\
\label{eq:formula4triangle}
\int_{\op{T}}\varphi_i \varphi_j &=& \frac{\mu(\op{T})}{3} \sum_{\ell\in\{i,j,k\}}
\varphi_i(M_{\ell})\varphi_j(M_{\ell}) = \frac{\mu(\op{T})}{12} = \frac{h_k^2}{24(\cot \theta_i + \cot \theta_j)},
\end{eqnarray}
where we use the area formula (and permutations)
$$
\mu(\op{T}) = \frac{h_i^2}{2(\cot \theta_j + \cot \theta_k)}.
$$

Note that there are standard meshes of interest containing edges $\overline{P_i P_j}$ for which $\mat{A}_{ij} = 0$, where wDMP-A is known to hold.
In Example~\ref{ex:threelinesmesh} we analyze such a mesh which can be regarded as a limiting case of a mesh that satisfies sDMP-A (and sDMP-B).
%This simple example shows  differences between wDMP-A and sDMP-A.

%\color{red}
\subsection{Defects related to boundary mesh properties}
\label{sec:nonmonotedgebdry}

We first consider defects associated to mesh properties at the boundary of the domain.
To begin with, consider a mesh with a triangle $T$ having two edges on the boundary,
which meet at the boundary vertex $P$.
Let $V_0^h(\overline{D})$ denote all piecewise linear functions that vanish on $\partial D$.
A function in $\psi\in V^h(\overline{D})$ that satisfies
$$
a_D(\psi,v)+(c(\cdot,\psi),v)_D=0
$$
for all $v\in V_0^h(\overline{D})$ is often referred to as a discrete harmonic 
function \cite{heilbronn1949discrete,MR551291}.

\begin{lemma}\label{lem:surpriz}
Let $\phi$ denote the Lagrange basis function that is 1 at $P$ and zero at all other vertices.
Consider the variational problem defined in \eqref{eq:weakellcont}.
Then 
$$
a_D(\phi,v)+(c(\cdot,\phi),v)_D=0
$$
for all $v\in V_0^h(\overline{D})$.
Thus $\phi$ is discrete harmonic but supported in $T$, and
$u_0$ defined by \eqref{eq:weakelldisc} is {\color{black}identically} zero when $f\equiv 0$.
%assuming $c\equiv 0$.
\end{lemma}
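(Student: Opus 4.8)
The plan is to exploit the extreme locality of the basis function $\phi$ associated with the corner vertex $P$. Since $T$ has two edges on $\partial D$ meeting at $P$, the only mesh vertices belonging to $T$ are $P$ and the two other vertices of $T$, both of which lie on $\partial D$. Consequently $\phi$, which is $1$ at $P$ and $0$ at all other vertices, is supported entirely in $T$: it vanishes identically outside $T$. Now take any $v\in V_0^h(\overline D)$. Because $v|_{\partial D}=0$, the nodal values of $v$ at all three vertices of $T$ are zero (each of them is on $\partial D$), so $v$ restricted to $T$ is the zero function. Hence $\nabla v$ and $v$ both vanish on $T$.

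First I would write out the bilinear form restricted to the support of $\phi$:
\begin{equation*}
a_D(\phi,v)+(c(\cdot,\phi),v)_D=\sum_{i,j=1}^d\int_{T}a_{ij}\,\partial_i\phi\,\partial_j v+\int_{T}c(\cdot,\phi)\,v,
\end{equation*}
where I have used $\mathrm{supp}(\phi)\subseteq T$ to reduce every integral over $D$ to an integral over $T$. Since $v\equiv 0$ on $T$ (and so $\partial_j v\equiv 0$ on $T$), every term in the last expression vanishes pointwise, giving $a_D(\phi,v)+(c(\cdot,\phi),v)_D=0$ for all $v\in V_0^h(\overline D)$. This is exactly the statement that $\phi$ is discrete harmonic in the sense of the weak equation \eqref{eq:weakellcont}, and it is supported in $T$ by construction.

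For the last assertion, suppose $f\equiv 0$ and let $u_0=u_h^0\in V_0^h(\overline D)$ solve \eqref{eq:weakelldisc} with $u_h^b$ taken so that only the corner datum is relevant; more precisely, by Lemma~\ref{lma:restriction} and the variational formulation, when $f\equiv 0$ the function $u_h^0=0$ satisfies \eqref{eq:weakelldisc} because $a_D(u_h^b,v)+(c(\cdot,u_h^b),v)_D$ already vanishes for all $v\in V_0^h(\overline D)$ whenever $u_h^b$ is a multiple of $\phi$ (this is precisely what we just proved, noting $c(\cdot,0)=0$ so the reaction term is unaffected by adding $u_h^0=0$). By uniqueness of the finite element solution (coercivity of $a_D$ plus monotonicity of $c$), $u_h^0\equiv 0$.

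\textbf{Main obstacle.} The computation itself is essentially trivial once the support statement is in hand; the only point requiring care is the bookkeeping around the boundary/interior split. The subtlety is making sure that \emph{all} vertices of $T$ other than $P$ genuinely lie on $\partial D$ — this is where the hypothesis that $T$ has \emph{two} edges on the boundary is used — so that any $v\in V_0^h(\overline D)$ is forced to vanish at every vertex of $T$, hence on all of $T$. A secondary point is phrasing the final ``$u_0\equiv 0$'' claim correctly: one must be slightly careful about what boundary data is being imposed and invoke well-posedness of \eqref{eq:weakelldisc} (uniqueness), which holds under the standing assumptions \eqref{eq:Cnondecreasing}–\eqref{eq:cincrease} on $c$ together with the ellipticity \eqref{eq:ellop}.
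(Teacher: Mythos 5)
Your proposal is correct and follows essentially the same route as the paper: both arguments rest on the observation that $\phi$ is supported in $T$ while every $v\in V_0^h(\overline D)$ vanishes identically on $T$ (all three vertices of $T$ lie on $\partial D$), so the supports meet only in a set of measure zero and every integral vanishes. Your additional discussion of the final claim ($u_h^0\equiv 0$ when $f\equiv 0$ and the boundary datum is a multiple of $\phi$, using $c(\cdot,0)=0$ and uniqueness) is a correct elaboration of what the paper leaves implicit.
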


\begin{proof}
For $v\in V_0^h(\overline{D})$, the supports of $v$ and $\phi$ intersect in a set of measure zero,
the third edge of $T$.
\end{proof}

\begin{example}
\label{ex:threelinesmesh}
Let the two-dimensional vectors be $\mat{v}_{\theta}=[\cos \theta, \sin \theta]^T$,
so that $\mat{v}_0=[1, 0]^T$.
For $\pi/2 \le \theta< \pi$, define  the rhombus
\color{black}
$D(\theta) = \{ s\: \mat{v}_0 + t\: \mat{v}_{\theta}\ :\ 0 < s, t\ <1\}$.
We consider the Poisson equation on $D(\theta)$ 
($a_{ji} = \delta_{ij}$ and $c \equiv 0$ in~\eqref{eq:slcontdef}).
To obtain a triangular mesh we  partition $D(\theta)$ in $n\times n$ identical rhombuses, which we further divide into triangles
along their short diagonal, as pictured in Fig.~\ref{fig:structuredmesh} -- right. Consider a standard lexicographic numbering of the 
vertices, numbered from $1$ to  $N=(n+1)\times (n+1)$. The case $ \theta = \pi/2$ leads to 
$D(\theta) = [0,1]\times [0,1]$ with the classical  three-line mesh discretization, as shown in Fig.~\ref{fig:structuredmesh} -- left. \\
{\bf Case 1: $\theta >\pi/2$} (Fig.~\ref{fig:structuredmesh} -- right). We first note that the boundary values imposed at the vertices
$P_{n+1} = (1,0)$ and $P_k = (\cos \theta,\sin\theta)$, with $k = 1+ n (n+1)$, do not influence the solution in the interior; that is, 
 $u^0_h$ is the same regardless of the values $u^b_h(P_{n+1})$ and $u^b_h(P_k)$. This shows $S^D_h$ does not satisfy sDMP-A, since 
attaining a minimum for $u_h^0$ in the interior does not imply $u_h=u^0_h+u^b_h$ is constant. However, if 
${\tilde{D}}$ denotes the domain $D$ from which we remove the  triangles  containing the vertices $P_{n+1}$ and $P_k$ 
(shaded in Fig.~\ref{fig:structuredmesh} -- right), then $S^{\tilde{D}}_h$ satisfies sDMP-A, since all the angles are acute, and every
boundary node is connected to an internal node. {\color{black}Furthermore}, if we now add back the triangles to the domain, then for any
nonnegative $f$ and discrete boundary function $u^b_h\in V^h(\partial D)$,  the interior part of the solution $u^0_h$ satisfies
\begin{eqnarray*}
\label{eq:wDMPrhombus}
\min_{P\in Int(D)} u^0_h(P) & =& \min_{P\in Int(\tilde{D})} u^0_h(P) \ge  \min_{P\in \partial\tilde{D}} u^b_h(P)\\
& \ge& \min 
\{\min_{P\in \partial\tilde{D}} u^b_h(P), u^b_h(P_{n+1}), u^b_h(P_k)\} = \min_{P\in \partial {D}} u^b_h(P).
\end{eqnarray*}
This shows that $S^{{D}}_h$ satisfies wDMP-A. We will show in Section~\ref{sec:semilinear} that $S^{\tilde{D}}_h$ also 
satisfies sDMP-B when $c$ is non-zero.
\\
{\bf Case 2: $\theta = \pi/2$} (Fig.~\ref{fig:structuredmesh} -- left).
In this case~\eqref{eq:formula2triangle} implies that $\mat{A}_{ij}=0$ for every edge $\overline{P_i P_j}$ parallel to the
line $y=x$.
This shows that the boundary values at all the four corners of $D$ do not influence the computed (interior) solution $u^0_h$,
so {\color{black}$S^{{D}}_h$ does not satisfy sDMP-A}. In this case even $S^{\tilde{D}}_h$ does not satisfy sDMP-A due to the values
at $P_1$ and $P_N$ not influencing $u^0_h$. This does not change by removing the triangles containing $P_1$ and $P_N$, 
as doing so will give rise to new ``disconnected'' boundary vertices. However, Theorem~\ref{thm:wdmpa} implies that
$S^{\tilde{D}}_h$ satisfies wDMP-A, as we choose for each interior
point $P_i$ the star $E_i$ as in the proof of Theorem~\ref{thm:classicalDMP}. 
%Indeed, if we denote by $\alpha$ the set of interior vertices (for both $D$ and $\tilde{D}$),
%then $\mat{A}_{\alpha,\alpha}$ is an irreducible Stieltjes matrix, i.e., it is symmetric positive definite, and
Note that the hypotheses of Theorem~\ref{thm:wdmpa} are satisfied, because we have
\begin{equation}
\label{eq:nonposent}
{\color{black}\mat{A}_{ij}\le 0},\ \ \mathrm{for\  all}\ \  i\ne j.
\end{equation} 
%Cf.~\cite{}[Varga] $\mat{A}^{-1}_{\alpha,\alpha} > \mat{0}$. In fact this coincides with 
%the finite difference matrix for the Poisson equation~\cite{}[Saad, Anne]. Furthermore, the condition~\eqref{eq:nonposent}
%also shows that the second hypothesis in~\eqref{eq:matformWdmpAthm} is satisfied for the star $E_i$ around $P_i$ for each interior
%point $P_i$, so all the hypotheses of Theorem~\ref{thm:wdmpa} are satisfied.
The same calculation as in Case 1 shows that also $S^{{D}}_h$ satisfies wDMP-A.
%The classical argument for showing  wDMP-A involves invoking the fact that the stiffness matrix corresponding to the interior nodes of 
%$D$ is a Stieltjes matrix.
\begin{figure}[!h]
\begin{center}
        \includegraphics[width=4.5in]{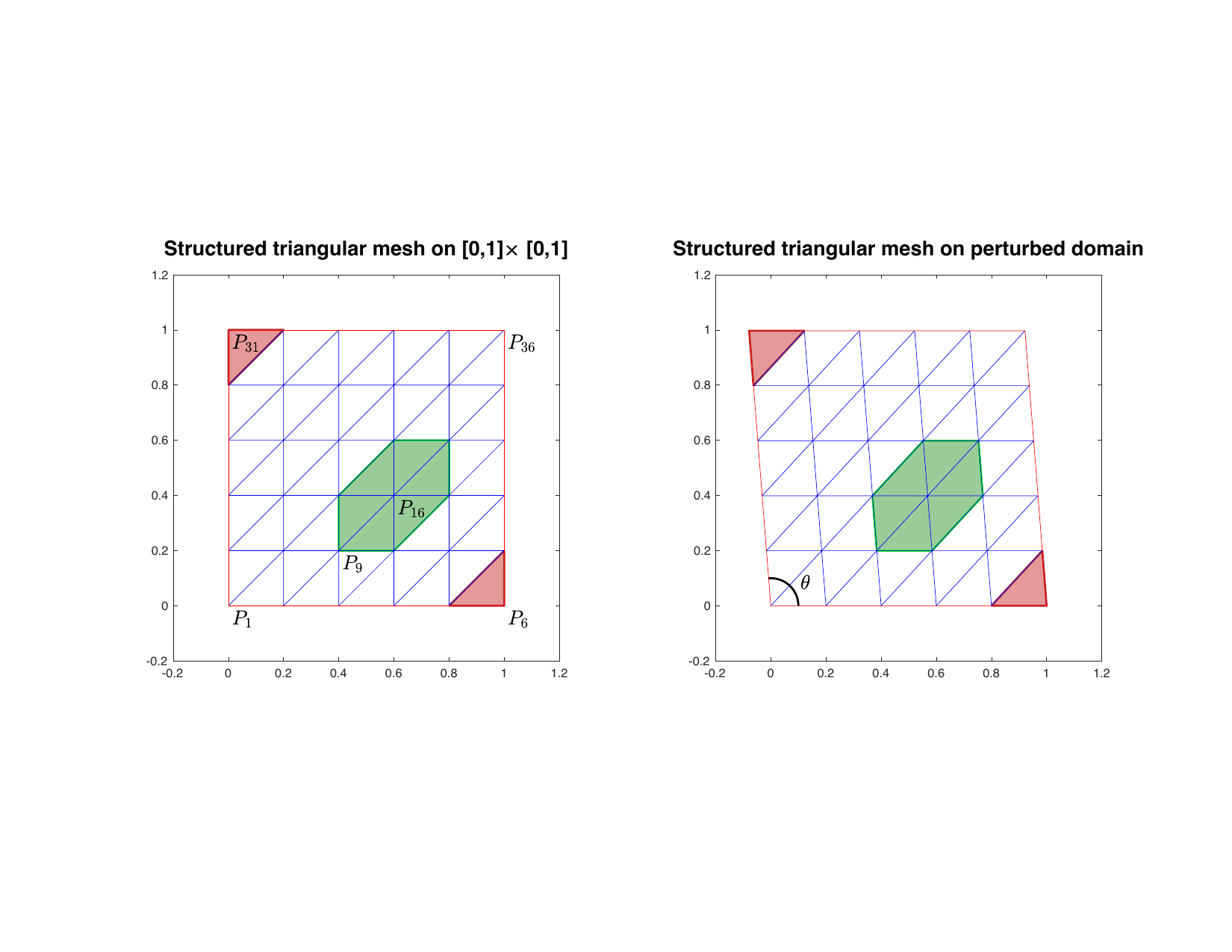}
\end{center}
\caption{\emph{Left:} The Poisson solution operator $S^D_h$ satisfies wDMP-A
on the classical three-line mesh on $D = [0,1]\times [0,1]$ (here $n=5$), but does not satisfy sDMP-A; 
entries in the stiffness matrix corresponding to
edges like $\overline{P_9, P_{16}}$ are zero. \emph{Right:} If $\tilde{D}$ is obtained by removing from $D$ the triangles containing
$P_6$ and $P_{31}$, then sDMP-A holds on $\tilde{D}$, and wDMP-A holds on ${D}$.}
\label{fig:structuredmesh}
%; the boundary vertex $P_9$ of the star $E_{16}$ (shaded) is not algebraically connected to $P_{16}$ in the sense that
%$\mat{A}_{9\  16}=0$.}
\end{figure}
\end{example}
\color{black}

%\input{tempsections.tex}
%\input{semilinear_pos_mesh.tex}
%\input{example1.tex}

%\section{Meshes violating the angle condition}
%\label{sec:examples}

%%%%%
% Rhombus example
\section{
%\color{red}
Defects related to interior mesh properties}
%\subsection{First case study: sDMP for meshes with ``defects''}
\label{ssec:nonmonotedge}

\color{black}
In this section we focus on the Poisson equation with a constant reaction rate $\tilde{c}$; thus, the reaction
matrix has the form $\mat{C} = \tilde{c}\: \mat{M}$. We present 
a set of examples where the angle condition~\eqref{eq:anglecond} (which still refers to the stiffness matrix)
fails for certain edges, and yet we can prove that the sDMP-A and sDMP-B hold, under appropriate assumptions.

\subsection{A class of meshes with defects}
\label{ssec:defects}
For the purpose of this section we call edges where~\eqref{eq:anglecond} fails ``defects''.
The domain is related to that in Example~\ref{ex:threelinesmesh}
%\color{red}
via a 90 degree rotation, 
\color{black}
namely we consider a rhombus $D(\theta)$ centered at 
the origin, with the long diagonal lying on the $x$-axis, and $0<\theta < \pi/2$ representing the acute
angle of the rhombus.
We partition $D(\theta)$ into $n\times n$ identical rhombuses, which we further divide 
either along the short or the long diagonal, according to a set of rules described below. First,
all the rhombuses that contain a boundary edge are divided along their short diagonal, as
in Example~\ref{ex:threelinesmesh}. Second, we eliminate the two corner triangles that cross the $x$-axis
(see Figures~\ref{fig:G1mesh}--\ref{fig:G2meshdefect}), and we denote by $\tilde{D}(\theta)$ the remaining
domain; note that $\tilde{D}(\theta)$ changes as the mesh is refined.
Third, the defects are selected from a finite library described below, and they need to be separated by triangles 
in which all the edges satisfy the angle condition~\eqref{eq:anglecond}. Our presentation does not
aim to exhaust all the possible patterns; instead, we want to showcase the usage  our results on some nontrivial examples.

\begin{figure}[!h]
\begin{center}
        \includegraphics[width=3in]{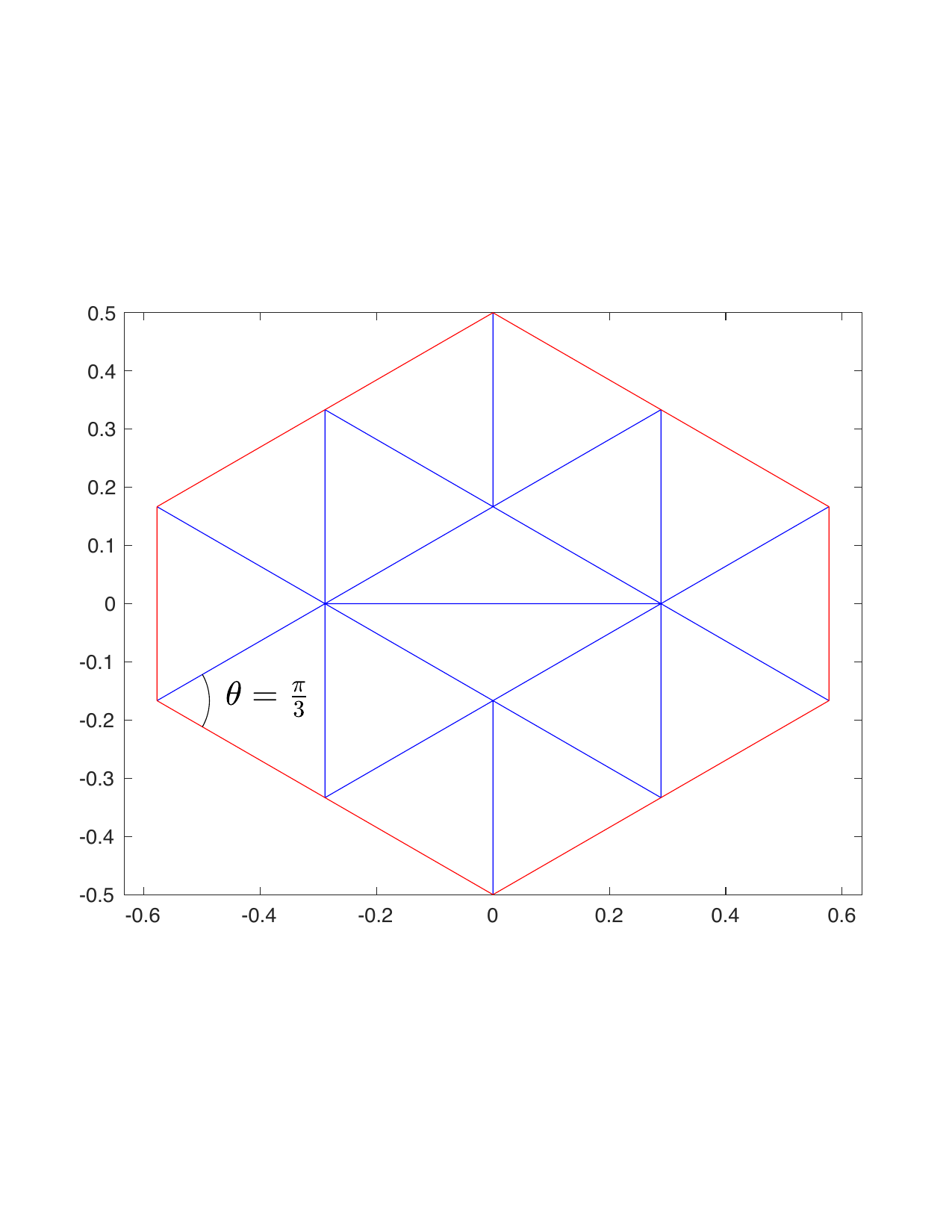}
\end{center}
\caption{The mesh $G_1(\pi/3)$  contains 1 edge violating the angle condition, but $\mat{A}^{-1}_1 >\mat{0}$ 
(verified numerically -- see also Fig.~\ref{fig:minGreensfunctions}).}
\label{fig:G1mesh}
\end{figure}
\begin{figure}[!h]
\begin{center}
        \includegraphics[width=3in]{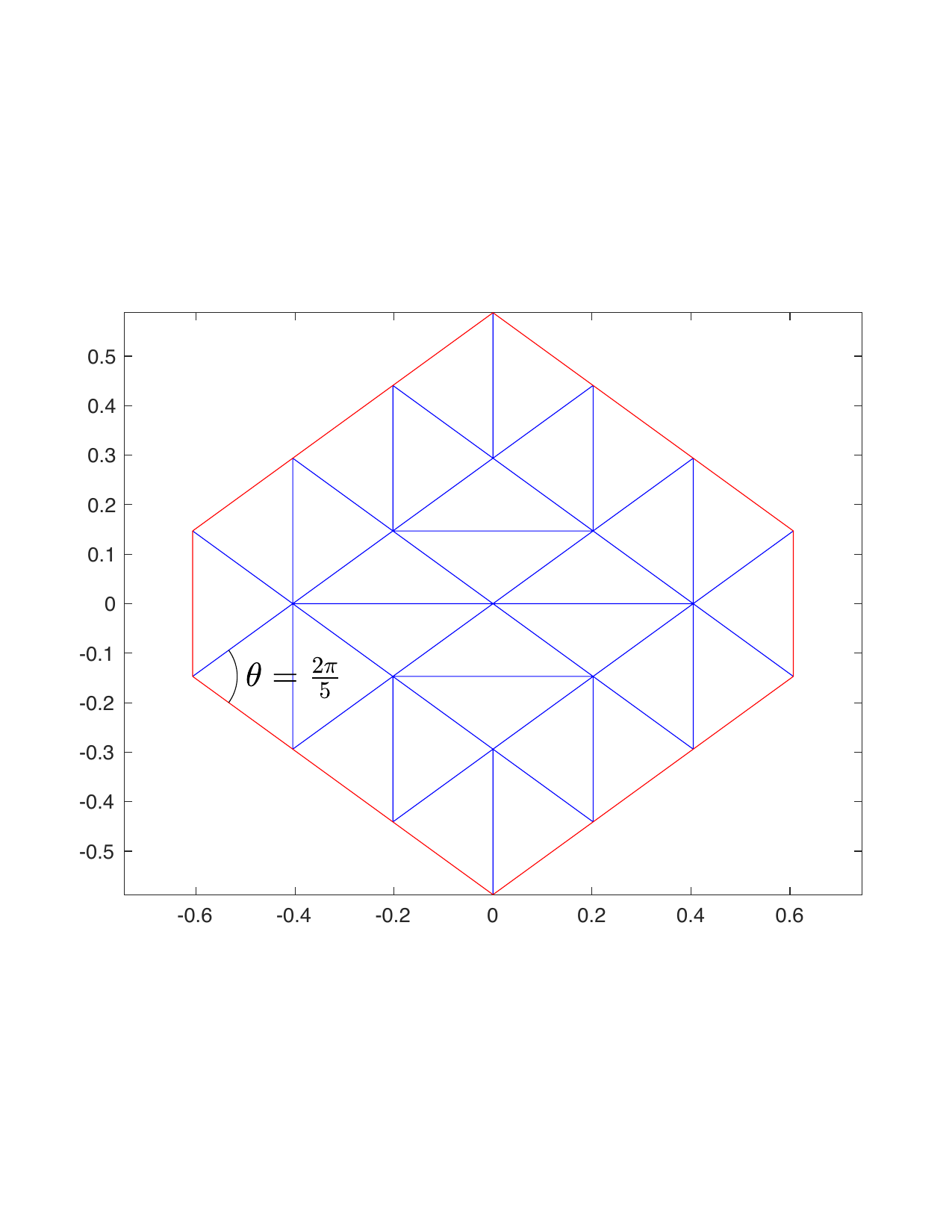}
\end{center}
\caption{The mesh $G_2(2\pi/5)$  contains 4 edges violating the angle condition, but $\mat{A}^{-1}_2 >\mat{0}$ 
(verified numerically -- see also Fig.~\ref{fig:minGreensfunctions}).}
\label{fig:G2mesh}
\end{figure}

\begin{example}
\label{ex:originates672072}

This example originates in~\cite{MR672072}; in the context of homogeneous Dirichlet boundary conditions, 
it is shown that for a certain $\varepsilon>0$ and 
$\pi/2-\varepsilon <\theta < \pi/2$, the discrete Green's function for $D(\theta)$
 cut into $n\times n$ rhombuses and along their long diagonal is positive, independent of 
the number of subdivisions $n$. Because in this setup the edges connecting to the boundary vertices
are defects, the wDMP is not satisfied. Variations on this example are further discussed in~\cite{MR2085400, MR3614014}.

In our case, the defects are grouped into ``mini'' versions of the domain ${D}(\theta)$, namely, they are
rhombuses made of $k\times k$ elemental (smallest) rhombuses, all of which are divided along their {\bf long} 
diagonal. Furthermore, we add  to these domains a one-layer lining
of elemental rhombuses that are divided along their short diagonal, and we eliminate the two extremal (on the $x$-axis) corner triangles. 
We call this mesh $G_k(\theta)$, and we omit $\theta$ when not necessary. 
In Fig.~\ref{fig:G1mesh} we show $G_1(\pi/3)$, and in Fig.~\ref{fig:G2mesh} we show $G_2(2\pi/5)$. Note that $G_k$ contains
$k^2$ defective edges. We should point out that defects of these types may arise naturally in mesh refinement, e.g.,  
when adding a midpoint to an edge and cutting the triangles adjacent along the medians {\color{black} -- see Fig.~\ref{fig:triangle_mariam}}. 
One can spot several 
such localized defects by carefully examining the refined meshes in Figure 6 from~\cite{MR4727060}. 

\begin{figure}[htb]
\centerline{\includegraphics[width=2in]{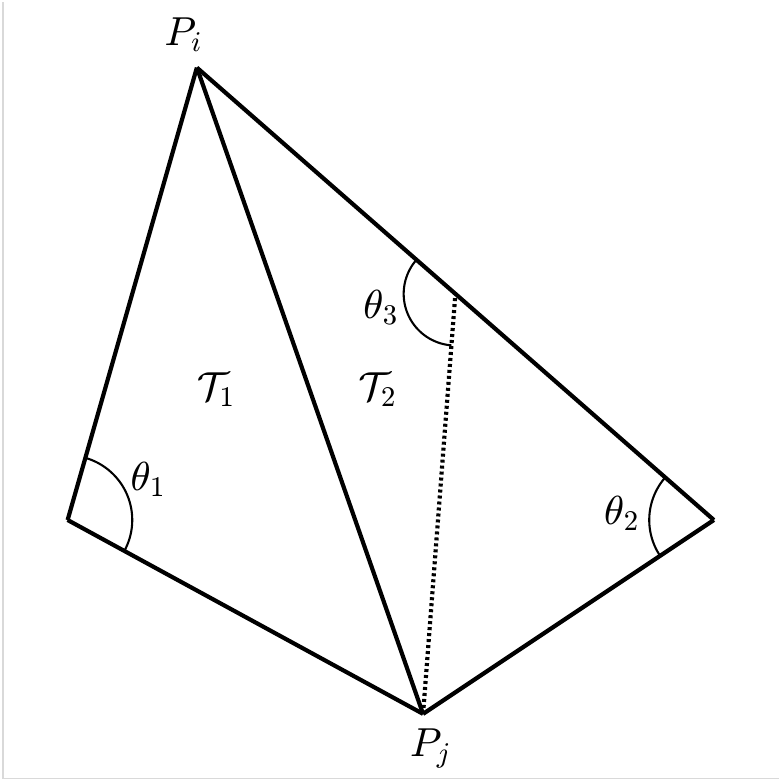}}
\caption{{\color{black}The angle condition holds for the original partition ${\op T}_1\cup {\op T}_2$, because 
  $\theta_1 + \theta_2 < \pi$. However, the bisection of an edge leads to $\theta_1 + \theta_3 > \pi$, implying
  that the angle condition is violated, as this causes $A_{ij}>0$ for the edge connecting $P_i$ and $P_j$.}
}
\label{fig:triangle_mariam}
\end{figure}

We now discuss the positivity of the discrete Green's function, i.e., the inverse of the stiffness matrix $\mat{A}_k(\theta)$
associated with the interior nodes of the mesh $G_k(\theta)$.
\begin{lemma}
\label{lma:invGreenposGk}
Denote by $\mat{A}_k(\theta)$ the stiffness matrix associated with the interior nodes of the mesh $G_k(\theta)$.
Then  for each $k\in \N$ there exists $0<\theta_k <\pi/2$ so that
\begin{equation}
\label{eq:invGreenposGk}
\mat{A}^{-1}_k(\theta) > \mat{0},\ \ \forall \ \ \theta_k < \theta \le \pi/2.
\end{equation}
\end{lemma}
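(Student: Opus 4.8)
The plan is to prove the statement by a continuity/perturbation argument anchored at the endpoint $\theta=\pi/2$, where the mesh $G_k(\pi/2)$ degenerates into (essentially) a structured mesh whose defective ``long-diagonal'' block becomes the classical three-line mesh, and where the relevant submatrices are well understood. First I would fix $k$ and analyze $\mat{A}_k(\pi/2)$: at $\theta=\pi/2$ the rhombus is a square, the $k\times k$ long-diagonal block is exactly a three-line discretization of a square, and the surrounding short-diagonal lining consists of right-isoceles triangles, so all angles away from the defective edges are nonobtuse and the defective edges themselves carry $\mat{A}_{ij}=0$ by \eqref{eq:formula2triangle} (since $\theta_1+\theta_2=\pi$ there). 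Thus $\mat{A}_k(\pi/2)$ is an $M$-matrix (diagonally dominant, nonpositive off-diagonals, irreducible because the interior-vertex graph of $G_k$ is connected), hence $\mat{A}_k(\pi/2)^{-1}>\mat{0}$ strictly by the standard Perron–Frobenius/$M$-matrix positivity theorem for irreducible $M$-matrices.

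Next, I would invoke continuity: the entries of $\mat{A}_k(\theta)$ depend continuously (indeed real-analytically) on $\theta$ on a neighborhood of $\pi/2$, via the explicit cotangent formulas \eqref{eq:formula2triangle}, \eqref{eq:formula1triangle} for the stiffness entries. The matrix $\mat{A}_k(\theta)$ remains symmetric positive definite for all $\theta\in(0,\pi/2]$ (it is a genuine stiffness matrix of a nondegenerate mesh on the interior nodes), so it is invertible throughout, and matrix inversion is continuous on the set of invertible matrices. Since every entry of $\mat{A}_k(\pi/2)^{-1}$ is strictly positive and there are finitely many entries, there is $\theta_k<\pi/2$ such that $\mat{A}_k(\theta)^{-1}>\mat{0}$ for all $\theta\in(\theta_k,\pi/2]$. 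This is exactly \eqref{eq:invGreenposGk}.

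The main obstacle is establishing strict positivity of $\mat{A}_k(\pi/2)^{-1}$ rigorously rather than just nonnegativity: one must confirm that $\mat{A}_k(\pi/2)$, even with the zero off-diagonal entries on the $k^2$ defective edges and after deleting the two corner triangles, is still irreducible as a matrix (equivalently, that the graph on the interior vertices of $G_k(\pi/2)$ obtained by keeping only edges with \emph{strictly} negative stiffness entry is connected). Because the three-line block, although it has some zero entries along one diagonal direction, still has strictly negative entries along the horizontal and vertical edges, and the short-diagonal lining contributes strictly negative entries too, this connectivity does hold; I would verify it by exhibiting, for any pair of interior vertices, a path using only strictly-negative edges. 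An alternative, cleaner route that sidesteps the irreducibility bookkeeping is to note that \eqref{eq:formula2triangle} gives $\mat{A}_{ij}(\theta)<0$ strictly for \emph{every} interior edge once $\theta<\pi/2$ on the three-line block (the sum of opposite angles drops below $\pi$), while the lining and remaining edges stay strictly negative; combined with diagonal dominance this makes $\mat{A}_k(\theta)$ a strictly diagonally dominant irreducible $M$-matrix for all $\theta<\pi/2$ near $\pi/2$, giving $\mat{A}_k(\theta)^{-1}>\mat{0}$ directly. Either way the quantitative threshold $\theta_k$ is non-explicit and obtained by compactness/continuity, which matches the phrasing ``there exists $\theta_k$'' in the statement and the ``verified numerically'' remarks in the surrounding figures.
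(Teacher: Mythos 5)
Your main argument is correct and is essentially the paper's proof: anchor at $\theta=\pi/2$, where the long-diagonal (defective) edges contribute zero off-diagonal entries and $\mat{A}_k(\pi/2)$ is an irreducible Stieltjes matrix with strictly positive inverse, then conclude by continuity of $\theta\mapsto\mat{A}_k^{-1}(\theta)$ near $\pi/2$. One caveat: your proposed ``alternative, cleaner route'' is wrong. For an elemental rhombus with acute angle $\theta$ cut along its long diagonal, the two angles opposite the cut edge are each $\pi-\theta$, so their sum is $2\pi-2\theta>\pi$ for $\theta<\pi/2$; by \eqref{eq:formula2triangle} these edges have $\mat{A}_{ij}>0$, which is precisely why they are called defects, so $\mat{A}_k(\theta)$ is \emph{not} an $M$-matrix for $\theta<\pi/2$ and the continuity argument cannot be sidestepped.
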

\begin{proof}
The argument lies in the analysis of the limit case, $\theta=\pi/2$. 
A simple calculation using~\eqref{eq:formula2triangle} and~\eqref{eq:formula1triangle} shows  
that $\mat{A} = \mat{A}_k(\pi/2)$ is a scaled version of the matrix resulted from the
standard five-point stencil (finite difference) discretization of the Laplacian on the unit square with zero-boundary conditions:
\begin{eqnarray*}
\mat{A}_{ij} = \left\{
\begin{array}{cll}
4, 3,\  \mathrm{or}\ 2&\ \mathrm{if}\ i=j&\ \mathrm{and}\ P_i\ \ \mathrm{interior, \ side,\ or\ corner\ vertex,\ resp.;}\\
-1,&\ \mathrm{if}\ i\ne j&\ \mathrm{and}\ \overline{P_i P_j}\ \ \mathrm{is\ an\ edge\ with\ slope\ }\pm 1.
\end{array}
\right .
\end{eqnarray*}
%namely the finite element solution $u_h$ satisfies
%\begin{eqnarray}
%\label{eq:5ptstencil}
%4 u_h(x_{i},x_{j}) - u_h(x_{i},x_{j-1}) - u_h(x_{i},x_{j+1}) - u_h(x_{i-1},x_{j}) - u_h(x_{i+1},x_{j}) = h^2 \mat{M} \mat{f}f(x_{i-1},x_{j-1},\ \ \ 
%1\le i, j\le k.
%\end{eqnarray}
%and  with 
%$(k+1)\times (k+1)$ uniformly spaced grid points, of which $(k-1)\times (k-1)$ are in the interior. 
Essentially, all the edges $\overline{P_i P_j}$ in $G_k(\pi/2)$ that {\color{black}are} parallel to 
the $x$- or the $y$-axis (the diagonals of the small squares) yield  $\mat{A}_{ij}=0$. 
It can easily be seen that $\mat{A}_k(\pi/2)$ is a Stieltjes matrix, meaning it is symmetric positive definite, 
satisfies~\eqref{eq:nonposent}, and is irreducible. Cf.~\cite{MR1753713},
\begin{eqnarray*}
%\label{eq:invGreenposGk}
\mat{A}^{-1}_k(\pi/2) > \mat{0}.
\end{eqnarray*}
The result follows simply by continuity of the map $\theta \mapsto \mat{A}^{-1}_k(\theta)$, namely we have 
\begin{eqnarray*}
%\label{eq:invGreenposGk}
\lim_{\theta \to \pi/2} \mat{A}^{-1}_k(\theta)  = \mat{A}^{-1}_k(\pi/2) > \mat{0},
\end{eqnarray*}
which then implies~\eqref{eq:invGreenposGk} for some $\theta_k<\pi/2$.
\end{proof}
In Fig~\ref{fig:minGreensfunctions} we plot the smallest value for $\mat{A}^{-1}_k(\theta)$ (computed numerically), 
for $k = 1, 2, 3, 4$, and we see that is turns positive as 
$\theta\to \pi/2$, thus supporting Lemma~\ref{lma:invGreenposGk}. 
It appears that  $\theta_1 < \theta_2 < \theta_3 < \theta_4$, although this is not essential for our arguments.
\end{example}

\begin{figure}[!h]
\begin{center}
        \includegraphics[width=5.2in]{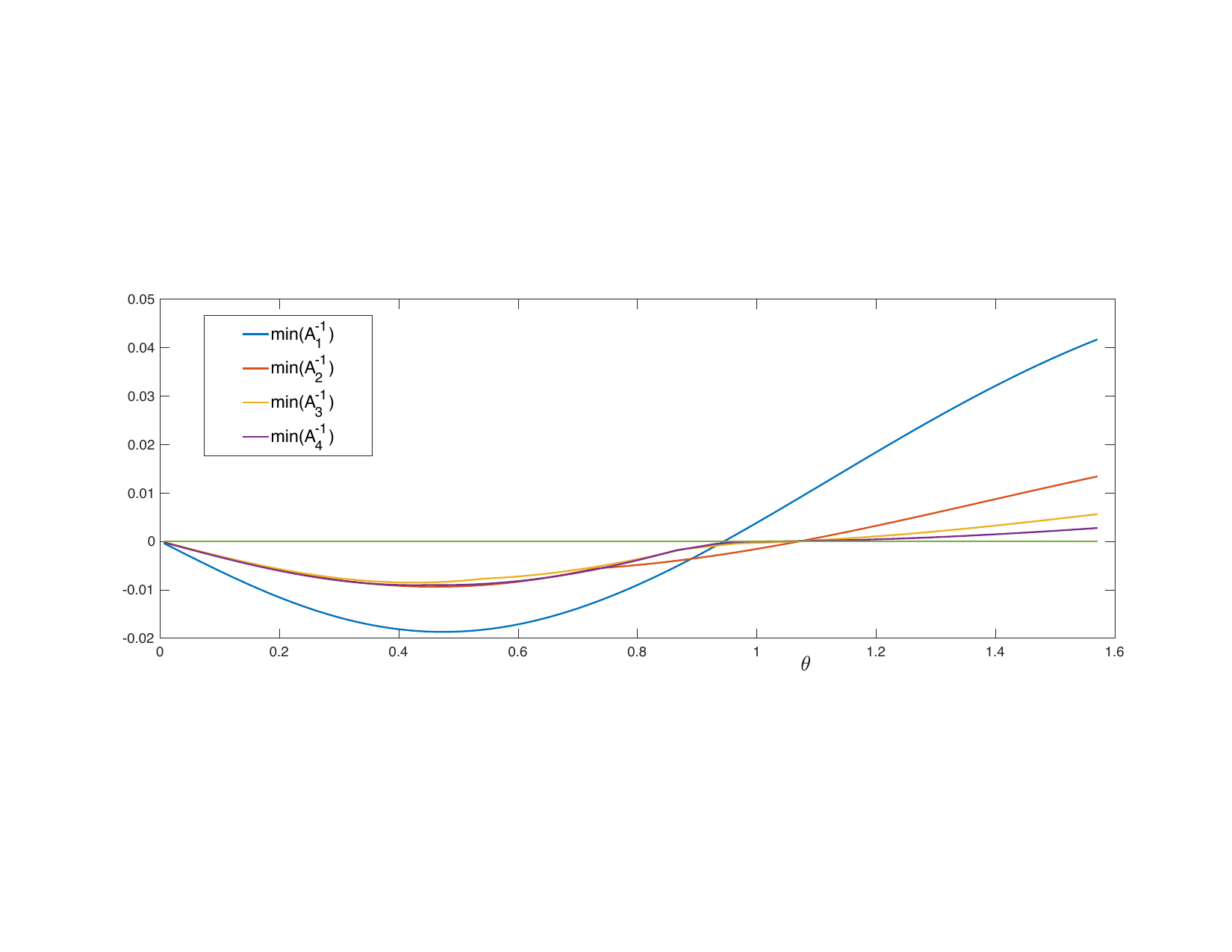}
\end{center}
\caption{{\color{black}The smallest values of $\mat{A}^{-1}_1(\theta),\dots,\mat{A}^{-1}_4(\theta)$ are shown as functions of $\theta$ (on the $x$-axis). 
Note that each of the four functions} {\color{black}has a root $\theta_k <\pi/2$ so that it is positive for $\theta_k< \theta < \pi/2$.}}
\label{fig:minGreensfunctions}
\end{figure}

%\color{red}
\subsection{More complicated meshes}

The next result describes a set of meshes on ${D}(\theta)$ for which various DMPs hold as $h\to 0$.
\color{black}
\begin{theorem}
\label{thm:example1}
Let $k\in \N$ and $\theta$ so that
$\max_{\ell=1}^k\theta_{\ell} < \theta <\frac{\pi}{2}$. Consider a uniform partition of $D=D(\theta)$ in $n\times n$ rhombuses,
with each small rhombus being divided along either its long diagonal or its short diagonal, and let $h$ be the mesh size.
We assume that 
each interior vertex $P$ lies in the interior of a mesh that is a scaled version of $G_{\ell}(\theta)$ for some
$1\le \ell\le k$, { or} the star  around $P$ has only acute angles. (Examples of such meshes are given 
in Figures~\ref{fig:G1meshdefect}-\ref{fig:G2meshdefect}). Let $(\op{S}^E_h)_{E\subseteq D}$ be 
the {consistent family of solution operators} associated with the discrete linear variational {\color{black}problem}~\eqref{eq:weakelldisc}.
Then the following hold:\\
{\bf (i)} If $\tilde{c}\equiv 0$, then the global solution operator $\op{S}^{\tilde{D}}_h$ satisfies sDMP-A, and $\op{S}^{{D}}_h$
satisfies wDMP-A.\\
{\bf (ii)}  If $0\le \tilde{c}\le \tilde{c}_{\max}$, with $\tilde{c}_{\max}$ a constant, 
then there exists $h_{\max}>0$ depending on $\theta$ and $\tilde{c}_{\max}$ so that  for $0< h< h_{\max}$
the global solution operator $\op{S}^{\tilde{D}}_h$ satisfies sDMP-B, and $\op{S}^{D}_h$
satisfies wDMP-B.
\end{theorem}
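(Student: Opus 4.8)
The plan is to apply Corollary~\ref{cor:sDMPmatform} together with the positivity results for the local stiffness matrices developed in Lemma~\ref{lma:invGreenposGk}, handling the two cases separately but by the same template. The key observation is that the hypotheses on the mesh give a natural covering of $Int(\tilde D)$: for each interior vertex $P$ we take $E_P$ to be either a scaled copy of $G_\ell(\theta)$ containing $P$ in its interior, or the star around $P$ (which has only acute angles); since these $E_P$ are finite in number and their interiors cover $Int(\tilde D)$, Corollary~\ref{cor:sDMPmatform} reduces the global statement to verifying the matrix conditions \eqref{eq:matformsdmpAthm} (resp.\ \eqref{eq:matformsdmpBthm}) on each patch $E_P$.

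For part {\bf (i)}, on a star patch the conditions \eqref{eq:matformsdmpAthm} hold trivially as in the proof of Theorem~\ref{thm:classicalDMP} (acute angles give $\mat{A}_{ij}<0$ for every interior--incident edge, so $\mat{A}_{\alpha\alpha}$ is a $1\times 1$ positive scalar and $\mat{A}_{\alpha\beta}<\mat{0}$). On a $G_\ell(\theta)$ patch, the first condition $\mat{A}^{-1}_{\alpha\alpha}>\mat{0}$ is exactly Lemma~\ref{lma:invGreenposGk}, valid because $\theta>\theta_\ell$; the second condition is then obtained via Remark~\ref{rem:secconditionDMP}: the boundary edges of $G_\ell(\theta)$ all satisfy the (strict) angle condition by construction — the lining rhombuses are cut along their short diagonal and the defective edges lie strictly inside — so $\mat{A}_{\alpha\beta}\le\mat{0}$ with at least one strictly negative entry per boundary column, and $\mat{A}^{-1}_{\alpha\alpha}>\mat{0}$ upgrades this to $\mat{A}^{-1}_{\alpha\alpha}\mat{A}_{\alpha\beta}<\mat{0}$. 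Corollary~\ref{cor:sDMPmatform}{\bf (i)} then yields sDMP-A for $\op{S}^{\tilde D}_h$. Passing from $\tilde D$ back to $D$ (re-attaching the two excised corner triangles) only adds boundary vertices whose values do not influence $u^0_h$, so the computation at the end of Case~1 in Example~\ref{ex:threelinesmesh} applies verbatim to give wDMP-A for $\op{S}^D_h$.

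For part {\bf (ii)} the structure is the same but with $\mat{A}$ replaced by $\mat{A}+\mat{C}=\mat{A}+\tilde c\,\mat{M}$. On star patches $(\mat{A}_{\alpha\alpha}+\mat{C}_{\alpha\alpha})$ is again a positive scalar and $\mat{A}_{\alpha\beta}+\mat{C}_{\alpha\beta}<\mat{0}$ since $\mat{A}_{\alpha\beta}<\mat{0}$ and $\mat{C}_{\alpha\beta}\le\mat{0}$ is small. On $G_\ell$ patches one writes $\mat{A}_{\alpha\alpha}+\tilde c\,\mat{M}_{\alpha\alpha}=\mat{A}_{\alpha\alpha}(\mat{I}+\tilde c\,\mat{A}_{\alpha\alpha}^{-1}\mat{M}_{\alpha\alpha})$ and uses continuity of matrix inversion: since $\mat{A}_{\alpha\alpha}^{-1}>\mat{0}$ strictly and $\mat{M}_{\alpha\alpha}\ge\mat{0}$, there is a threshold $\tilde c$ (equivalently, after absorbing $h$ into the scaling, a threshold $h_{\max}$ depending only on $\theta$ and $\tilde c_{\max}$) below which $(\mat{A}_{\alpha\alpha}+\mat{C}_{\alpha\alpha})^{-1}>\mat{0}$ still holds. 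The off-diagonal condition follows as in {\bf (i)} from Remark~\ref{rem:secconditionDMP}, noting $\mat{A}_{\alpha\beta}+\mat{C}_{\alpha\beta}\le\mat{A}_{\alpha\beta}\le\mat{0}$ keeps at least one strictly negative entry per column when $h$ is small. There are only finitely many patch shapes up to scaling, so a single $h_{\max}$ works uniformly; Corollary~\ref{cor:sDMPmatform}{\bf (ii)} gives sDMP-B on $\tilde D$, and the corner-triangle argument gives wDMP-B on $D$.

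The main obstacle is the uniformity of the threshold in part {\bf (ii)}: the perturbation arguments are patch-local, so one must check that only finitely many patch geometries occur as $h\to 0$ (up to a scaling that is absorbed by the homogeneity of $\mat{A}$ and the scaling $\mat{M}\sim h^2$), and that the dependence on $\theta$ enters only through the fixed finite list $\theta_1,\dots,\theta_k$ already controlled by the hypothesis $\max_\ell\theta_\ell<\theta$. A secondary technical point, easily dispatched, is confirming that the excised-corner domain $\tilde D(\theta)$ still has connected interior-vertex graph and every boundary vertex adjacent to an interior one, as required by Corollary~\ref{cor:sDMPmatform}; this is immediate from the construction since only two corner triangles are removed.
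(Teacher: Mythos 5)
Your proposal is correct and follows essentially the same route as the paper's proof: cover $Int(\tilde D)$ by stars and scaled copies of $G_{\ell}(\theta)$, verify \eqref{eq:matformsdmpAthm} (resp.\ \eqref{eq:matformsdmpBthm}) on each patch using Lemma~\ref{lma:invGreenposGk}, the scale invariance of the 2D stiffness matrix, the strict angle condition on the boundary lining, and the $O(h^2)$ scaling of the mass matrix, then conclude via Corollary~\ref{cor:sDMPmatform} and the corner-triangle argument of Example~\ref{ex:threelinesmesh}. One small slip in part {\bf (ii)}: since $\tilde c\ge 0$ the reaction block satisfies $\mat{C}_{\alpha\beta}\ge\mat{0}$, not $\le\mat{0}$, so $\mat{A}_{\alpha\beta}+\mat{C}_{\alpha\beta}\ge\mat{A}_{\alpha\beta}$ and the strict negativity of the off-diagonal block survives only because $\mat{C}=O(h^2)$ while $\mat{A}=O(1)$ --- which is exactly the smallness-of-$h$ mechanism you (and the paper) already invoke, so the conclusion stands.
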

\begin{proof}
For {\bf (i)}, the hypothesis implies that each interior vertex $P$ lies either in the interior of a patch $F_P$ that is
a rescaled version of $G_{\ell}(\theta)$, or in a star $E_P$ (the union of the triangles having $P$ as vertex) that has only acute angles.
In the former case, since the stiffness matrix in 2D is independent of the scale, Lemma~\ref{lma:invGreenposGk} shows 
$\mat{A}^{-1}_{\ell} = \mat{A}^{-1}_{\ell}(\theta) > \mat{0}$, hence the same holds for the stiffness matrix $\mat{A}_P$
associated with the interior vertices of $F_P$, because $\mat{A}_P$ {\bf is identical to} $\mat{A}_{\ell}$.
Now let $\mat{A}^b_{\ell}=\mat{A}^b_{\ell}(\theta)$ be the stiffness matrix connecting interior and boundary entries in $G_{\ell}(\theta)$,
and $\mat{A}^b_P$ be its analogue on $F_P$.
Since the boundary layer of $F_P$ contains only edges that satisfy~\eqref{eq:anglecond}, we have $(\mat{A}^b_P)_{ij}<0$ for every edge 
$\overline{P_i P_j}$ with $P_i\in Int(F_P)$ and $P_j\in \partial F_P$. This shows that
$\mat{A}_P^{-1} \mat{A}^b_P < \mat{0}$,
that is,~\eqref{eq:matformsdmpAthm} is verified for $F_P$. 
For the latter case, when the patch is the star $E_P$, the same argument as in Theorem~\ref{thm:classicalDMP} shows 
that~\eqref{eq:matformsdmpAthm} is verified on $E_P$ as well. Corollary~\ref{cor:sDMPmatform}{\bf (i)} shows that
$\op{S}^{\tilde{D}}_h$ satisfies sDMP-A. As in Example~\ref{ex:threelinesmesh}, adding the two corner triangles 
leads to $\op{S}^{{D}}_h$ satisfying wDMP-A.\\
The argument for {\bf (ii)} lies in the scaling of the mass matrix compared to that of the stiffness matrix.
Assume a point  $P$ lies in the interior of a patch $F_P$  that is
a rescaled version of $G_{\ell}(\theta)$, as above. Denote by $\mat{M}_{\ell}=\mat{M}_{\ell}(\theta)$ the mass matrix 
associated with the interior vertices of the reference mesh $G_{\ell}(\theta)$. Due to scaling (see~\eqref{eq:formula4triangle}),
the associated mass matrix for $F_P$ is $K h^2\mat{M}_{\ell}$, with $K$ a dimensionless constant depending on $\theta$ and $\ell$.
Similarly, if {\color{black}$\mat{M}^b_{\ell}=\mat{M}^b_{\ell}(\theta)$} is the mass matrix connecting interior and boundary entries
on $G_{\ell}(\theta)$, then the analogous matrix on $F_P$ is $K h^2\mat{M}^b_{\ell}$. Hence, the interior and boundary reaction matrices 
$\mat{C}_P$ and $\mat{C}^b_P$, respectively, associated with $F_P$ (see~\eqref{eq:stiffmassreac}) satisfy
\begin{equation}
\label{eq:masstozero}
\mat{0}\le \mat{C}_P \le K \tilde{c}_{\max} h^2\mat{M}_{\ell},\ \ \ \mat{0}\le \mat{C}^b_P \le K \tilde{c}_{\max} h^2\mat{M}^b_{\ell}.
\end{equation}
Consequently
{\color{black}
\begin{eqnarray*}
%\label{eq:Cconditionsex1}
\lim_{h\to 0} (\mat{A}_P + \mat{C}_P)^{-1} = \mat{A}_P^{-1} > \mat{0},
\end{eqnarray*}
and%\ \mathrm{and}\ 
\begin{eqnarray*}
\lim_{h\to 0} (\mat{A}_P + \mat{C}_P)^{-1}(\mat{A}^b_P + \mat{C}^b_P) = \mat{A}_P^{-1}\mat{A}^b_P < \mat{0}.
\end{eqnarray*}
}
Hence, there exists $h_{\max}>0$ depending on $\tilde{c}_{\max}$ and $K$ (which depends  on $\theta$ and $k$ - the largest index of the defects),
so that for $0<h<h_{\max}$ we have
\begin{eqnarray}
\label{eq:Cconditionsex1}
(\mat{A}_P + \mat{C}_P)^{-1}  > \mat{0},\ \ \mathrm{and}\ \ 
(\mat{A}_P + \mat{C}_P)^{-1}(\mat{A}^b_P + \mat{C}^b_P)  < \mat{0}.
\end{eqnarray}
If, instead, the star $E_P$ around $P$ has only acute angles, similar inequalities hold if we replace 
$F_P$ with $E_P$.
By Corollary~\ref{cor:sDMPmatform}{\bf (ii)}, satisfies $\op{S}^{\tilde{D}}_h$ satisfies sDMP-B. 
The argument shown in Example~\ref{ex:threelinesmesh} can be replicated to show that
when adding back the two triangles at the {\color{black}horizontal} extremes of $\tilde{D}$, the global solution operator $\op{S}^{{D}}_h$ satisfies wDMP-B. 
\end{proof}

\begin{figure}[!h]
\begin{center}
        \includegraphics[width=5.2in]{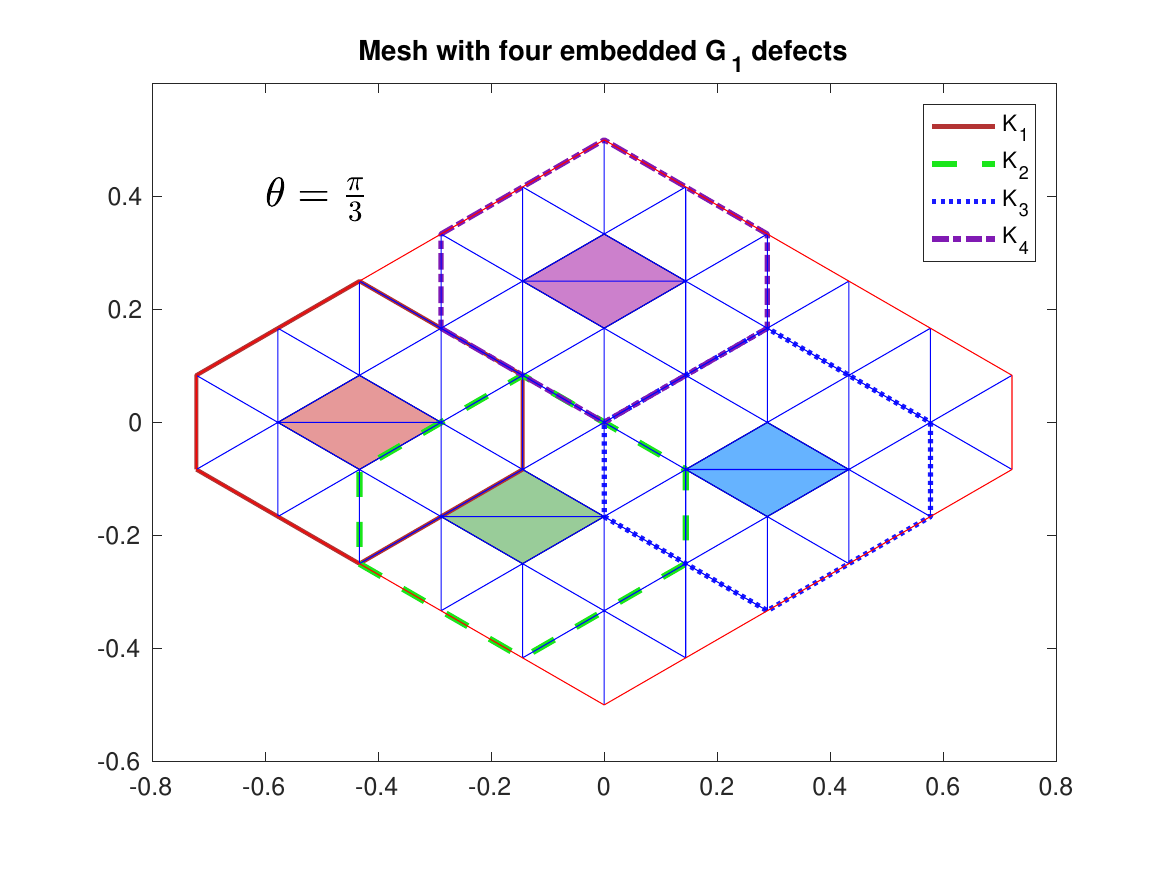}
\end{center}
\caption{Mesh on $\tilde{D}(\pi/3)$ with embedded $G_1$ defects ($k=1$ in reference to Theorem~\ref{thm:example1}). The four domains which are similar
  to $G_1$ are marked as $K_1, \dots, K_4$.}
\label{fig:G1meshdefect}
\end{figure}
\begin{figure}[!h]
\begin{center}
        \includegraphics[width=5.2in]{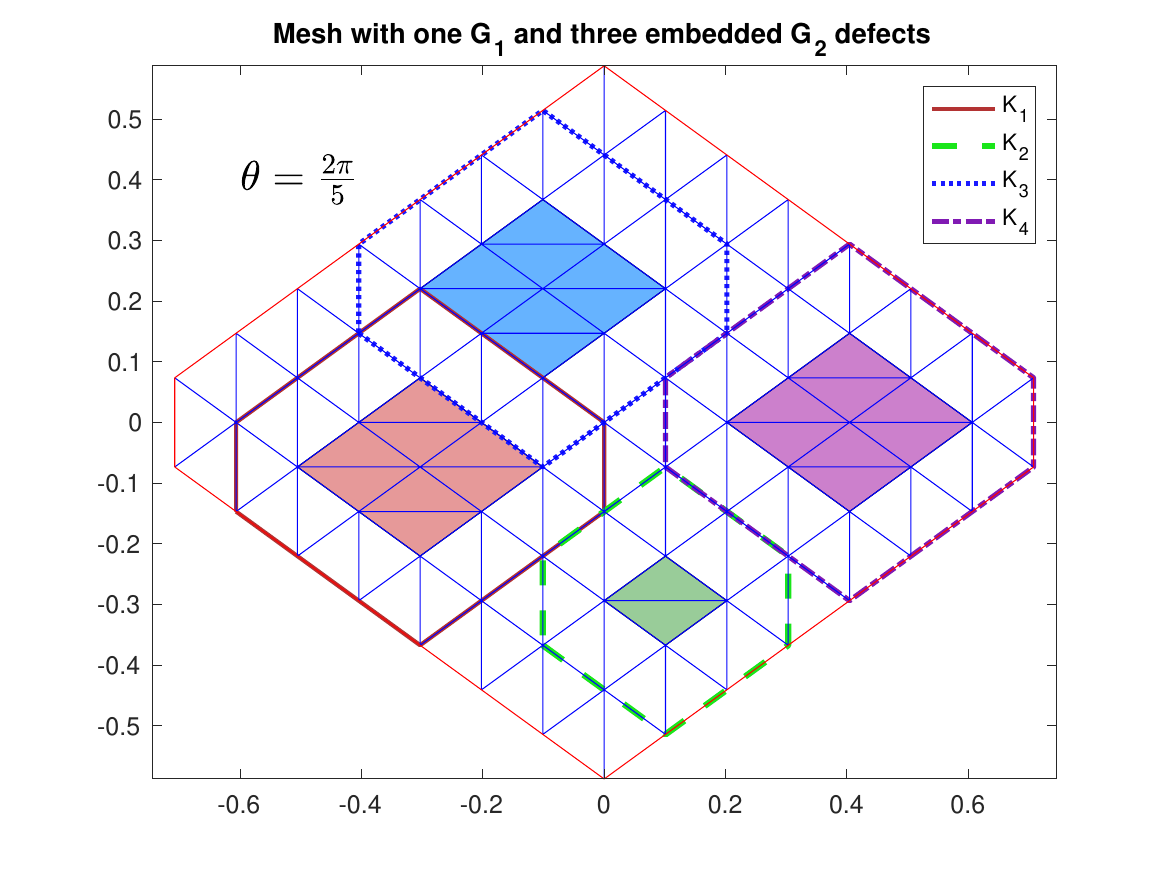}
\end{center}
\caption{Mesh on $\tilde{D}(2\pi/5)$ with embedded $G_1$ and $G_2$ defects ($k=2$ in reference to Theorem~\ref{thm:example1}). The four domains which are similar
  to $G_1$ or $G_2$ are marked as $K_1, \dots, K_4$.}
\label{fig:G2meshdefect}
\end{figure}

%\input{example2.tex}

%%%%%
% Degenerate triangle example

\subsection{
%\color{red}
wDMP-A for meshes with embedded nearly degenerate triangles}
%\subsection{Second case study: wDMP-A for meshes with embedded nearly degenerate triangles}
\label{ssec:degeneratemesh}
\begin{figure}[!h]
\begin{center}
        \includegraphics[width=3.2in]{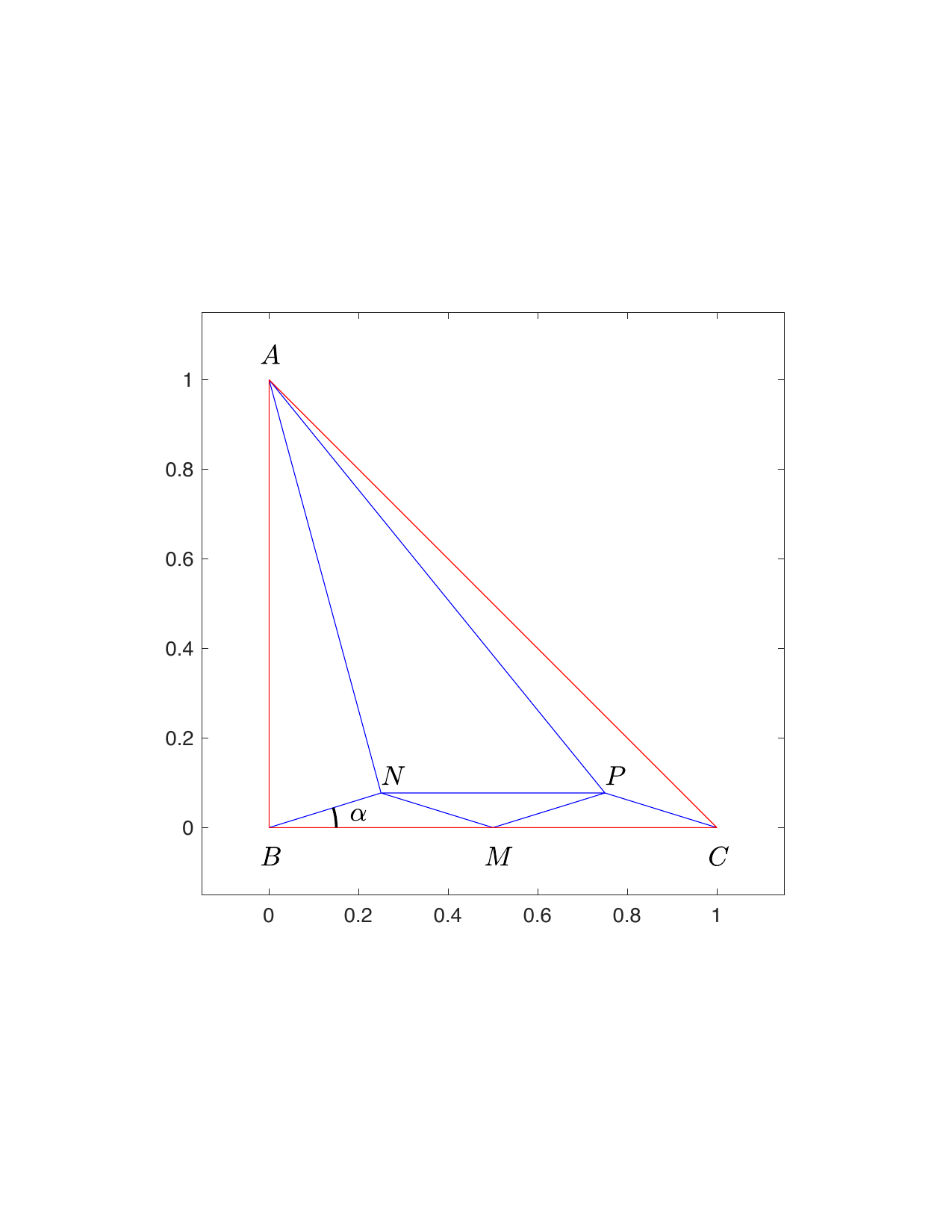}
\end{center}
\caption{Triangle with nearly degenerate internal triangulation.}
\label{fig:degenerate_triangle}
\end{figure}
In this section we prove that the finite element solution operator
for the Poisson equation satisfies the wDMP-A for a nearly degenerate mesh. This mesh
is closely related to the example given in Section~5 in~\cite{MR2085400}, 
in which a quasi-uniform triangulation is shown to violate the DMP as the mesh size $h\to 0$, in the sense that
the discrete Green's function has provably negative values for all $h>0$. 
The key element in both situations is the reference triangle $\Delta A B C$,
with $A=(0,0), B=(0,1), C=(1,0)$, to which the following three points are added:
\begin{eqnarray*}
&&M=\left(\frac{1}{2}, 0\right),\ \ 
  N=\left(\frac{1}{4}, \frac{\tan \alpha}{4}\right),\ \  
  P=\left(\frac{3}{4}, \frac{\tan \alpha}{4}\right).
\end{eqnarray*}
Together they give rise to the mesh shown in Fig.~\ref{fig:degenerate_triangle}. The mesh on $\Delta A B C$ violates the angle
condition
due to the fact that $a(\phi_M, \phi_P)>0$ for a small enough angle $\alpha>0$.
In~\cite{MR2085400} $\Delta A B C$ is placed at the boundary of a
uniform three-line mesh similar to the one in Example~\ref{ex:threelinesmesh} with $\theta=\pi/2$, 
% (in this section we divided the squares along the diagonal with slope $-1$ instead of $+1$), 
and it is shown that for a fixed, sufficiently small 
angle~$\alpha$ (refer to Fig.~\ref{fig:degenerate_triangle}), the discrete Green's function satisfies $g_h^P(N)<0$
for all $h>0$. By contrast, here we show that by placing $\Delta A B C$ in the same type of mesh, just one element away from the boundary 
as in Fig.~\ref{fig:mesh_embed_triangle}, the wDMP-A is satisfied for any $0< \alpha < \alpha_0$ for a certain  $\alpha_0>0$. In fact, it can be placed
anywhere inside the mesh, as long as it is surrounded by a layer of ``good'' triangles. It is instructive to compare the discrete Green's functions
between the cases when $\Delta A B C$ is one layer inside a domain Fig.~\ref{fig:degenerate_el_comparison} (left) vs. right at the 
boundary Fig.~\ref{fig:degenerate_el_comparison} (right). We see in the bottom left corner of Fig.~\ref{fig:degenerate_el_comparison}
that $g^P_h >0$ inside the domain, and it has the expected shape of a Green's function, while in the bottom right we have that $g^P_h(N)<0$ (note the difference
in the scales as well).

\begin{figure}[!h]
\begin{center}
        \includegraphics[width=4.5in]{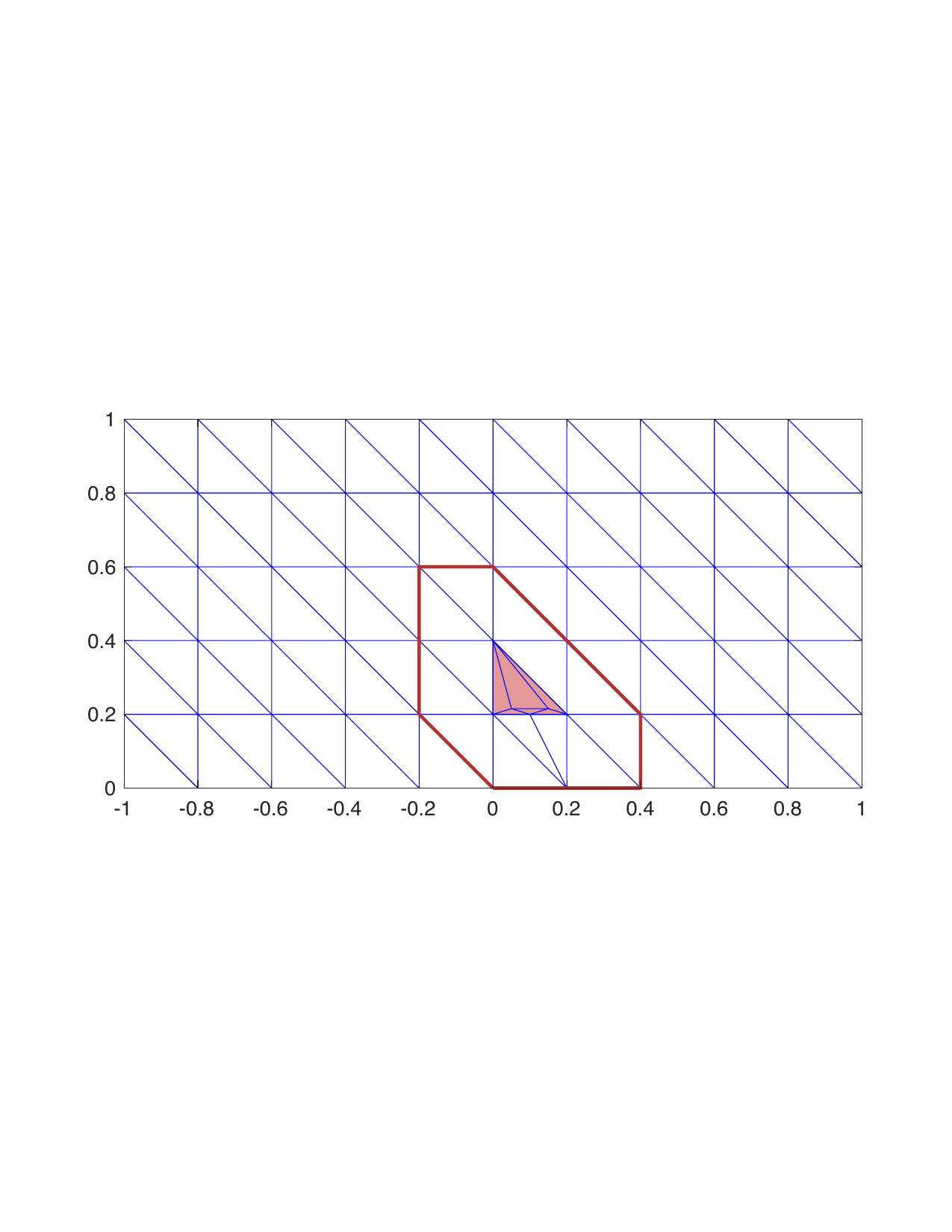}
\end{center}
\caption{Uniform three-line mesh with an embedded, irregularly divided triangle one layer away from the boundary. The finite element solver
for the Poisson equation satisfies the wDMP-A on this mesh, but {\color{black}fails} to do so if the triangle is placed at the boundary.}
\label{fig:mesh_embed_triangle}
\end{figure}

\begin{figure}[!h]
\begin{center}
        \includegraphics[width=5.2in]{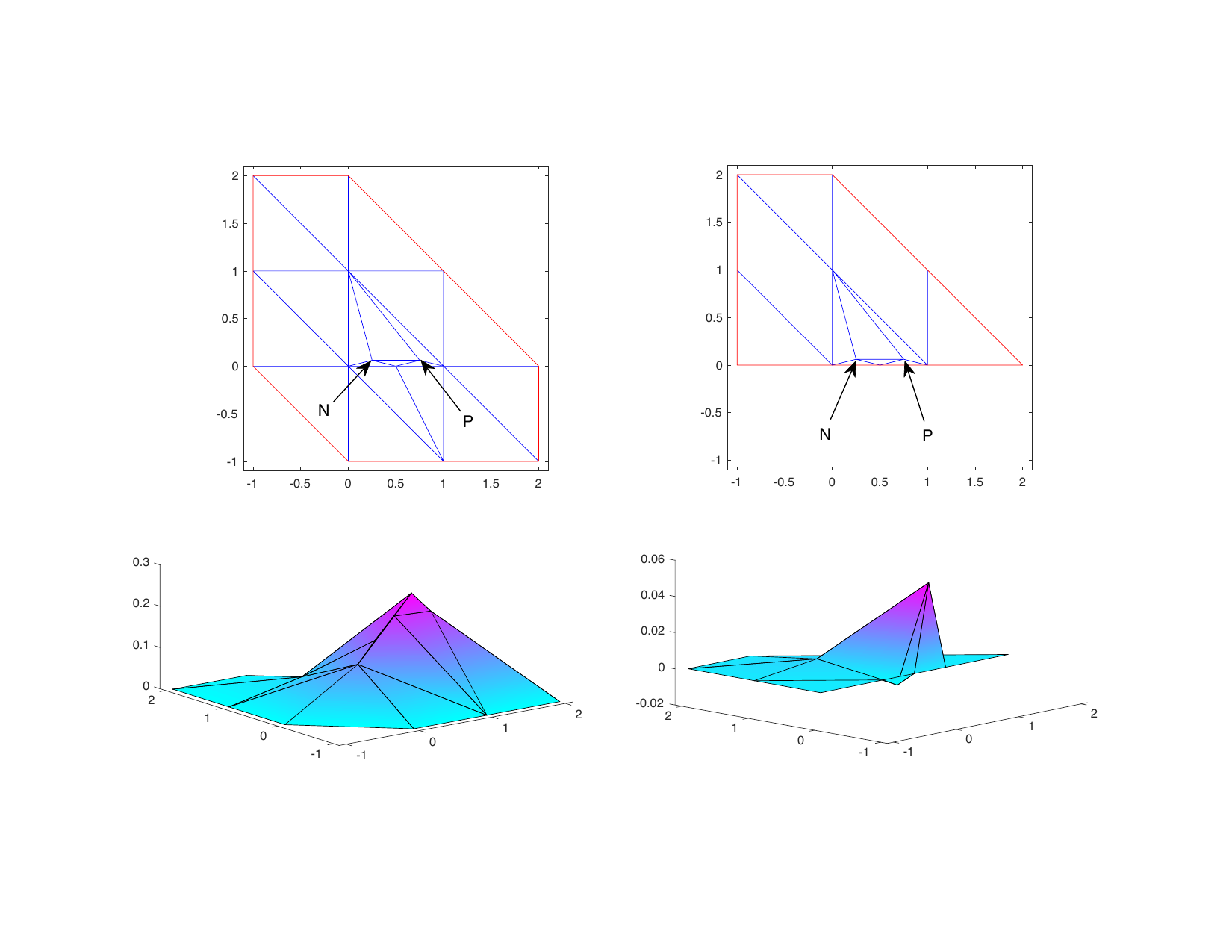}
\end{center}
\caption{Green's function for a mesh with near-degenerate elements {\bf near} the boundary (left) vs. {\bf at} the boundary (right); 
in the first case we have $g_h^P(V)>0$ for any internal vertex $V$,  while in the second case  $g_h^P(N)<0$.}
\label{fig:degenerate_el_comparison}
\end{figure}
The main argument is rooted in the analysis of the stiffness matrix for the subdomain $E$
pictured in the top left corner of Fig.~\ref{fig:degenerate_el_comparison}
(also in Figure~\ref{fig:patch_around_triangle}), representing the union of the supports of the nodal basis 
functions associated with the vertices $B, C, A, M, N, P$, {\bf in this order}. Thus the ordered basis in
the finite element space $\op{V}$ of this subdomain is
$\op{B} = \{\varphi_B, \varphi_C, \varphi_A, \varphi_M, \varphi_N, \varphi_P\}$; we define 
$\op{V} = \mathrm{span}(\op{B})$.
The associated stiffness matrix is denoted by $\mat{S}=\mat{S}(\alpha)$ (e.g., $\mat{S}_{11} = a_E(\varphi_B, \varphi_B)$, 
$\mat{S}_{35} = a_E(\varphi_A, \varphi_N)$, etc). The key result is the following.
\begin{theorem}
\label{thm:degenerate_triangle}
There exists a singular, {\color{black}rank-3 deficient} matrix ${\mat{T}_0}>\mat{0}$ so that
\begin{equation}
  \label{eq:limstiffex2}
  \lim_{\alpha\to 0} \mat{S}^{-1}(\alpha) = \mat{T}_0. 
\end{equation}
\end{theorem}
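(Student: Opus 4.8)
The plan is to isolate the stiff part of $\mat{S}(\alpha)$ coming from the triangles of $\Delta ABC$ that collapse as $\alpha\to0$, to read off the resulting rigid constraint, and then to recognise the limiting operator as the Green's matrix of a \emph{coarsened} mesh, on which positivity is transparent.

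\textbf{Step 1 (stiff/soft splitting).} Write $t=\tan\alpha$. The triangles of the refined $\Delta ABC$ not incident to $B$ — those with all vertices in $\{A,M,N,P,C\}$ — degenerate to segments on the $x$-axis as $t\to0$; call the rest (the ones incident to $B$, together with all triangles of $E$ outside $\Delta ABC$) the \emph{good} triangles. Using \eqref{eq:formula2triangle}--\eqref{eq:formula1triangle} to expand the Dirichlet energy on a thin triangle, split $\mat{S}(\alpha)=\mat{B}_\alpha+\tfrac1t\,\mat{Q}_\alpha+t\,\mat{R}_\alpha$, where $\mat{B}_\alpha$ is the stiffness contribution of the good triangles and $\mat{Q}_\alpha,\mat{R}_\alpha$ (bounded, symmetric, PSD for $\mat{Q}_\alpha$) come from the degenerating ones. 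As $t\to0$, $\mat{B}_\alpha\to\mat{B}_0$ (the good-triangle stiffness on the limit mesh, with $N\to(\tfrac14,0)$, $P\to(\tfrac34,0)$), $\mat{R}_\alpha\to\mat{R}_0$, and $\mat{Q}_\alpha\to\mat{Q}_0\succeq\mat{0}$. On each collapsing triangle the tangential mode carries $O(t)$ energy while the transversal mode is penalised by $1/t$; consequently $\mat{Q}_0$ has rank $3$ and its kernel $K\subset\R^6$ is exactly the set of nodal vectors for which $v(N),v(M),v(P)$ equal the one–dimensional linear interpolants at $x=\tfrac14,\tfrac12,\tfrac34$ of $v(A)$ at $x=0$ and $v(C)$ at $x=1$, i.e.
\[
K=\bigl\{v:\ v(N)=\tfrac{3v(A)+v(C)}4,\ v(M)=\tfrac{v(A)+v(C)}2,\ v(P)=\tfrac{v(A)+3v(C)}4\bigr\}.
\]
(Whatever the precise triangulation of the strip, the transversal deviations vanish jointly iff $v(M),v(N),v(P)$ solve an irreducible tridiagonal graph-Laplacian system with boundary data $v(A),v(C)$, so $K$ is $3$-dimensional and the interpolation weights lie in $(0,1)$.)

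\textbf{Step 2 (the coarsened mesh).} Parametrise $K$ by $(v(B),v(A),v(C))$ through a linear map $\iota:\R^3\to K$; in the ordered basis $\op{B}=\{\varphi_B,\varphi_C,\varphi_A,\varphi_M,\varphi_N,\varphi_P\}$ every entry of $\iota$ is $\ge0$ and every row of $\iota$ is $\gneqq\mat{0}$. For $w=\iota(v(B),v(A),v(C))\in K$ the associated finite element function restricts on $\Delta ABC$ to the \emph{affine} function with the prescribed corner values — its values at $N,P$, being edge interpolants of $v(A),v(C)$, match — and restricts off $\Delta ABC$ to the ordinary nodal function. Hence $\widehat{\mat S}\defeq\iota^{T}\mat{B}_0\,\iota$ is precisely the (interior–interior) stiffness matrix on the nodes $\{B,A,C\}$ of the mesh obtained from $\op{T}_h$ by \emph{un-subdividing} $\Delta ABC$. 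That mesh is of three-line type; all its edges satisfy the weak angle condition \eqref{eq:anglecond} (the slope-$\pm1$ edges give a zero entry, the others a negative one), so $\widehat{\mat S}$ is symmetric positive definite with nonpositive off-diagonal entries, and it is irreducible because $\{B,A,C\}$ is connected through the edges $\overline{AB},\overline{AC}$. Thus $\widehat{\mat S}$ is an irreducible Stieltjes matrix and $\widehat{\mat S}^{-1}>\mat{0}$ by~\cite{MR1753713}; in particular $\mat{B}_0|_K=\widehat{\mat S}$ is positive definite.

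\textbf{Step 3 (limit and positivity).} Since $\mat{S}(\alpha)$ is positive definite for small $\alpha>0$ while the penalty $\tfrac1t\mat{Q}_\alpha$ blows up transversally to $K$ and $\mat{B}_0|_K$ is positive definite, a standard stiff-penalty argument gives, for every $\mat{F}$, that $\mathrm{dist}\bigl(\mat{S}^{-1}(\alpha)\mat{F},K\bigr)=O(t)$ and that $\mat{S}^{-1}(\alpha)\mat{F}$ converges to the minimiser over $K$ of $\tfrac12 v^{T}\mat{B}_0 v-\mat{F}^{T}v$, namely to $\mat{T}_0\mat{F}$ with $\mat{T}_0\defeq\iota\,\widehat{\mat S}^{-1}\iota^{T}$; hence $\mat{S}^{-1}(\alpha)\to\mat{T}_0$. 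Finally $\mathrm{rank}\,\mat{T}_0=\dim K=3<6$, so $\mat{T}_0$ is singular, and since $\iota\ge\mat{0}$ has every row $\gneqq\mat{0}$ while $\widehat{\mat S}^{-1}>\mat{0}$, each entry $(\mat{T}_0)_{ij}=(\text{row }i\text{ of }\iota)\,\widehat{\mat S}^{-1}\,(\text{row }j\text{ of }\iota)^{T}$ is strictly positive, i.e. $\mat{T}_0>\mat{0}$, as claimed.

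\textbf{Main obstacle.} The delicate point is Step~1: obtaining the per-triangle energy expansion on a collapsing triangle, cleanly separating the $O(t)$ tangential mode from the $O(1/t)$ transversal one, and verifying that the joint vanishing of the transversal deviations is exactly the linear slaving defining $K$. Step~2's identification of $\widehat{\mat S}$ with the stiffness of the un-subdivided mesh is then what makes positivity immediate, and the singular-perturbation passage to the limit in Step~3 is routine linear algebra.
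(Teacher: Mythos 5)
Your argument is correct and arrives at the same limit matrix as the paper, but by a genuinely different route. The paper's proof (Appendix~\ref{sec:appendix}) passes to the hierarchical basis $\{\widetilde{\varphi}_B,\widetilde{\varphi}_C,\widetilde{\varphi}_A,\varphi_M,\varphi_N,\varphi_P\}$, computes every entry of the transformed stiffness matrix $\widetilde{\mat{S}}$ explicitly (via \eqref{eq:formula2triangle}, \eqref{eq:formula1triangle} and some Fubini tricks), and then inverts by the Schur complement of the $O(1/\alpha)$ block, reading off $\mat{T}_0$ in closed form. Your stiff/soft splitting $\mat{S}(\alpha)=\mat{B}_\alpha+\tfrac1t\mat{Q}_\alpha+t\mat{R}_\alpha$ with the penalty-limit identification $\mat{T}_0=\iota\,\widehat{\mat{S}}^{-1}\iota^{T}$ is the coordinate-free version of the same condensation: the paper's change-of-basis block $\mat{R}_0$ is exactly your slaving map (the columns of $\iota$ span your $K$), and its Schur complement of $\mat{C}$ is your elimination of $K^\perp$. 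What your version buys is an explanation of \emph{why} $\mat{T}_0>\mat{0}$ and why it is singular of rank $3$: it is the irreducible-Stieltjes Green's matrix of the un-subdivided mesh, prolongated by nonnegative interpolation weights. What the paper's version buys is the explicit value of $\mat{T}_0$ and, incidentally, the quantitative $O(\alpha)$ error, which feeds Corollary~\ref{cor:posSinv}.

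Two places need firming up before this is a complete proof. First, in Step 1 do not leave the kernel identification at ``whatever the precise triangulation of the strip'': for the actual mesh the three collapsing triangles are $\Delta AMN$, $\Delta MNP$, $\Delta MPC$, and a direct computation of the transversal energy gives exactly $\tfrac1t\bigl[(v_N-\tfrac{v_A+v_M}2)^2+(v_M-\tfrac{v_N+v_P}2)^2+(v_P-\tfrac{v_M+v_C}2)^2\bigr]$ (with $\alpha$-independent coefficients, since the $x$-coordinates of $A,M,N,P,C$ do not move), whose joint vanishing solves to precisely your $K$; this also removes any worry about an $O(1)$ remainder of $\tfrac1t\mat{Q}_\alpha$ polluting the reduced form, since $K\subseteq\ker\mat{Q}_\alpha$ for every $\alpha$. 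Second, the ``standard stiff-penalty argument'' of Step 3 should be written out (a two-line Schur-complement computation in a basis adapted to $K\oplus K^\perp$, which requires checking that $\mat{B}_0|_K=\widehat{\mat{S}}$ is positive definite --- true, since it is a Dirichlet stiffness matrix). A cosmetic point: your labelling of which coarse edge carries the zero off-diagonal entry differs from the paper's figure (in the paper's ordering it is the $(C,A)$ entry of $\mat{A}$ that vanishes, with $B$ tied to both $A$ and $C$ by $-1$), but irreducibility of the coarse $3\times3$ matrix holds either way, so the conclusion is unaffected.
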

The proof of Theorem~\ref{thm:degenerate_triangle} is technical, but elementary; it is given in Appendix~\ref{sec:appendix}, 
where also the precise
value for $\mat{T}_0$ is also shown. The theoretical result above was validated numerically.
{\color{black} With regards to DMPs, the most remarkable part of Theorem~\ref{eq:limstiffex2} is the positivity of 
$\mat{T}_0$ as $\alpha \to 0$. The existence of the limit as a bounded and singular matrix is related to the behavior of 
discrete Green's functions on degenerate meshes. In order to better illustrate this behavior
we provide a simpler case in Appendix~\ref{sec:appendix2}, which
emerged from an example discussed in~\cite{MR2085400} where the DMP holds on certain meshes violating the angle condition.}

\begin{corollary}
\label{cor:posSinv}
There exists $\alpha_0>0$ so that for $\alpha\in (0,\alpha_0)$ we have $\mat{S}^{-1} > \mat{0}$.
\end{corollary}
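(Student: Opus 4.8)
The plan is to obtain Corollary~\ref{cor:posSinv} directly from Theorem~\ref{thm:degenerate_triangle}, exploiting that entrywise strict positivity of a matrix is preserved under small perturbations; all of the substance sits in the theorem, while the corollary is a short limiting argument.

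First I would observe that $\mat{S}(\alpha)$ is invertible for every $\alpha>0$, so that $\mat{S}^{-1}(\alpha)$, and hence the limit in~\eqref{eq:limstiffex2}, is meaningful. Indeed, for $\alpha>0$ the vertices $N$ and $P$ have positive second coordinate, the internal triangulation of $E$ is nondegenerate, and since $\op{V}=\mathrm{span}(\op{B})$ contains no nonzero constant function (every element of $\op{V}$ vanishes at all vertices of $E$ outside $\{B,C,A,M,N,P\}$), the form $a_E(\cdot,\cdot)$ is positive definite on $\op{V}$; thus $\mat{S}(\alpha)$ is symmetric positive definite, in particular invertible.

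Then I would set $\mu=\min_{1\le i,j\le 6}(\mat{T}_0)_{ij}$; since $\mat{T}_0>\mat{0}$ and the minimum is over finitely many entries, $\mu>0$. By~\eqref{eq:limstiffex2} there is an $\alpha_0>0$ such that $|(\mat{S}^{-1}(\alpha))_{ij}-(\mat{T}_0)_{ij}|<\mu/2$ for all $i,j\in\{1,\dots,6\}$ and all $\alpha\in(0,\alpha_0)$. Hence, for such $\alpha$,
\[
(\mat{S}^{-1}(\alpha))_{ij}>(\mat{T}_0)_{ij}-\tfrac{\mu}{2}\ge\tfrac{\mu}{2}>0\qquad\text{for all }i,j,
\]
that is, $\mat{S}^{-1}(\alpha)>\mat{0}$, which is the assertion.

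I do not anticipate any real obstacle here: the difficulty is entirely concentrated in Theorem~\ref{thm:degenerate_triangle} --- evaluating the limit matrix $\mat{T}_0$ and verifying that each of its (finitely many) entries is strictly positive --- which is carried out in Appendix~\ref{sec:appendix}. The corollary itself is just the remark that strict entrywise positivity is an open condition on the $6\times 6$ matrix $\mat{S}^{-1}(\alpha)$.
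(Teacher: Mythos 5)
Your proof is correct and is exactly the argument the paper intends: the corollary is stated as an immediate consequence of Theorem~\ref{thm:degenerate_triangle}, with the (unwritten) justification being precisely that entrywise strict positivity of the finite matrix $\mat{T}_0$ is an open condition preserved under the convergence $\mat{S}^{-1}(\alpha)\to\mat{T}_0$. Your additional observation that $\mat{S}(\alpha)$ is symmetric positive definite (since every element of $\op{V}$ vanishes on $\partial E$) is a harmless and valid bit of bookkeeping that the paper leaves implicit.
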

\begin{remark}
\label{rem:globalmaxalpha}
Numerical results show that $\mat{S}^{-1} > \mat{0}$ {\bf for all} $\alpha>0$ for which the mesh is non-degenerate 
(see Figure~\ref{fig:smallest_entryDG}).
\end{remark}
\begin{figure}[!h]
\begin{center}
        \includegraphics[width=4.2in]{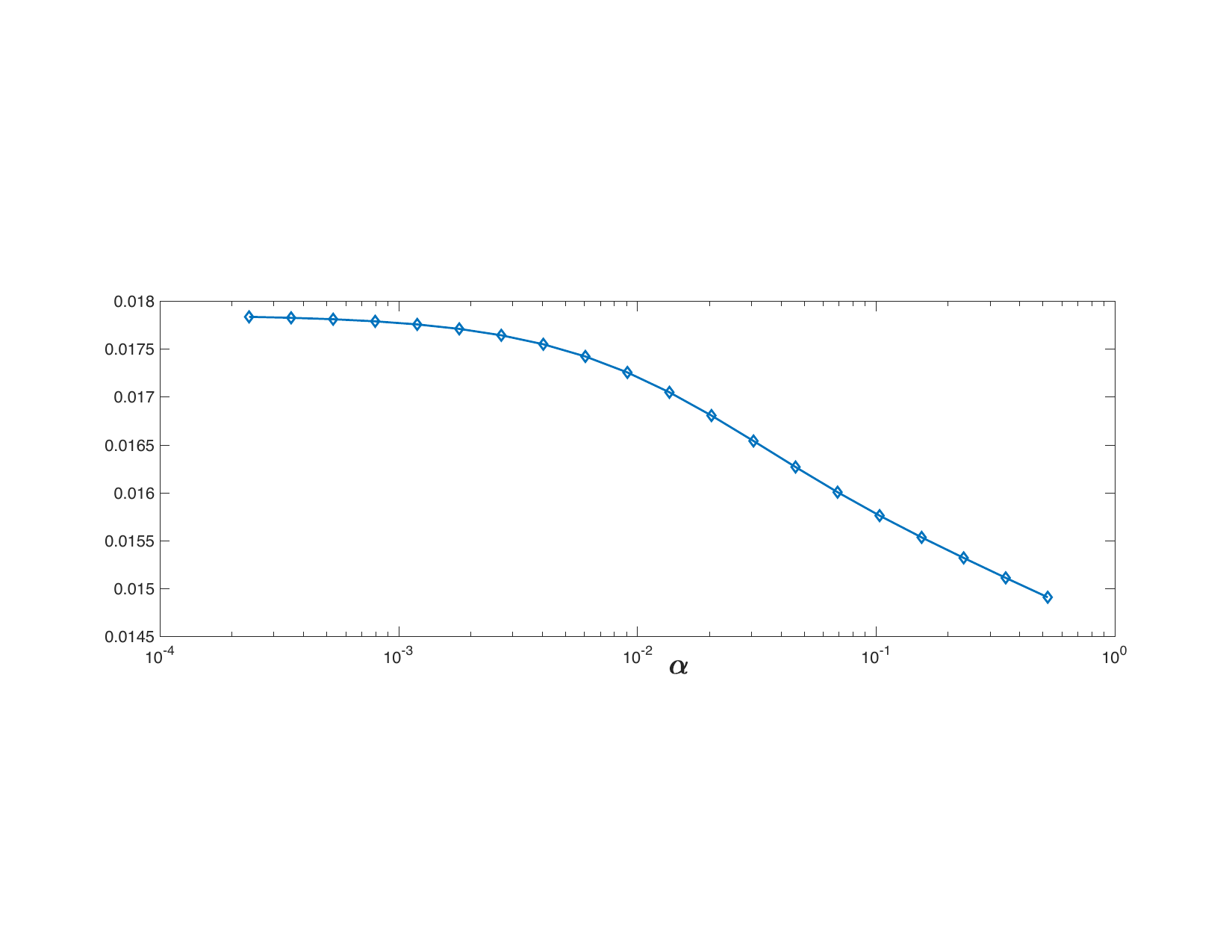}
\end{center}
\caption{Smallest entry in $\mat{S}(\alpha)^{-1}$ for $2.36 \times 10^{-4} <  \alpha \le \pi/6$.}
\label{fig:smallest_entryDG}
\end{figure}

\begin{corollary}
\label{cor:wDMPdegenerate}
Consider the three-line mesh shown in Fig.~\ref{fig:mesh_embed_triangle} on a rectangle. We further partition
any number of triangles so that the local mesh on each triangle is a scaled version of the mesh on $\Delta ABC$ with $0<\alpha<\alpha_0$.
Moreover, assume any two such triangles are separated from each other and from the boundary by one layer of unpartitioned triangles.
Then the finite element solution operator of the Poisson equation satisfies the wDMP-A on this mesh.
\end{corollary}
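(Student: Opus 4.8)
The plan is to derive the statement from Theorem~\ref{thm:wdmpa}: since the Poisson equation has $\tilde c\equiv 0$, it suffices to exhibit a finite family of discrete subdomains $(E_i)_{i\in I}$ with $Int(D)=\bigcup_{i\in I}Int(E_i)$ such that on each $E_i$ the two requirements in~\eqref{eq:matformWdmpAthm} hold, i.e.\ $\mat{A}_{\alpha_i\alpha_i}^{-1}>\mat{0}$ and $\mat{A}_{\alpha_i\beta_i}\le\mat{0}$, where $\alpha_i$, $\beta_i$ are the interior and boundary vertices of $E_i$. I would use two kinds of patches. For every partitioned triangle $\Delta_i$ (one carrying a scaled copy of the mesh on $\Delta ABC$ with parameter $\alpha\in(0,\alpha_0)$) I take $E_i$ to be the patch $E$ introduced just before Theorem~\ref{thm:degenerate_triangle}, namely the union of the supports of the six local nodal basis functions at $B,C,A,M,N,P$; its interior vertex set is exactly $\{B,C,A,M,N,P\}$, so it consists of $\Delta_i$ together with the immediately surrounding ring of elements. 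For every remaining interior vertex $Q$ of the refined mesh I take $E_Q$ to be the star of $Q$, exactly as in the proof of Theorem~\ref{thm:classicalDMP}.

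On a patch $E_i$ the first requirement is immediate from Corollary~\ref{cor:posSinv}: the $\op{P}_1$ stiffness matrix in two dimensions is scale invariant, so $\mat{A}_{\alpha_i\alpha_i}$ coincides, up to the obvious reordering of the six vertices, with the matrix $\mat{S}(\alpha)$ of Theorem~\ref{thm:degenerate_triangle}, and $\alpha<\alpha_0$ gives $\mat{A}_{\alpha_i\alpha_i}^{-1}=\mat{S}(\alpha)^{-1}>\mat{0}$. For the second requirement one needs that every edge of the refined mesh violating the weak angle condition has both endpoints among the six interior vertices of the patch containing it, so that such ``defective'' edges contribute only to $\mat{A}_{\alpha_i\alpha_i}$ and never to $\mat{A}_{\alpha_i\beta_i}$, and that all edges joining the six interior vertices of $E_i$ to $\partial E_i$ lie in the surrounding layer of unpartitioned three-line-mesh triangles (including the ones produced by the conforming subdivision of the triangle adjacent to the edge carrying $M$); for all those edges the opposite-angle sum is at most $\pi$, cf.~\eqref{eq:formula2triangle}, hence $\mat{A}_{\alpha_i\beta_i}\le\mat{0}$. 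On a star patch $E_Q$ one has $\alpha_Q=\{Q\}$, so $\mat{A}_{\alpha_Q\alpha_Q}=[\mat{A}_{QQ}]$ with $\mat{A}_{QQ}>0$; by the separation hypothesis $Q$ and all its neighbours lie in the unpartitioned part of the mesh (were $Q$ a vertex of a partitioned triangle it would already be interior to a patch of the first kind), so $\mat{A}_{\alpha_Q\beta_Q}\le\mat{0}$ by the same formula.

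It remains to check the covering property and the structural hypotheses inherited from Corollary~\ref{cor:sDMPmatform}. Each interior vertex of the refined mesh is either one of the newly introduced vertices $M_i,N_i,P_i$, or a corner $A_i,B_i,C_i$ of a partitioned triangle, hence interior to $E_i$, or else a vertex lying in no partitioned triangle, hence interior to its star $E_Q$; thus $Int(D)=\bigcup_i Int(E_i)\cup\bigcup_Q Int(E_Q)$, a finite union of discrete subdomains. The graph of interior vertices of a three-line mesh on a rectangle is connected and every boundary vertex is adjacent to an interior vertex, and both properties survive the local refinements. Theorem~\ref{thm:wdmpa} then yields that $\op{S}^D_h$ satisfies wDMP-A.

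The step I expect to be the real obstacle is the verification, for the patches $E_i$, that the separating layer of unpartitioned triangles is configured so that $\mat{A}_{\alpha_i\beta_i}\le\mat{0}$: one must confirm that subdividing the edge carrying $M$ forces a compatible, conformity-closing split of exactly one neighbouring triangle, that no defect leaks from that split to $\partial E_i$, and that every one of the finitely many edges joining $\{B_i,C_i,A_i,M_i,N_i,P_i\}$ to $\partial E_i$ obeys the weak angle condition --- a purely geometric check best carried out from Figure~\ref{fig:patch_around_triangle} together with~\eqref{eq:formula2triangle}. Everything else is a routine combination of Corollary~\ref{cor:posSinv} with the star argument of Theorem~\ref{thm:classicalDMP}.
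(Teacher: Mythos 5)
Your proposal is correct and follows essentially the same route as the paper: the paper's proof likewise invokes Corollary~\ref{cor:posSinv} together with Theorem~\ref{thm:wdmpa}, covering the interior vertices by patches similar to $E$ around each partitioned triangle and by ordinary stars elsewhere, and then checking the two inequalities in~\eqref{eq:matformWdmpAthm} on each patch. Your version is in fact somewhat more explicit than the paper's (which states the covering and the verification of~\eqref{eq:matformWdmpAthm} in one sentence), particularly in flagging the geometric check that no defective edge reaches $\partial E_i$ and that the edges crossing the separating layer obey the weak angle condition.
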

\begin{proof}
The statement follows from Corollary~\ref{cor:posSinv} and Theorem~\ref{thm:wdmpa}. The hypotheses ensure that
every vertex $V$ lies in the interior of a patch similar to $E$ in the top left corner of Fig.~\ref{fig:degenerate_el_comparison},
or the star around $V$ is similar to a standard star in  Example~\ref{ex:threelinesmesh}; in either case, the inequalities~\eqref{eq:matformWdmpAthm}
are satisfied.
\end{proof}

\section{The semilinear problem}
\label{sec:semilinear}
In this section we prove a result for semilinear elliptic problems equations which is similar to Theorem~\ref{thm:classicalDMP}. 
While results of this type can be found in a number of
articles~\cite{MR2121074, MR2226992, MR2392463, MR2914278, MR2991837, MR3425305, MR4092285, MR4848416}, we will show that our technique 
is also applicable to prove sDMPs for semilinear elliptic equations. However, in this case we do assume off-diagonal entries
of the stiffness matrix to be negative.
% THEOREM
\begin{theorem}
\label{thm:semilinear_sDMP-B}
Consider the discrete semilinear elliptic equation under  the formulation~\eqref{eq:weakelldiscinterp}, 
and let {\color{black}${c}$ satisfy~\eqref{eq:Cnondecreasing}--\eqref{eq:cincrease}}. 
Furthermore, assume there is a  constant $C_A>0$ independent of $h$ so
that whenever vertices $P_i$ and $P_j$ are adjacent with at least one of them lying in the interior 
(that is, $\overline{P_i P_j}$ is not a boundary edge), we have $\mat{A}_{ij}<0$ and 
\begin{equation}
\label{eq:stiffmatrixlimitsrat}
%\mat{M}_{ij}\le \frac{(\cot\alpha_0)^2}{24} h_V^2  \le C_A h^2 |\mat{A}_{ij}|.
\mat{M}_{ij}\le C_A h^2 |\mat{A}_{ij}|.
\end{equation}
Then there exists $h_{\max}>0$ so that for all $0<h<h_{\max}$, 
the discrete solution operator $\op{S}^E_h$ of~\eqref{eq:weakelldiscinterp} satisfies sDMP-B for every connected discrete subdomain 
$E\subseteq D$.
\end{theorem}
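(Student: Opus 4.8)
The strategy is to apply Theorem~\ref{thm:DMP}{\bf (B)} in essentially the same way Theorem~\ref{thm:classicalDMP} applies Corollary~\ref{cor:DMP}, but with two complications: the equation is nonlinear (so we must work directly with the abstract solution operator rather than an inverse-matrix condition), and we must control a reaction term whose mass-matrix scaling forces an $h$-threshold. First I would fix a connected discrete subdomain $E$ and, for each interior vertex $Q$ of $E$, take $E_Q$ to be the star around $Q$, i.e. the union of all triangles of $\op{T}_h$ containing $Q$; then $Q$ is the only interior vertex of $E_Q$, and every boundary vertex of $E_Q$ is adjacent to $Q$. The crux is to verify hypotheses (B1)--(B2) of Theorem~\ref{thm:DMP} for $\op{S}^{E_Q}_h$, uniformly over all stars, for $h$ small enough.

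\textbf{Reduction to a scalar inequality at $Q$.} On the star $E_Q$, testing~\eqref{eq:weakelldiscinterp} with $v=\varphi_Q$ gives a single scalar equation of the form
\begin{equation*}
\mat{A}_{QQ}\,u_h(Q) + \sum_{P\sim Q}\mat{A}_{QP}\,u_h(P) + \sum_j \mat{M}_{Qj}\,c(P_j,u_h(P_j)) = \mat{F}_Q,
\end{equation*}
where the sum over $P\sim Q$ runs over all vertices adjacent to $Q$ (interior and boundary of $E_Q$), and the mass-matrix sum comes from $\linnprd{\interp_h c(\cdot,u_h)}{\varphi_Q}_{E_Q}$. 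Since $\mat{F}_Q\ge 0$ and, by $\mat{A}\mat{1}=\mat{0}$ restricted to the star, $\mat{A}_{QQ} = -\sum_{P\sim Q}\mat{A}_{QP}$, I would set $m_b = \min_{P\sim Q} u_h^b(P)$ (the boundary minimum on $E_Q$), subtract $m_b\,\mat{A}_{QQ}$ from both sides, and rearrange to isolate $\mat{A}_{QQ}(u_h(Q)-m_b)$. Using $\mat{A}_{QP}<0$ for all $P\sim Q$ (the hypothesis) and $\mat{A}_{QQ}>0$, the elliptic part is bounded below by $\sum_{P\sim Q}|\mat{A}_{QP}|(u_h(P)-m_b)\ge 0$. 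The reaction/source terms must not spoil this: $\mat{F}_Q\ge 0$ helps, but the reaction sum $\sum_j\mat{M}_{Qj}c(P_j,u_h(P_j))$ can have either sign, since $c$ is only nonnegative-for-nonnegative-argument. Here is where~\eqref{eq:stiffmatrixlimitsrat} and the Lipschitz bound~\eqref{eq:cincrease} enter: write $c(P_j,u_h(P_j)) = c(P_j,u_h(P_j)) - c(P_j,0)$ (using $c(x,0)=0$), bound its absolute value by $L_c|u_h(P_j)|$, and bound $|u_h(P_j)|$ in terms of $|u_h(P_j)-m_b|$ plus $|m_b|$; the $|u_h(P_j)-m_b|$ contribution is absorbed by the elliptic term provided $\mat{M}_{Qj}L_c \le \frac{1}{2}|\mat{A}_{QP}|$-type inequalities hold, which is exactly~\eqref{eq:stiffmatrixlimitsrat} with $h^2 C_A L_c < \frac12$, i.e. $h < h_{\max} := (2C_A L_c)^{-1/2}$. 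The $|m_b|$ contribution is handled separately in the two cases $m_b\ge 0$ and $m_b<0$, exploiting that $c(\cdot,\cdot)$ is nondecreasing and $c(\cdot,0)=0$, so when $m_b\ge 0$ one in fact gets $c(P_j, u_h(P_j))\ge c(P_j,m_b)\ge 0$ if $u_h(P_j)\ge m_b$ (which we are trying to prove) — this circularity is resolved by first establishing $u_h(Q)\ge -\max(u_h^b)^-$ via a monotonicity/sign argument that does not need the full lower bound, then bootstrapping.

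\textbf{The main obstacle and the equality case.} The delicate point — and the main obstacle — is the sign of the reaction term when the boundary minimum $m_b$ is negative, because then $c(P_j,u_h(P_j))$ could be negative at vertices where $u_h(P_j)<0$, pushing $u_h(Q)$ down. This is precisely why the theorem targets sDMP-B (with the $-\max(u_h^b)^-$ bound and the \emph{nonpositive}-minimum hypothesis in condition (B2)) rather than sDMP-A: I would follow the structure of the proof of Lemma~\ref{lma:matformsdmpB}, shifting by $r = \min(m_b,0) = -\max(u_h^b)^-|_{E_Q}$ and using $c(x,\cdot)$ nondecreasing together with $c(x,0)=0$ to show the shifted reaction terms have a favorable sign, at the cost of the $h^2$-smallness that makes the mass-matrix perturbation subdominant. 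For (B2), suppose $u_h(Q) = -\max(u_h^b)^-$ over the star; then every inequality above must be an equality, which forces $u_h(P)=u_h(Q)$ for every $P\sim Q$ (since each $|\mat{A}_{QP}|>0$), i.e. $u_h$ is constant on $\overline{E}_Q$. Finally, once (B1)--(B2) are verified uniformly for all stars with $h<h_{\max}$, Theorem~\ref{thm:DMP}{\bf (B)} — applied with $D$ replaced by the connected discrete subdomain $E$, whose interior-vertex graph is connected and whose boundary vertices are each adjacent to an interior vertex by Definition~\ref{def:connected} — yields that $\op{S}^E_h$ satisfies sDMP-B, completing the proof.
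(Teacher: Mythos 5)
Your plan is correct and follows essentially the same route as the paper: localize to stars (a single interior vertex $Q$), write the one scalar equation obtained by testing with $\varphi_Q$, use the zero row sum of $\mat{A}$ together with $\mat{A}_{QP}<0$ for the elliptic part, absorb the reaction contribution via the Lipschitz bound and~\eqref{eq:stiffmatrixlimitsrat} once $L_cC_Ah^2<1$ (your extra factor of $2$ is harmless), split on the sign of the boundary minimum, force every term to vanish in the equality case, and then invoke the connectivity machinery of Section~\ref{sec:mainres}. One small remark: the ``circularity'' you worry about when $m_b\ge 0$ is not actually present — on a star every neighbor of $Q$ is a boundary vertex of $E_Q$, so $u_h(P_j)\ge m_b$ holds by definition of $m_b$ as the minimum of the prescribed boundary data, and no bootstrapping is needed; likewise the diagonal reaction term $\mat{M}_{QQ}\,c(Q,u_h(Q))$ is handled by its sign (it is nonpositive exactly when the contradiction hypothesis $u_h(Q)<0$ is in force) rather than by absorption.
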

% PROOF
\begin{proof}
{\color{black}The} argument follows the same path as in Theorem~\ref{thm:classicalDMP}, namely we prove that $\op{S}^E_h$ satisfies 
the sDMP-B for any star $E_P$ around a vertex $P$. The conclusion will follow from Corollary~\ref{cor:DMP}.

Thus, it is sufficient to prove the prove sDMP-B assuming the mesh has only one interior point $P_1$ and boundary points
$P_2, \dots P_N$, i.e., $D=E_{P_1}$. In this case $h \le \mathrm{diam}(D) \le 2 h$; note that the mesh size plays a role in the
estimation. Let $u_h=\op{S}_h^D(f,u_h^b)$, and
denote $\mat{F}_i = f(P_i)$, $\mat{U}_i = u_h(P_i)$, for $1\le i\le N$. In matrix terms,~\eqref{eq:weakelldiscinterp} is represented by the nonlinear 
scalar equation: find $\mat{U}_1 \in \R$ so that
\begin{align}
\label{eq:sldiscmat}
\mat{A}_{11} \mat{U}_1 + \sum_{i=2}^N \mat{A}_{i1}\mat{U}_i + \mat{M}_{11} \:c(P_1, \mat{U}_1) + \sum_{i=2}^N \mat{M}_{i1}\:c(P_i, \mat{U}_i)
= \sum_{i=1}^N \mat{M}_{i1} \mat{F}_i.
\end{align}
Since $f\ge 0$, we have 
\begin{align}
\label{eq:sldiscmatineq}
\mat{A}_{11} \mat{U}_1 + \sum_{i=2}^N \mat{A}_{i1}\mat{U}_i + \mat{M}_{11} \:c(P_1, \mat{U}_1) + \sum_{i=2}^N \mat{M}_{i1}\:c(P_i, \mat{U}_i) \ge 0.
%\mat{A}_{00} \mat{U}_0 + \sum_{i=1}^n \mat{A}_{i0}\mat{U}_i + \mat{M}_{00} \:c(\mat{U}_0) + \sum_{i=1}^n \mat{M}_{i0}\:c(\mat{U}_i) \ge 0.
\end{align}
Note that $\mat{A}_{11}>0$, and $\mat{M}_{i1}>0$ for $1\le i \le N$.
We consider two cases: \\
{\bf Case 1:} First assume that $\mat{U}_i\ge 0$ for $2\le i\le N$, which implies that 
\begin{equation}
\label{eq:posClip}
0\le c(P_i, \mat{U}_i)\le L_c\: \mat{U}_i,\ \ 2\le i\le N.
\end{equation}
In this case we have 
$$
-\max_{\partial D}{u^-_h} = -\max_{2\le i\le N} \mat{U}_i^- = 0.$$ 
If we had $\mat{U}_1<0$, then $c(P_1, \mat{U}_1)\le 0$. Hence,
\begin{eqnarray}
%\label{eq:sldiscmatineq}
\nonumber
&&0<\mat{A}_{11} (-\mat{U}_1)  - \mat{M}_{11} \:c(P_1, \mat{U}_1) \stackrel{\eqref{eq:sldiscmatineq}}{\le}
\sum_{i=2}^N \mat{A}_{i1}\mat{U}_i + \sum_{i=2}^N \mat{M}_{i1}\:c(P_i, \mat{U}_i)\\
\nonumber
&&\stackrel{\eqref{eq:posClip}}{\le} \sum_{i=2}^N (\mat{A}_{i1} + L_c\mat{M}_{i1})\mat{U}_i
=\sum_{i=2}^N (-\mat{A}_{i1})(-1 + L_c\mat{M}_{i1}/(-\mat{A}_{i1}))\mat{U}_i\\
\label{eq:case1ineq}
&&\stackrel{\eqref{eq:stiffmatrixlimitsrat}}{\le}
\sum_{i=2}^N (-\mat{A}_{i1})(-1 + L_c C_A h^2)\mat{U}_i.
\end{eqnarray}
This is not possible if $h<h_0 = \sqrt{(L_c C_A)^{-1}}$, for in that case we have
\begin{equation}
\label{eq:case1ineq2}
(-\mat{A}_{i1})(-1 + L_c C_A h^2) <  0,\ \ \forall i=2,\dots,N.
\end{equation}
Together with $\mat{U}_i\ge 0$ for $2\le i\le N$,  this renders the sum in~\eqref{eq:case1ineq} to be nonpositive, contradicting our assumption on 
$\mat{U}_1$. It remains that $\mat{U}_1\ge 0$. 
For the strong part, assume $\mat{U}_1 = 0$; then we also have $c(P_1, \mat{U}_1) = 0$. 
Using the same line of arguments as above, it follows that the sum from~\eqref{eq:case1ineq} is nonnegative. However,
since all the terms in~\eqref{eq:case1ineq}
are nonpositive, they all must be zero, forcing $\mat{U}_i = 0$ for $i=2,\dots,N$.
 \\
{\bf Case 2:} We assume that at least one of the values $\mat{U}_i$, $i=2, \dots N$ is negative. 
We partition $I_N = \{2,\dots,N\} = \op{N}_{-}\cup \op{N}_{+}$, with
%\begin{align*}
%\op{N}_{-} = \{i\in \N_n :\ \mat{U}_i< \mat{U}_0\},\ \ \op{N}^{0} = \{i\in \N_n :\ \mat{U}_i=\mat{U}_0\}\ \ 
%\end{align*}
\begin{align*}
\op{N}_{-} = \{i\in I_N :\ \mat{U}_i< 0\},\ \ \op{N}_{+} = \{i\in I_N :\ \mat{U}_i\ge 0\}.
\end{align*}
%and so we have $\op{N}_{-} \ne \varnothing$. 
This implies $c(P_i, \mat{U}_i)\le 0$ for $i\in \op{N}_{-}$, 
and  $c(P_i, \mat{U}_i)\ge 0$ for $i\in \op{N}_{+}$.
Note that
\begin{align}
\label{eq:acoeffineq}
-\mat{A}_{11} - \sum_{i\in \op{N}_{-}} \mat{A}_{i1} = \sum_{i\in \op{N}_{+}} \mat{A}_{i1} \le 0,
\end{align}
because $\sum_{i = 1}^N\mat{A}_{1i}=0$. Note that all the arguments still hold if $\op{N}_+=\varnothing$. % is empty.
If $\mat{U}_1 < \min_{i\in \op{N}_{-}} \mat{U}_i<0$, then cf.~\eqref{eq:sldiscmatineq} we have
\begin{align*}
%\label{eq:sldiscmatineqcase2}
& \sum_{i\in \op{N}_{+}} \mat{A}_{i1}\mat{U}_i + \mat{M}_{11} \:c(P_1, \mat{U}_1) + \sum_{i=2}^N \mat{M}_{i1}\:c(P_i, \mat{U}_i) \ge 
-\mat{A}_{11} \mat{U}_1 + \sum_{i\in \op{N}_{-}} (-\mat{A}_{i1})\mat{U}_i\\
& =
\underbrace{(-\mat{A}_{11} -\sum_{i\in \op{N}_{-}} \mat{A}_{i1}) \mat{U}_1}_{\ge 0} + \sum_{i\in \op{N}_{-}} (-\mat{A}_{i1})(\mat{U}_i-\mat{U}_1)\ge 
\sum_{i\in \op{N}_{-}} (-\mat{A}_{i1})(\mat{U}_i-\mat{U}_1) >0.
\end{align*}
Therefore, 
\begin{align}
\nonumber
&0< \mat{M}_{11} \:c(P_1, \mat{U}_1)  + \sum_{i\in \op{N}_{+}} \left(\mat{A}_{i1}\mat{U}_i+\mat{M}_{i1}\:c(P_i, \mat{U}_i) \right) +
\sum_{i\in \op{N}_{-}} \mat{M}_{i1}\:c(P_i, \mat{U}_i)\\
\label{eq:sldiscmatineqcase3}
&\stackrel{\eqref{eq:stiffmatrixlimitsrat}}{\le}
\mat{M}_{11} \:c(P_1, \mat{U}_1)  +
\sum_{i\in \op{N}_{+}} (-\mat{A}_{i1})(-1 + L_c C_A h^2)\mat{U}_i +
\sum_{i\in \op{N}_{-}} \mat{M}_{i1}\:c(P_i, \mat{U}_i).
\end{align}
Following~\eqref{eq:case1ineq2}, all the terms in the sum of~\eqref{eq:sldiscmatineqcase3} are nonpositive for $h<h_0$, thus contradicting the assumption
on $\mat{U}_1$. Hence,
\begin{equation}
\label{eq:minu0}
\mat{U}_1 \ge  \min_{i\in \op{N}_{-}} \mat{U}_i = -\max_{\partial E_{P_1}} u_h^-.
\end{equation}
If equality holds in~\eqref{eq:minu0}, then we have $\mat{U}_i-\mat{U}_1 \ge 0$, $c(P_i, \mat{U}_i)-c(P_i, \mat{U}_1) \ge 0$, and 
$c(P_i, \mat{U}_1)\le 0$ for $1\le i \le N$. Consequently,
\begin{align*}
\label{eq:sldiscmatineq4}
&0\le \mat{A}_{11}\mat{U}_1 + \sum_{i=2}^N \mat{A}_{i1}\mat{U}_i +  \sum_{i=1}^N\mat{M}_{i1}\:c(P_i,\mat{U}_i)  \\
&= \sum_{i=2}^N \mat{A}_{i1}(\mat{U}_i-\mat{U}_1) + \sum_{i=1}^N\mat{M}_{i1}\:c(P_i,\mat{U}_1) + 
\sum_{i=2}^N\mat{M}_{i1}\:\left(c(P_i,\mat{U}_i) - c(P_i, \mat{U}_1)\right)\\
&\le \sum_{i=2}^N (\mat{A}_{i1}+ L_c \mat{M}_{i1})(\mat{U}_i-\mat{U}_1) + \sum_{i=1}^N\mat{M}_{i1}\:c(P_i,\mat{U}_1)\\
&\le \sum_{i=2}^N (-\mat{A}_{i1})(-1 + L_c C_A h^2 )(\mat{U}_i-\mat{U}_1) + \sum_{i=1}^N\mat{M}_{i1}\:c(P_i,\mat{U}_1).
\end{align*}
Since all the terms in the sum above are nonpositive for $h<h_0$, it follows that they are all zero, showing that $\mat{U}$ is constant 
and that $c(P_i,\mat{U}_1) = 0$ for $1\le i\le N$.
\end{proof}

\begin{remark}
\label{re:scalignmass}
Condition~\eqref{eq:stiffmatrixlimitsrat} is related to the scaling of the mass matrix entries vs. those of the stiffness matrix: when scaling an
element in $\R^d$ by a factor $h$, the mass matrix scales by $h^d$, and the stiffness matrix by $h^{d-2}$.
Therefore~\eqref{eq:stiffmatrixlimitsrat} holds on the
all uniform meshes $G_k(\theta)$ in Section~\ref{ssec:nonmonotedge}, with $C_A$ depending on the angle $\theta$, but not on $k$. In general,
condition~\eqref{eq:stiffmatrixlimitsrat} is related to the regularity of the mesh. 

For the two-dimensional Laplacian on non-uniform meshes we can show that~\eqref{eq:stiffmatrixlimitsrat} is satisfied provided 
there exists $\alpha_0>0$ so that
\begin{equation}
\label{eq:accute_angle_lim}
\alpha_0\le \alpha \le \frac{\pi}{2}-\alpha_0.
\end{equation}
Indeed, given an interior edge $\overline{P_i P_j}$ in the notation from Section~\ref{ssec:classresult} {\color{black}
and referring to Fig.~\ref{fig:triangle_formula} (right)} we have cf.~\eqref{eq:formula4triangle} 
{\color{black}
$$
\int_{\Delta P_i P_j P_k}\varphi_i \varphi_j = \frac{h_k^2}{24(\cot \theta_i + \cot \theta_j)} \le \frac{h_k^2}{48 \cot\left(\frac{\pi}{2}-\alpha_0\right)} = 
\frac{h_k^2}{48\tan \alpha_0},
$$}
where $h_k = |\overline{P_i P_j}|$.
{\color{black}
Since two adjacent triangles contribute to $\mat{M}_{ij}$, we have 
$$
\mat{M}_{ij} \le \frac{h_k^2}{24\tan \alpha_0}.
$$
}
Using~\eqref{eq:formula2triangle} and the notation in Figure~\ref{fig:triangle_formula} (left), we have
{\color{black}
$$
-\mat{A}_{ij} = \frac{\sin (\theta_1+\theta_2)}{2 \sin \theta_1 \sin \theta_2}
= \frac{1}{2}(\cot \theta_1+\cot \theta_2) \ge \cot \left(\frac{\pi}{2}-\alpha_0\right) = \tan \alpha_0.
$$
It follows that
$$
\mat{M}_{ij} \le \frac{h_k^2\tan\alpha_0}{24 \tan^2\alpha_0} \le \frac{h_k^2}{24 \tan^2\alpha_0}\:|\mat{A}_{ij}|.
$$
Hence,  $C_A = (24 \tan^2\alpha_0)^{-1}$.
}
\end{remark}

%\begin{equation}
%\label{eq:massmatrixlimits}
%\mat{M}_{ij} \le \frac{h_V^2}{24} \cot(\alpha_0)\ \ \mathrm{and}\ \ \mat{A}_{ij} \le  -\tan \alpha_0  <0.
%\end{equation}
%Hence, 

%with $C_A = (\cot\alpha_0)^2/24$.

\section{Conclusions}
\label{sec:conclusion}
We have introduced a novel technique for proving the global sDMP for elliptic equations; this can take  two forms, depending on the presence 
or absence of the zeroth order term. This method does not assume that all the off-diagonal entries of the stiffness matrix are nonpositive.
We applied this {\color{black}technique} to the ${\mathcal P}_1$ discretization of elliptic equations, and showed that the global sDMP can hold  even in the absence of
the local sDMP is being satisfied everywhere. The method is also applied to prove -- in a fairly natural fashion -- that the sDMP holds  
for semilinear elliptic equations as well.
As formulated, we believe the new method can be applied to other classes of nonlinear elliptic equations, as well as $P_2$ and $Q_1$ discretizations.
The idea of local to global extension of DMPs via graph connectivity can be applied to other classes of problems, such as 
finite element discretizations of parabolic equations, and perhaps other types of discretizations of elliptic equations as well.

\appendix
\section{The proof of Theorem~\ref{thm:degenerate_triangle}}
\label{sec:appendix}
Following a strategy similar to the one in~\cite{MR2085400},
we use a hierarchical basis to compute an approximation to the matrix $\mat{S}=\mat{S}(\alpha)$, 
which will lead to showing that $\mat{S}^{-1}>\mat{0}$ for sufficiently small $\alpha>0$.
More precisely, we consider the coarse mesh on the domain
$E$ resulted from removing the points $M, N, P$ (refer to Figure~\ref{fig:patch_around_triangle}); 
the coarse basis functions we use are $\widetilde{\varphi}_B, \widetilde{\varphi}_C, \widetilde{\varphi}_A$, 
all of which are linear  on the entire triangle $\Delta ABC$. The alternative basis in $\op{V}$ is
$\widetilde{\op{B}} = 
\{\widetilde{\varphi}_B, \widetilde{\varphi}_C, \widetilde{\varphi}_A, \varphi_M, \varphi_N, \varphi_P\}$, and 
we denote by $\widetilde{\mat{S}}$ the stiffness matrix computed in the basis $\widetilde{\op{B}}$.

If $\mat{A}=\mat{A}(\alpha), \mat{B}=\mat{B}(\alpha)$ are nonsingular square matrices of the same size, we use the notation
$\mat{A} \approx \mat{B}$ if $\mat{A} = \mat{B}(\mat{I} + \mat{O}(\alpha))$ for sufficiently small $\alpha$,
where $\mat{O}(\alpha)$ is a matrix with entries of size $O(\alpha)$.
It is a simple exercise to see that this an equivalence relation which is  closed to matrix product and inversion, in the sense that
\begin{eqnarray*}
&&\mat{A}_1\approx \mat{A}_2\ \ \mathrm{and}\ \ \mat{B}_1\approx \mat{B}_2\ \ \mathrm{implies}\ \ 
\mat{A}_1\mat{B}_1\approx \mat{A}_2\mat{B}_2,\\
&&\mat{A}_1\approx \mat{A}_2\ \ \mathrm{implies}\ \ \mat{A}^{-1}_1\approx \mat{A}^{-1}_2.
\end{eqnarray*}
%If $\|\mat{B}^{-1}\| = O(\alpha)$, e.g., when $\mat{A}$ is independent of $\alpha$, 
If $\mat{B}$ is independent of $\alpha$ and invertible, 
then $\mat{A} = \mat{B}+\mat{O}(\alpha)$ implies $\mat{A} \approx \mat{B}$, because 
$$\mat{A} = \mat{B}+\mat{O}(\alpha) = \mat{B}\left(\mat{I}+\mat{B}^{-1}\mat{O}(\alpha)\right) = 
\mat{B}\left(\mat{I}+\mat{O}(\alpha)\right).$$ 

%%%%%%%%% LEMMA %%%%%%%%%
\begin{lemma}
\label{lma:smallstiffnessmattilde}
The matrix $\widetilde{\mat{S}}$ has the block form
\begin{equation}
\label{eq:smallstiffnessmatblock}
\widetilde{\mat{S}} = 
\begin{bmatrix} \mat{A} & \mat{B} \\
\mat{B}^T & \mat{C}\end{bmatrix},
\end{equation}
with the individual blocks given by
\begin{equation}
\label{eq:smallstiffnessmatdetails1}
{\mat{A}} =
\begin{bmatrix} 4& -1& -1\\-1& 4& 0\\-1& 0& 4 \end{bmatrix},\ \ 
%\widetilde{\mat{S}}_{12} = 
%\begin{pmatrix} \frac{1}{2}& 0& 0\\-\frac{1}{4}& 0& 0\\-\frac{1}{2}& 0& 0 \end{pmatrix},\ \ 
{\mat{B}} = 
\begin{bmatrix} {1}/{2}& 0& 0\\{1}/{2}& 0& 0\\-{1}/{2}& 0& 0 \end{bmatrix},\ \ 
%\\
%\label{eq:smallstiffnessmatdetails2}
{\mat{C}_0} = 
\begin{bmatrix} 3/2& -1& -1\\-1& 5/4& 1/4\\-1& 1/4& 5/4 \end{bmatrix}
\end{equation}
and 
\begin{equation}
\label{eq:Cform}
\mat{C} = \frac{1}{\alpha}(\mat{C}_0+\mat{O}(\alpha)). 
\end{equation}
\end{lemma}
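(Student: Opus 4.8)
The proof is a direct computation, organized block by block, and I would present it in that order. First I would point out that \eqref{eq:smallstiffnessmatblock} is merely the block partition of $\widetilde{\mathbf S}$ induced by splitting the ordered basis $\widetilde{\mathcal B}$ into its coarse part $\{\widetilde\varphi_B,\widetilde\varphi_C,\widetilde\varphi_A\}$ and its fine part $\{\varphi_M,\varphi_N,\varphi_P\}$, so the content of the lemma is the evaluation of the three blocks. Throughout I would use the element-wise formulas \eqref{eq:formula1triangle}--\eqref{eq:formula2triangle}, which express the contribution of a triangle $T$ to $a_E(\cdot,\cdot)$ through the interior angles of $T$; recall in particular that $T$ contributes $-\tfrac12\cot\psi$ to the entry attached to one of its edges, where $\psi$ is the angle of $T$ opposite that edge. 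A preliminary step is to record the six sub-triangles into which $M,N,P$ divide $\Delta ABC$ (as in Fig.~\ref{fig:degenerate_triangle}) together with their angles as explicit functions of $\alpha$: the three sub-triangles adjacent to the edge $AC$ are thin isosceles triangles with angles $\alpha,\alpha,\pi-2\alpha$, while the other three sub-triangles keep all their angles bounded away from $0$ and $\pi$ uniformly for $\alpha$ near $0$ (they converge to nondegenerate triangles as $\alpha\to0$).

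For the block $\mathbf A$: the functions $\widetilde\varphi_B,\widetilde\varphi_C,\widetilde\varphi_A$ are the standard $\op{P}_1$ nodal functions of the coarse mesh (linear on $\Delta ABC$, ordinary hats on the surrounding three-line mesh), hence $\mathbf A$ is exactly the coarse-mesh stiffness matrix restricted to the three nodes $B,C,A$, all of which are interior vertices of the coarse mesh. Evaluating \eqref{eq:formula1triangle}--\eqref{eq:formula2triangle} on that mesh gives the diagonal value $4$ and the stated off-diagonal entries, according to whether the two triangle angles adjacent to the corresponding edge of $\Delta ABC$ sum to less than $\pi$ (giving $-1$) or to exactly $\pi$ (giving $0$). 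For the coarse--fine block $\mathbf B$ the decisive observation is that, for $\alpha>0$, the functions $\varphi_N$ and $\varphi_P$ are supported in the interior of $\Delta ABC$ and vanish on $\partial(\Delta ABC)$, whereas $\nabla\widetilde\varphi_i$ is a constant vector on $\Delta ABC$ for $i\in\{B,C,A\}$; the divergence theorem then gives
\begin{equation*}
a_E(\widetilde\varphi_i,\varphi_N)=\nabla\widetilde\varphi_i\cdot\int_{\Delta ABC}\nabla\varphi_N
=\nabla\widetilde\varphi_i\cdot\int_{\partial(\Delta ABC)}\varphi_N\,\mathbf n\,ds=0 ,
\end{equation*}
and likewise $a_E(\widetilde\varphi_i,\varphi_P)=0$, so the second and third columns of $\mathbf B$ vanish. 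The first column is obtained from the same identity with the nonzero boundary term $\int_{AC}\varphi_M\,\mathbf n\,ds$ carried by $\varphi_M$ (also accounting for the piece of $\mathrm{supp}\,\varphi_M$ on the far side of $AC$, where $M$ has to be resolved to keep the mesh conforming, and where $\widetilde\varphi_A,\widetilde\varphi_C$ are nonzero), which yields the entries $\tfrac12,\tfrac12,-\tfrac12$.

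For the fine--fine block $\mathbf C$, which is the restriction of the fine stiffness matrix to $\{M,N,P\}$, I would sum the contributions of the six sub-triangles. Only the three thin sub-triangles adjacent to $AC$ produce singular terms: by the angle list above each such contribution is a constant multiple of one of $\cot\alpha$, $\cot2\alpha$, $\csc2\alpha$, and since $\cot\alpha=\alpha^{-1}+O(\alpha)$ and $\cot2\alpha=\tfrac12\alpha^{-1}+O(\alpha)=\csc2\alpha+O(\alpha)$ (equivalently $\csc 2\alpha = \tfrac12\alpha^{-1}+O(\alpha)$), each such term equals $\alpha^{-1}$ times an explicit constant plus $O(\alpha)$. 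The three nondegenerate sub-triangles contribute quantities that are smooth, hence bounded, in $\alpha$ near $0$, i.e.\ $O(1)=\alpha^{-1}O(\alpha)$. Collecting the $\alpha^{-1}$-coefficients entry by entry produces exactly the matrix $\mathbf C_0$ of \eqref{eq:smallstiffnessmatdetails1} (for which one also checks $\det\mathbf C_0=\tfrac14\neq0$), and the remainder is $\alpha^{-1}\mathbf O(\alpha)$, which is precisely \eqref{eq:Cform}.

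The main obstacle is the bookkeeping in this last step: correctly enumerating the six sub-triangles and their $\alpha$-dependent angles, cleanly separating the order-$\alpha^{-1}$ contributions from the order-$1$ ones, and verifying that the ``regular'' sub-triangles genuinely stay nondegenerate as $\alpha\to0$, so their contributions are uniformly $O(1)$. A related subtlety is pinning down the local mesh around the edge $AC$ --- in particular how $\varphi_M$ is continued across $AC$ --- since this affects the $O(1)$ part of the $M$-row and $M$-column of $\mathbf C$, as well as the exact values in the first column of $\mathbf B$; one must check that it does not alter the stated entries of $\mathbf B$ nor the leading matrix $\mathbf C_0$.
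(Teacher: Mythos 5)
Your proposal is correct and follows essentially the same route as the paper: block-by-block evaluation via the angle formulas \eqref{eq:formula1triangle}--\eqref{eq:formula2triangle}, the vanishing of $a_E(\widetilde{\varphi}_X,\varphi_Y)$ for $Y\in\{N,P\}$ from the linearity of the coarse functions on $\Delta ABC$ together with $\varphi_N,\varphi_P$ vanishing on $\partial(\Delta ABC)$, and isolation of the $\alpha^{-1}$ contributions of the three thin isosceles sub-triangles from the $O(1)$ contributions of the nondegenerate ones. The only cosmetic difference is in the $\varphi_M$ column of $\mat{B}$, where you invoke the divergence theorem with the boundary term on the edge through $M$ while the paper integrates $\partial_y\varphi_M$ by Fubini and then uses the partition of unity $\widetilde{\varphi}_B+\widetilde{\varphi}_C+\widetilde{\varphi}_A\equiv 1$; these are the same computation, and you correctly flag the one real subtlety, namely tracking how $\varphi_M$ and the coarse functions behave on the triangles on the far side of that edge.
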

% PROOF
We postpone the technical proof of Lemma~\ref{lma:smallstiffnessmattilde} to the end of the section.% in Appendix~\ref{sec:appendix}.

\begin{figure}[!h]
\begin{center}
        \includegraphics[width=5in]{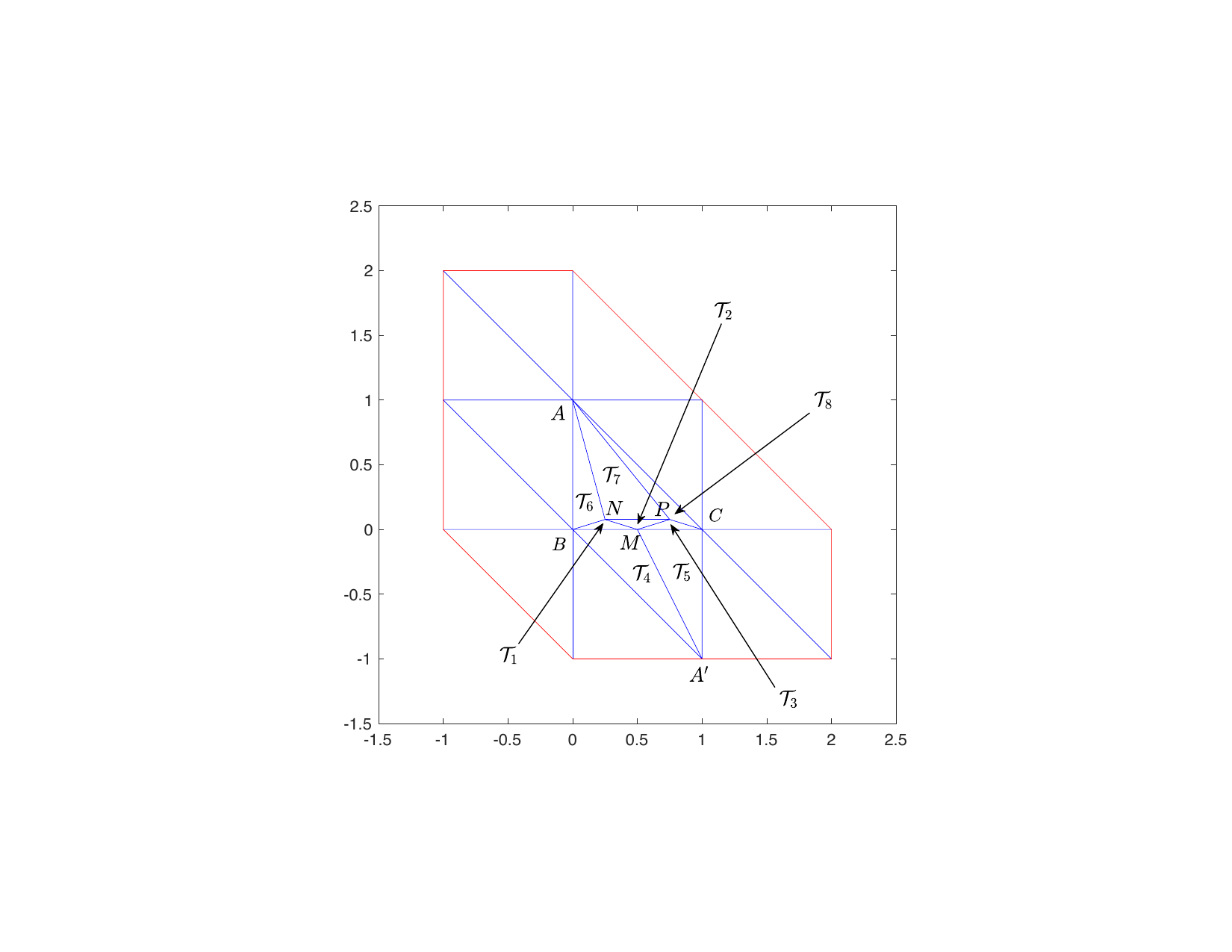}
\end{center}
\caption{Patch $E$ around triangle with degenerate mesh}
\label{fig:patch_around_triangle}
\end{figure}

%%%%%%%%% LEMMA %%%%%%%%%
\begin{lemma}
\label{lma:smallstiffnessmat}
The matrix ${\mat{S}}$ has the form 
\begin{equation}
\label{eq:smallstiffnessmat}
\mat{S} = \mat{E}\widetilde{\mat{S}}\mat{E}^T,
\end{equation}
where 
\begin{equation}
\label{eq:smallstiffnessmatBR}
\mat{E} =  \begin{bmatrix}\mat{I}& -\mat{R}\\\mat{0}& \mat{I}\end{bmatrix},\ \mathrm{with}\ 
\mat{R} =  \mat{R}_0 + \mat{O}(\alpha)\ \ \mathrm{and}\ \ 
\mat{R}_0 =\begin{bmatrix}1/2& 3/4& 1/4\\1/2& 1/4 & 3/4\\ 0& 0 & 0\end{bmatrix}.
% = \mat{R}_0 + \mat{O}(\alpha).
\end{equation}
\end{lemma}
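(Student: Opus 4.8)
The plan is to read \eqref{eq:smallstiffnessmat} as the congruence law for the stiffness matrix under the change of basis $\op{B}\to\widetilde{\op{B}}$ in $\op{V}$, and to exhibit the (very simple) transition matrix explicitly. The coarse mesh underlying $\widetilde{\op{B}}$ is the fine mesh with the nodes $M,N,P$ deleted, so every coarse basis function is again piecewise linear on the fine mesh, i.e.\ lies in $\op{V}$, and $\varphi_M,\varphi_N,\varphi_P$ belong to both bases. For $X\in\{A,B,C\}$ the difference $\widetilde\varphi_X-\varphi_X$ is a fine finite element function vanishing at every coarse vertex (both functions equal $\delta_{XX'}$ there) and equal to $\widetilde\varphi_X(Y)$ at each $Y\in\{M,N,P\}$ (since $\varphi_X(Y)=0$); as a piecewise linear function is determined by its nodal values, this forces
\[
\widetilde\varphi_X \;=\; \varphi_X \;+\; \sum_{Y\in\{M,N,P\}}\widetilde\varphi_X(Y)\,\varphi_Y .
\]
Hence, in the ordering used for $\widetilde{\mat{S}}$ (the three coarse functions first, then $\varphi_M,\varphi_N,\varphi_P$), the matrix $\mat{Q}$ writing $\widetilde{\op{B}}$ in terms of $\op{B}$ is block upper triangular, $\mat{Q}=\bigl[\begin{smallmatrix}\mat{I}&\mat{R}\\\mat{0}&\mat{I}\end{smallmatrix}\bigr]$ with $\mat{R}_{XY}=\widetilde\varphi_X(Y)$, so $\mat{E}:=\mat{Q}^{-1}=\bigl[\begin{smallmatrix}\mat{I}&-\mat{R}\\\mat{0}&\mat{I}\end{smallmatrix}\bigr]$, the form claimed in \eqref{eq:smallstiffnessmatBR}.

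Next I would evaluate the nine entries of $\mat{R}$. On $\Delta ABC$ the function $\widetilde\varphi_X$ is the linear function equal to $1$ at $X$ and $0$ at the other two vertices, i.e.\ $1-x-y,\ y,\ x$ for $X=A,B,C$; thus $\mat{R}_{XY}$ is one of these evaluated at $M=(\tfrac12,0)$, $N=(\tfrac14,\tfrac{\tan\alpha}{4})$, or $P=(\tfrac34,\tfrac{\tan\alpha}{4})$. The value at $M$ is a fixed rational, while those at $N$ and $P$ have the form $c_0+c_1\tan\alpha$; since $\tan\alpha=O(\alpha)$ this gives $\mat{R}=\mat{R}_0+\mat{O}(\alpha)$ with $\mat{R}_0$ the matrix of $\alpha\to 0$ limits, and direct substitution reproduces exactly the $\mat{R}_0$ of \eqref{eq:smallstiffnessmatBR}.

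For the congruence, $\mat{S}$ and $\widetilde{\mat{S}}$ are the Gram matrices of $a_E(\cdot,\cdot)$ in the bases $\op{B}$ and $\widetilde{\op{B}}$; from $\widetilde\varphi_i=\sum_j\mat{Q}_{ij}\varphi_j$ one gets $\widetilde{\mat{S}}=\mat{Q}\,\mat{S}\,\mat{Q}^{T}$, hence $\mat{S}=\mat{Q}^{-1}\widetilde{\mat{S}}\,\mat{Q}^{-T}=\mat{E}\,\widetilde{\mat{S}}\,\mat{E}^{T}$, which is \eqref{eq:smallstiffnessmat}. The only step demanding real care is the displayed identity for $\widetilde\varphi_X$: one must verify that the hierarchical correction involves \emph{only} the three bubbles $\varphi_M,\varphi_N,\varphi_P$ and no other fine basis function of the patch $E$, which is exactly where the local mesh structure — coarse and fine basis functions coinciding away from $\Delta ABC$ — enters; the rest is index bookkeeping and a one-line interpolation.
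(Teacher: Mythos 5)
Your argument is correct and is essentially the paper's own proof: the paper likewise writes $\widetilde{\varphi}_X=\varphi_X+\sum_{Y\in\{M,N,P\}}\widetilde{\varphi}_X(Y)\,\varphi_Y$ for $X\in\{A,B,C\}$, reads off the block-unitriangular transition matrix, and obtains \eqref{eq:smallstiffnessmat} from the Gram-matrix congruence $\mat{S}_{ij}=\sum_{k,l}\mat{E}_{jk}\mat{E}_{il}\widetilde{\mat{S}}_{lk}$; your detour through $\mat{Q}=\mat{E}^{-1}$ is the same computation read in the opposite direction, and your justification of why only $\varphi_M,\varphi_N,\varphi_P$ appear in the hierarchical correction is exactly the point the paper relies on. One caution on the final numerical step: with the coordinates as printed in Section~5.3 ($A=(0,0)$, $B=(0,1)$, $C=(1,0)$), your formulas $\widetilde{\varphi}_A=1-x-y$, $\widetilde{\varphi}_B=y$, $\widetilde{\varphi}_C=x$ evaluated at $M,N,P$ give the rows $(0,0,0)$, $(1/2,1/4,3/4)$, $(1/2,3/4,1/4)$ in the ordering $B,C,A$, which is a row permutation of the displayed $\mat{R}_0$, not $\mat{R}_0$ itself; the displayed matrix, and the appendix computations it must match (e.g.\ $a_E(\varphi_M,\widetilde{\varphi}_B)=1/2$ and the description of $\varphi_M(x,0)$ as a nodal basis function on $\overline{BC}$), presuppose that $B$ and $C$ are the two base vertices flanking $M$ and that $A$ is the apex, so the coordinate list in the text has $A$ and $B$ interchanged. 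This is a typo in the paper rather than a gap in your reasoning, but as written your claim that direct substitution ``reproduces exactly'' $\mat{R}_0$ does not hold with the coordinates you used, so you should fix the labeling before asserting it.
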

%%% PROOF GOING TO APPENDIX
\begin{proof}Referring to Figures~\ref{fig:degenerate_triangle} and~\ref{fig:patch_around_triangle}, note that
\begin{eqnarray*}
\widetilde{\varphi}_B& =& {\varphi}_B + \widetilde{\varphi}_B(M) {\varphi}_M + \widetilde{\varphi}_B(N) {\varphi}_N + 
\widetilde{\varphi}_B(P) {\varphi}_P\\
& =& {\varphi}_B + \frac{1}{2} {\varphi}_M + \left(\frac{3}{4}+O(\alpha)\right) {\varphi}_N + 
\left(\frac{1}{4}+O(\alpha)\right) {\varphi}_P,
\end{eqnarray*}
where we used the fact that $N=N(\alpha) \to (1/4,0)$ as $\alpha\to 0$, 
showing that $\lim_{\alpha\to 0} \widetilde{\varphi}_B(N) = 3/4$. Due to the smoothness of the function 
$\alpha\mapsto \widetilde{\varphi}_B(N(\alpha))$, we conclude that  $\widetilde{\varphi}_B(N) = 3/4 + O(\alpha)$. 
Similarly, $\widetilde{\varphi}_B(P) = 1/4 + O(\alpha)$. Following the same 
line of arguments we have 
{\color{black}
\begin{eqnarray*}
\widetilde{\varphi}_C& =& {\varphi}_C + \widetilde{\varphi}_C(M) {\varphi}_M + \widetilde{\varphi}_C(N) {\varphi}_N + 
\widetilde{\varphi}_C(P) {\varphi}_P\\
& =& {\varphi}_C + \frac{1}{2} {\varphi}_M + \left(\frac{1}{4}+O(\alpha)\right) {\varphi}_N + 
\left(\frac{3}{4}+O(\alpha)\right) {\varphi}_P,
\end{eqnarray*}
}
and
\begin{eqnarray*}
\widetilde{\varphi}_A& =& {\varphi}_A + \widetilde{\varphi}_A(M) {\varphi}_M + \widetilde{\varphi}_A(N) {\varphi}_N + 
\widetilde{\varphi}_A(P) {\varphi}_P\\
& =& {\varphi}_A + O(\alpha) {\varphi}_N + O(\alpha) {\varphi}_P.
\end{eqnarray*}
If we redenote the basis functions (for the purpose of temporarily replacing the letter indices with numbers)
${\op{B}} = \{{\psi}_i\}_{i=1,\dots,6}$ and $\widetilde{\op{B}} = \{\widetilde{\psi}_i\}_{i=1,\dots,6}$,
then 
\begin{eqnarray*}
\psi_i = \sum_{j=1}^6 \mat{E}_{ij} \widetilde{\psi}_j,\ \ i=1,\dots,6,\ \ \mathrm{with}
\end{eqnarray*}
\begin{eqnarray*}
\mat{E} =  
\begin{bmatrix}
{\hspace{7pt}}1{\hspace{7pt}}& {\hspace{7pt}}0{\hspace{7pt}}& {\hspace{7pt}}0{\hspace{7pt}}& -\varphi_B(M)& -\varphi_B(N)& -\varphi_B(P)\\
0& 1& 0& -\varphi_C(M)& -\varphi_C(N)& -\varphi_C(P)\\
0& 0& 1& -\varphi_A(M)& -\varphi_A(N)& -\varphi_A(P)\\
0& 0& 0& 1& 0& 0\\
0& 0& 0& 0& 1& 0\\
0& 0& 0& 0& 0& 1
\end{bmatrix}
\end{eqnarray*}

Therefore,
\begin{eqnarray*}
\mat{S}_{ij} & =&  a_E(\psi_j,\psi_i) = 
a_E\left(\sum_{k=1}^6\mat{E}_{jk}\widetilde{\psi}_k,\sum_{k=1}^6\mat{E}_{il}\widetilde{\psi}_l\right)
=\sum_{k,l=1}^6\mat{E}_{jk} \mat{E}_{il}\: a_E\left(\widetilde{\psi}_k,\widetilde{\psi}_l\right)\\
& = & \sum_{k,l=1}^6\mat{E}_{jk} \mat{E}_{il} \widetilde{\mat{S}}_{lk}  = 
\left(\mat{E} \widetilde{\mat{S}}\mat{E}^T\right)_{ij},
\end{eqnarray*}
showing~\eqref{eq:smallstiffnessmat}.
\end{proof}
%[Some numerics in support of the result?]

We can now proceed to prove of Theorem~\ref{thm:degenerate_triangle}.
%%%%%%%%% LEMMA %%%%%%%%%
%\begin{lemma}
%\label{lma:smallstiffnessmatinv}
%There exists a singular matrix ${\mat{T}_0}>\mat{0}$ so that
%\begin{equation}
%  \label{eq:limstiffex3}
%  \lim_{\alpha\to 0} \mat{S}^{-1} = \mat{T}_0. 
%\end{equation}
%\end{lemma}
\begin{proof}
Using~\eqref{eq:smallstiffnessmat} we get
\begin{equation}
\label{eq:smallstiffnessmatinv}
\mat{S}^{-1} = \mat{E}^{-T}\widetilde{\mat{S}}^{-1}\mat{E}^{-1}.
\end{equation}
For computing $\widetilde{\mat{S}}^{-1}$ we use the block inversion formula 
\begin{equation}
\label{eq:blockinverse}
\widetilde{\mat{S}}^{-1} = 
\begin{bmatrix} 
\mat{I} & \mat{0} \\
\mat{D}^T  & \mat{I}
\end{bmatrix}\ 
\begin{bmatrix} 
{\widehat{\mat{S}}}^{-1} & \mat{0} \\
\mat{0}  & {\mat{C}}^{-1} 
\end{bmatrix}\ 
\begin{bmatrix} 
\mat{I} & \mat{D} \\
\mat{0}  & \mat{I}
\end{bmatrix}
\end{equation}
with 
$\widehat{\mat{S}} = {\mat{A}} - 
{\mat{B}} {\mat{C}}^{-1}{\mat{B}}^T$ 
being the Schur complement of ${\mat{C}}$,  and
$\mat{D} = -{\mat{B}}{\mat{C}}^{-1}$.
Note that $\alpha \mat{C} \approx \mat{C_0}$, hence 
$\alpha^{-1}\mat{C}^{-1} \approx \mat{C_0}^{-1}$, showing that
\begin{eqnarray*}
\mat{C}^{-1} = \alpha\left(\mat{C_0}^{-1}+\mat{O}(\alpha)\right) = 
\alpha
\left(
\begin{bmatrix} 
6 &  4 &   4\\
4 & 7/2 &  5/2\\
4 & 5/2 & 7/2
\end{bmatrix}
+\mat{O}(\alpha)\right) .
\end{eqnarray*}
Since $\mat{B} = \mat{O}(1)$ and $\mat{C}^{-1}  = \mat{O}(\alpha)$, it follows that
\begin{eqnarray*}
\widehat{\mat{S}} = {\mat{A}} - {\mat{B}} {\mat{C}}^{-1}{\mat{B}}^T = {\mat{A}} + \mat{O}(\alpha),
\end{eqnarray*}
showing that $\widehat{\mat{S}} \approx {\mat{A}}$. Hence, 
\begin{eqnarray}
\label{eq:Ainv}
\widehat{\mat{S}}^{-1} \approx
\mat{A}^{-1} = 
\begin{bmatrix} 
2/7  &  1/14 &  1/14\\
1/14 & 15/56 &  1/56\\
1/14 &  1/56 & 15/56
\end{bmatrix} > \mat{0}.
\end{eqnarray}
Therefore, using~\eqref{eq:blockinverse} we get
\begin{eqnarray*}
\label{eq:blockinverseeplicit}
&&\widetilde{\mat{S}}^{-1} =
\begin{bmatrix} 
\mat{I} & \mat{0} \\
\mat{D}^T  & \mat{I}
\end{bmatrix}\ 
\begin{bmatrix} 
{\widehat{\mat{S}}}^{-1} & \mat{0} \\
\mat{0}  & {\mat{C}}^{-1} 
\end{bmatrix}\ 
\begin{bmatrix} 
\mat{I} & \mat{D} \\
\mat{0}  & \mat{I}
\end{bmatrix}
%&=& 
%\begin{bmatrix} 
%\mat{I} & \mat{0} \\
%\mat{D}^T  & \mat{I}
%\end{bmatrix}\ 
%\begin{bmatrix} 
%{\widehat{\mat{S}}}^{-1} & {\widehat{\mat{S}}}^{-1}\mat{D} \\
%\mat{0}  & {\mat{C}}^{-1} 
%\end{bmatrix}
=
\begin{bmatrix} 
{\widehat{\mat{S}}}^{-1} & {\widehat{\mat{S}}}^{-1}\mat{D} \\
\mat{D}^T {\widehat{\mat{S}}}^{-1} &  {\mat{C}}^{-1} + \mat{D}^T {\widehat{\mat{S}}}^{-1}\mat{D}
\end{bmatrix}
\end{eqnarray*}
Note that
\begin{eqnarray*}
\mat{D} &=& -{\mat{B}}{\mat{C}}^{-1} = 
-\alpha\begin{bmatrix} {1}/{2}& 0& 0\\{1}/{2}& 0& 0\\-{1}/{2}& 0& 0 \end{bmatrix}
\left(\begin{bmatrix} 
6 &  4 &   4\\
4 & 7/2 &  5/2\\
4 & 5/2 & 7/2
\end{bmatrix}
+\mat{O}(\alpha)\right)\\
&=&
\alpha
\left(\begin{bmatrix}
-3& -2& -2\\-3& -2& -2\\3& 2& 2
\end{bmatrix}+\mat{O}(\alpha)\right) = \alpha \left(\mat{D}_0 + \mat{O}(\alpha)\right).
\end{eqnarray*}
Therefore
\begin{eqnarray*}
{\mat{C}}^{-1} + \mat{D}^T {\widehat{\mat{S}}}^{-1}\mat{D}  = {\alpha}(\mat{C}^{-1}_0+\mat{O}(\alpha)) + \mat{O}(\alpha^2)
={\alpha}(\mat{C}^{-1}_0+\mat{O}(\alpha)).
\end{eqnarray*}
Hence
\begin{eqnarray}
\label{eq:blockinverseeplicit2}
&&\widetilde{\mat{S}}^{-1} =
\begin{bmatrix} 
{{\mat{A}}}^{-1} + \mat{O}(\alpha) & \alpha {{\mat{A}}}^{-1}\left(\mat{D}_0 + \mat{O}(\alpha)\right) \\
\alpha \left(\mat{D}^T_0 + \mat{O}(\alpha)\right) {{\mat{A}}}^{-1} &  {\alpha}(\mat{C}^{-1}_0+\mat{O}(\alpha))
\end{bmatrix}
\end{eqnarray}
Putting together~\eqref{eq:smallstiffnessmatBR},~\eqref{eq:smallstiffnessmatinv}, and~\eqref{eq:blockinverseeplicit2}
we get
\begin{eqnarray*}
\mat{S}^{-1} 
&=&
%
%\begin{bmatrix}\mat{I}& \mat{0}\\\mat{R}^T_0 + \mat{O}(\alpha)& \mat{I}\end{bmatrix}
%
%\begin{bmatrix} 
%{{\mat{A}}}^{-1} + \mat{O}(\alpha) & \alpha {{\mat{A}}}^{-1}\left(\mat{D}_0 + \mat{O}(\alpha)\right) \\
%\alpha \left(\mat{D}^T_0 + \mat{O}(\alpha)\right) {{\mat{A}}}^{-1} &  {\alpha}(\mat{C}^{-1}_0+\mat{O}(\alpha))
%\end{bmatrix}
%
%\begin{bmatrix}\mat{I}& \mat{R}_0 + \mat{O}(\alpha)\\\mat{0}& \mat{I}\end{bmatrix}\\
%&=&
%
%\begin{bmatrix}\mat{I}& \mat{0}\\\mat{R}^T_0 + \mat{O}(\alpha)& \mat{I}\end{bmatrix}
%
%\begin{bmatrix} 
%{{\mat{A}}}^{-1} + \mat{O}(\alpha) & 
%{{\mat{A}}}^{-1} \mat{R}_0 + \mat{O}(\alpha) \\
%\alpha \left(\mat{D}^T_0 + \mat{O}(\alpha)\right) {{\mat{A}}}^{-1} &  
%\alpha \left[\mat{D}^T_0 {{\mat{A}}}^{-1} \mat{R}_0 +
%\mat{C}^{-1}_0+\mat{O}(\alpha)\right]
%\end{bmatrix}\\
%&=&
\begin{bmatrix} 
{{\mat{A}}}^{-1} &
{{\mat{A}}}^{-1} \mat{R}_0\\
\mat{R}^T_0 {\mat{A}}^{-1}&
\mat{R}^T_0 {\mat{A}}^{-1} \mat{R}_0
\end{bmatrix} +\mat{O}(\alpha) = \mat{T}_0 + \mat{O}(\alpha).
\end{eqnarray*}
A direct computation shows that
\begin{eqnarray*}
\mat{A}^{-1} \mat{R}_0 = 
\begin{bmatrix} 
5/28 &  13/56 &  1/8\\
19/112 & 27/224 & 7/32\\
5/112& 13/224 & 1/32
\end{bmatrix}
\end{eqnarray*}
and 
\begin{eqnarray*}
\mat{R}^T_0\mat{A}^{-1} \mat{R}_0 = 
\begin{bmatrix} 
39/224 &  79/448 &  11/64\\
79/448 & 183/896 & 19/128\\
11/64 &   19/128 & 25/128
\end{bmatrix},
\end{eqnarray*}
showing, together with~\eqref{eq:Ainv}, 
that $\mat{T}_0 >\mat{0}$. Since $\mat{S}^{-1}  = \mat{T}_0  +\mat{O}(\alpha)$, we have 
$$
\lim_{\alpha\to 0} \mat{S}^{-1}  = \mat{T}_0.
$$
Note that $\mat{T}_0$ is singular, {\color{black} and has rank 3, hence is rank-3 deficient}. The computation has been validated numerically against stiffness matrix computations using 
standard finite element codes.
\end{proof}
%Lemma~\ref{lma:smallstiffnessmatinv} shows consistent positivity of the local discrete Green's function for small $\alpha$:

We return to the proof of Lemma~\ref{lma:smallstiffnessmattilde}.
\begin{proof}
The fact that the ${\mat{A}}$ block has the form~\eqref{eq:smallstiffnessmatdetails1} is a simple 
consequence of the formulas~\eqref{eq:formula2triangle} and~\eqref{eq:formula1triangle}, and are well known for a uniform 
grid. Due to the fact  $\widetilde{\varphi}_A$, $\widetilde{\varphi}_B$, $\widetilde{\varphi}_C$ are linear on 
$\Delta ABC$, and $\varphi_N, \varphi_P$ vanish on $\partial (\Delta ABC)$, we have 
\begin{eqnarray*}
a_E(\widetilde{\varphi}_X, \varphi_Y) = 0,\ \ \forall X \in \{A, B, C\},\ \  Y \in \{N, P\}.
\end{eqnarray*}
(see also Lemma~5.5 in~\cite{MR2085400}), thus justifying all the six zero-entries in the matrix~${\mat{B}}$.
The remaining non-trivial entries are associated with  the set $\{\varphi_M, \varphi_N, \varphi_P\}$. 

{\underline {Entries related to ${\varphi}_N, {\varphi}_P$}:}
This case has been discussed in Lemma~5.6 in~\cite{MR2085400}; for completeness we review the computation,
which is simplified because we are interested primarily in the case when $0< \alpha \ll 1$. Refer to 
Figure~\ref{fig:patch_around_triangle} for notation.
We have
\begin{eqnarray*}
a_E(\varphi_N, \varphi_N) &= &\int_{\op{T}_1\cup \op{T}_2\cup \op{T}_6\cup\op{T}_7} |\nabla\varphi_N|^2 \\
&= &\frac{\sin(\pi-2 \alpha)}{2 \sin^2\alpha} + \frac{\sin\alpha}{2 \sin(\pi-2 \alpha) \sin\alpha} + 
\int_{\op{T}_6\cup\op{T}_7} |\nabla \varphi_N|^2\\
& = & \frac{\cos \alpha}{\sin\alpha} + \frac{1}{2 \sin(2 \alpha)} + O(1) = 
\frac{1}{\alpha}\left(\frac{5}{4} + O(\alpha) \right).
\end{eqnarray*}
Similarly,
\begin{eqnarray*}
a_E(\varphi_P, \varphi_P) &= & 
\frac{1}{\alpha}\left(\frac{5}{4} + O(\alpha) \right).
\end{eqnarray*}
Furthermore, if $\gamma$ denotes the angle $\sphericalangle NAP$, then
\begin{eqnarray*}
a_E(\varphi_N, \varphi_P)& = & -\frac{\sin(\gamma+\pi-2\alpha)}{2 \sin \gamma \sin(\pi-2\alpha)} 
 =  -\frac{\sin(2\alpha-\gamma)}{2 \sin \gamma \sin(2\alpha)} \\
&= &
%-\frac{\sin(2\alpha)\cos \gamma - \cos(2\alpha)\sin \gamma}{2 \sin \gamma \sin(2\alpha)} =  
-\frac{\cos \gamma}{2 \sin \gamma}
+\frac{\cos(2\alpha)}{2 \sin(2\alpha)} = \frac{1}{\alpha}\left(\frac{1}{4} + O(\alpha) \right)
\end{eqnarray*}
because $\lim_{\alpha\to 0} \gamma(\alpha) = \gamma_0\in (0,\pi/4)$, showing that the expression involving
$\gamma$ is bounded as $\alpha\to 0$.

{\underline {Entries related to ${\varphi}_M$}:}
The quantities $a_E(\varphi_M, \widetilde{\varphi}_X)$
with $X \in \{A, B, C\}$ cannot be computed using~\eqref{eq:formula2triangle}, because the latter are 
not nodal basis functions with respect to the finer mesh.
First note that $\nabla \widetilde{\varphi}_A = e_2$, showing that 
\begin{eqnarray*}
a_E(\varphi_M, \widetilde{\varphi}_A)& = & \int_{\op{T}_1\cup  \op{T}_2\cup \op{T}_3} \partial_y\varphi_M.
\end{eqnarray*}
Let $g:[0,1]\to E$ be the piecewise linear function whose graph is represented by 
the contour $BNPC$. Using Fubini's theorem and the fact that $\varphi_M(g(x)) = 0$, 
we obtain
\begin{eqnarray*}
\int_{\op{T}_1\cup  \op{T}_2\cup \op{T}_3} \partial_y\varphi_M & = &
\int_0^1 dx \int_0^{g(x)}\partial_y\varphi_M(x,y) = - \int_0^1 \varphi_M(x,0) dx = -\frac{1}{2},
\end{eqnarray*}
because $\varphi_M(x,0)$ is a one-dimensional, piecewise linear nodal basis function on $\overline{BC}$.
Therefore $\widetilde{{S}}_{34} = \widetilde{{S}}_{43}=-1/2$.
For similar reasons, if $A'=(1,-1)$ denotes the vertex on $\partial E$ 
directly below $C$ (see Figure~\ref{fig:patch_around_triangle}), and $\widetilde{\varphi}_{A'}$
is the coarse nodal basis function associated with $A'$, then
\begin{eqnarray*}
a_E(\varphi_M, \widetilde{\varphi}_{A'}) &=& \int_{\op{T}_4\cup \op{T}_5} 
\nabla {\varphi}_{M} \cdot \nabla \widetilde{\varphi}_{A'} = -\frac{1}{2}.
\end{eqnarray*}

Since $\mathrm{supp}(\varphi_M) = \bigcup_{i=1}^5 \op{T}_i 
\subseteq\mathrm{supp}(\widetilde{\varphi}_B)\cap \mathrm{supp}(\widetilde{\varphi}_C)$,
we have 
\begin{eqnarray*}
a_E(\varphi_M, \widetilde{\varphi}_X)& = & \int_{\op{T}_1\cup\dots\cup \op{T}_5} 
\nabla \varphi_M\cdot \nabla \widetilde{\varphi}_X,\ \ X\in \{B,C\}.
\end{eqnarray*}
Note that on $D_M^{(1)} = \op{T}_1\cup \op{T}_2 \cup \op{T}_3$ we have $\nabla \widetilde{\varphi}_C = e_1$, while on
$D_M^{(2)} = \op{T}_4\cup \op{T}_5$ we have  $\nabla \widetilde{\varphi}_B = -e_1$.
Hence, after using Fubini's theorem 
\begin{eqnarray}
\label{eq:Dm1}
&&\int_{D_M^{(1)}} \nabla \varphi_M \cdot \nabla \widetilde{\varphi}_C = 
\int_{D_M^{(1)}} \partial_x \varphi_M = \int_0^{\frac{1}{4}\tan \alpha } dy \int_{D_{M,y}^{(1)}} \partial_x  \varphi_M \ dx = 0,
\end{eqnarray}
where $D_{M,y}^{(1)}$  is the horizontal section of $D_{M}^{(1)}$ at level $y$, and we use the fact that $\varphi_M$
is zero at the end points of each horizontal section of $D_{M}^{(1)}$. Similarly,
\begin{eqnarray}
\label{eq:Dm2}
&& \int_{D_M^{(2)}} \nabla \varphi_M \cdot \nabla \widetilde{\varphi}_B = 
-\int_{D_M^{(2)}} \partial_x \varphi_M = -\int_{-1}^{0} dy \int_{D_{M,y}^{(2)}} \partial_x  \varphi_M \  dx = 0,
\end{eqnarray}
for the same reason. 
It remains that 
\begin{eqnarray}
\label{eq:MBC}
&& a_E(\varphi_M, \widetilde{\varphi}_B) =  \int_{D_M^{(1)}}
\nabla \varphi_M\cdot \nabla \widetilde{\varphi}_B,\ \ \ 
a_E(\varphi_M, \widetilde{\varphi}_C) =  \int_{D_M^{(2)}}
\nabla \varphi_M\cdot \nabla \widetilde{\varphi}_C.
\end{eqnarray}

On $\Delta ABC$ we have $\widetilde{\varphi}_B + \widetilde{\varphi}_C + \widetilde{\varphi}_A \equiv 1$, thus
(implicitly) this holds on $D_M^{(1)}$. So 
\begin{eqnarray*}
0 &= & \int_{D_M^{(1)}} \nabla \varphi_M \cdot \nabla (\widetilde{\varphi}_B + \widetilde{\varphi}_C + \widetilde{\varphi}_A)
\stackrel{\eqref{eq:Dm1}}{=} \int_{D_M^{(1)}} \nabla \varphi_M \cdot \nabla \widetilde{\varphi}_B + 
\int_{D_M^{(1)}}  \nabla \varphi_M \cdot \nabla \widetilde{\varphi}_A\\
& \stackrel{\eqref{eq:MBC}}{=} & a_E(\varphi_M, \widetilde{\varphi}_B) + a_E(\varphi_M, \widetilde{\varphi}_A),
\end{eqnarray*}
showing that 
\begin{eqnarray*}
a_E(\varphi_M, \widetilde{\varphi}_B) = \frac{1}{2}.
\end{eqnarray*}
On $\Delta A'BC$ we have $\widetilde{\varphi}_B + \widetilde{\varphi}_C + \widetilde{\varphi}_{A'} \equiv 1$, thus
(implicitly) this holds on $D_M^{(2)}$. So
\begin{eqnarray*}
0 &= & \int_{D_M^{(2)}} \nabla \varphi_M \cdot  \nabla (\widetilde{\varphi}_B + \widetilde{\varphi}_C + \widetilde{\varphi}_{A'})
\stackrel{\eqref{eq:Dm2}}{=} \int_{D_M^{(2)}} \nabla \varphi_M \cdot \nabla \widetilde{\varphi}_C + 
\int_{D_M^{(2)}}  \nabla \varphi_M \cdot \nabla \widetilde{\varphi}_{A'}\\
& \stackrel{\eqref{eq:MBC}}{=} & a_E(\varphi_M, \widetilde{\varphi}_C) + a_E(\varphi_M, \widetilde{\varphi}_{A'}),
\end{eqnarray*}
showing that 
\begin{eqnarray*}
a_E(\varphi_M, \widetilde{\varphi}_C) = \frac{1}{2},
\end{eqnarray*}
as well. Therefore, 
\begin{eqnarray*}
&&
\widetilde{S}_{14} = \widetilde{{S}}_{41}= \widetilde{{S}}_{24} = \widetilde{{S}}_{42}=1/2,
\end{eqnarray*}
which concludes the computation of the block ${\mat{B}}$ (recall that the other entries are $0$).
All these results can also be obtained by classical trigonometric arguments. Furthermore, using
the formulas~\eqref{eq:formula2triangle} and~\eqref{eq:formula1triangle} we obtain
\begin{eqnarray*}
&&a_E(\varphi_M, \varphi_N) = a_E(\varphi_M, \varphi_P) =  -\frac{\sin (2 \alpha)}{2 \sin^2\alpha}=
   %= -\frac{2\sin \alpha \cos \alpha }{2 \sin^2\alpha} = 
   -\frac{\cos \alpha }{ \sin\alpha} =-\frac{1}{\alpha} (1 + O(\alpha)).
\end{eqnarray*}
The last entry needed is
\begin{eqnarray*}
a_E(\varphi_M, \varphi_M) &=& \int_{\op{T}_1\cup \dots \cup \op{T}_5}|\nabla \varphi_M|^2 \\
&= & 
  \frac{1}{\sin(\pi-2\alpha)} + 
  \frac{\sin(\pi-2\alpha)}{2\sin^2 \alpha} + \overbrace{\int_{\op{T}_4\cup \op{T}_5}|\nabla \varphi_M|^2}^{O(1)}\\
  &=& \frac{3}{2\alpha} (1+O(\alpha)),
\end{eqnarray*}
which concludes the computation of~$\widetilde{\mat{S}}$.
\end{proof}

{\color{black}
\section{Limits of discrete Green's functions on some degenerate meshes}
\label{sec:appendix2}
In this section we revisit a class of two-dimensional, triangular meshes analyzed in Section~6 of~\cite{MR2085400}, 
for which the $\op{P}_1$-finite element solution of the Poisson equation with homogeneous Dirichlet boundary conditions
satisfies the DMP, while potentially violating the angle condition in many places.
The main purpose here is to expose the behavior of  the discrete Green's function  as the mesh becomes 
degenerate. The examples in this section are easier to analyze than the one in Theorem~\ref{thm:degenerate_triangle}.

\paragraph{\bf The case of a single interior node.}
The simplest nontrivial example of a finite element mesh is that of a triangular domain $D = \Delta ABC$ with a single
vertex $P$ in the interior. We choose $P$ so that the triangle $\Delta PBC$ is isosceles with base angles 
equal to $\theta$, as shown in Fig.~\ref{fig:triangle_degenrate_mesh} (left), although this is not essential.
With only one interior mesh vertex, the stiffness matrix is just the real number
{\color{black}$$\mat{A}(\theta) = a_D(\varphi_P,\varphi_P) = \int_{D} |\nabla \varphi_P|^2,$$}
where $\varphi_P$ is the nodal basis function associated with $P$.

We are interested in the behavior of the discrete Green's function as $\theta\to 0$. Cf~\eqref{eq:formula1triangle},
{\color{black}
$$
\mat{A}(\theta) \ge \int_{\op{T}_1} |\nabla \varphi_P|^2  = \frac{\sin(\pi-2\theta)}{2\sin^2 \theta} = \frac{2\sin\theta\cos\theta}{2\sin^2 \theta}
 = \cot \theta.
$$
}
Hence, 
\begin{equation}
\label{eq:convAPzero}
\lim_{\theta\to 0}(\mat{A}(\theta))^{-1} \le  \lim_{\theta\to 0}\tan \theta = 0.
\end{equation}
This shows that the discrete Green's function, which is represented by  a number, 
converges to the only singular $1\times 1$ matrix, namely the number 0.
Also note that the mass matrix of the mesh stays bounded as $\theta\to 0$.
Therefore, given a fixed right hand side $f$ for the Poisson equation on $D$ with the mesh described above,
the finite element solution $u(\theta)$ will converge to $0$ as $\theta \to 0$.
We remark that the limiting mesh, shown Fig.~\ref{fig:triangle_degenrate_mesh} (right), is a valid triangulation of $D$ with
no interior nodes. Note that the Poisson problem with homogeneous Dirichlet boundary conditions can also
be formulated on the associated finite element space, which contains only the function 0. Hence, the solution will be zero,
which is consistent with the limit of the solutions $u(\theta)$.

%Theorem~\ref{thm:degenerate_triangle}
\begin{figure}[!h]
\begin{center}
        \includegraphics[width=5.0in]{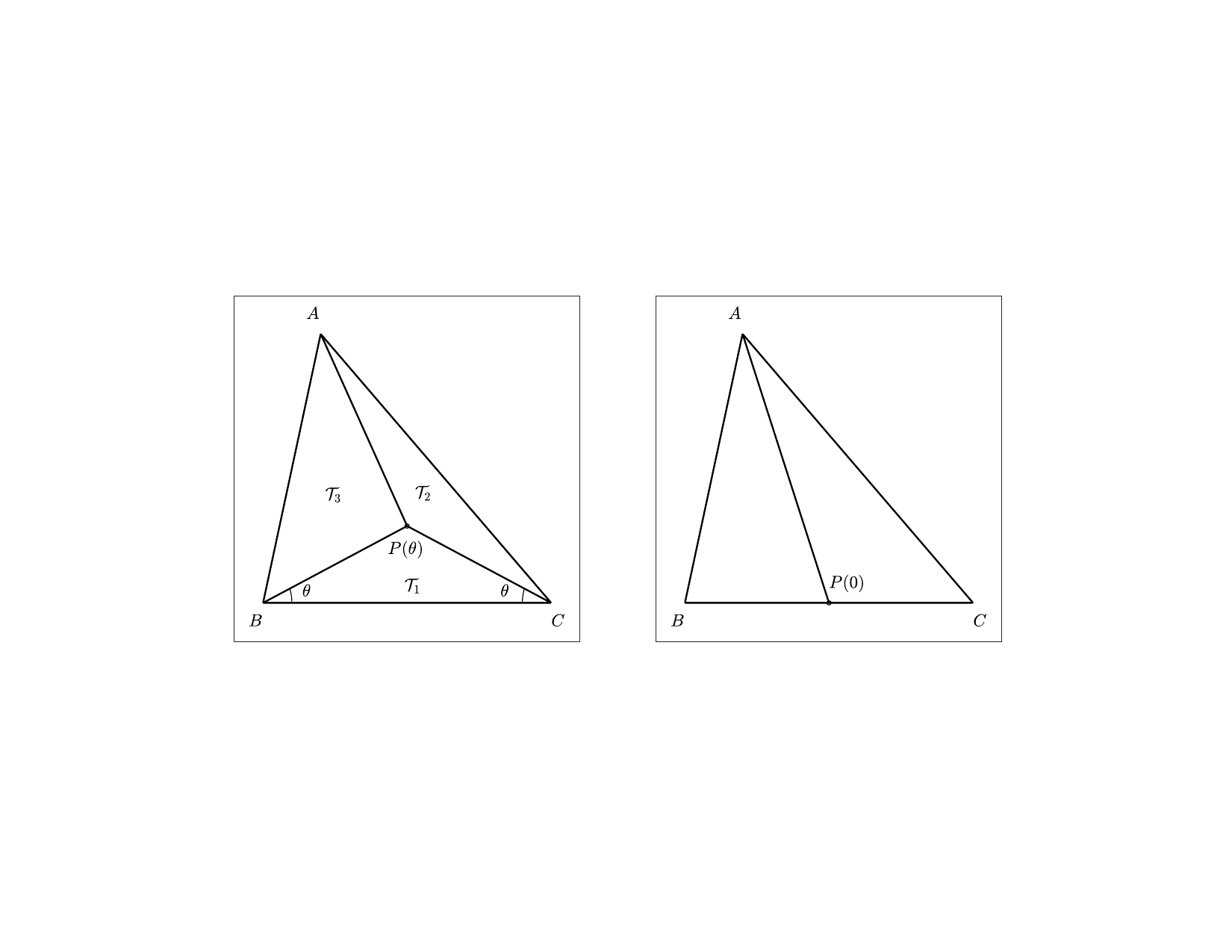}
\end{center}
\caption{As $\theta \to 0$ the mesh on the left converges to the mesh in the right image. When $\Delta A B C$ is
an interior triangle of another triangular mesh, the partition on the right will not be a valid mesh, since the triangle
below $\Delta A B C$ -- not pictured -- is not divided.}
\label{fig:triangle_degenrate_mesh}
\end{figure}

\paragraph{\bf An embedded divided triangle.} For the second example consider 
a triangular mesh $\op{T}_h$ on a polygonal domain $D$, and we subdivide
one interior triangle as in Fig.~\ref{fig:triangle_degenrate_mesh} (left), which results in a refined mesh 
$\tilde{\op{T}}_h = \tilde{\op{T}}_h(\theta)$.
If the discrete Green's function is positive (or just nonnegative) on $\op{T}_h$, 
it is shown in Section 6 of~\cite{MR2085400} that the same holds for $\tilde{\op{T}}_h$, regardless of the value of $\theta>0$. 
For $\theta = 0$, the subdivided triangle is shown in Fig.~\ref{fig:triangle_degenrate_mesh} (right). The resulting partition
of $D$ corresponding to $\theta = 0$, with $P(0)$ being the midpoint between $B$ and $C$, is not a valid triangulation,
because the triangle  in $\op{T}_h$ below  $\Delta A B C$ is not subdivided. A similar invalid partition is depicted 
in~\cite{MR2373954}, Fig.~3.12 (right), page 80.

The question we answer here is what happens to the discrete Green's function on $\tilde{\op{T}}_h(\theta)$ 
as $\theta\to 0$. We will describe this
limit in terms of the discrete Green's function on ${\op{T}}_h$, and we show it is represented by a rank-1 deficient matrix.
We denote by $\op{B} = \{\varphi_1,\dots,\varphi_n\}$ the nodal basis associated with the interior vertices of $\op{T}_h$, 
and let ${V}_0^h = \mathrm{span}(\op{B})$. Assume that $\varphi_{n+1}$ is the nodal basis function in 
$\tilde{\op{T}}_h$ associated with $P = P(\theta)$ 
(this was called $\varphi_P$ from the previous example), and $\varphi_{n-1}, \varphi_{n}$
correspond to $B$ and $C$, respectively. The set 
$\tilde{\op{B}} = \op{B} \cup \{\varphi_{n+1}\}$ forms a hierarchical basis for the finite element space $\tilde{V}_0^h$ on 
$\tilde{\op{T}}_h$. 
Cf.~\cite{MR2085400}, we have  $a_D(\varphi_i,\varphi_{n+1}) = 0$
for $i=1, \dots, n$, which leads to a decoupling of the linear system representing the Poisson equation when formulated 
in the hierarchical basis $\tilde{\op{B}}$. For $f\in (\tilde{V}_0^h)^*$, the system reads:
\begin{equation}
\label{eq:matpoisson}
%\tilde{\mat{A}} \tilde{\mat{x}}= 
\begin{bmatrix}
\mat{A} & 0\\
0 & \alpha
\end{bmatrix} 
\begin{bmatrix}
\mat{x}\\
x_{n+1}
\end{bmatrix} 
= 
\begin{bmatrix}
\mat{b}\\
b_{n+1}
\end{bmatrix},
\end{equation}
where $\mat{A}$ is the stiffness matrix of the problem in the basis $\op{B}$ (hence on $V_0^h$),
$\alpha = a_D(\varphi_{n+1},\varphi_{n+1})$,
with ${\mat{b}}\in \R^n$ given by
${\mat{b}}_i = \innprd{\varphi_{i}}{f}$ for $1\le i \le n$, and $b_{n+1} = \innprd{\varphi_{n+1}}{f}$.
The solution 
of~\eqref{eq:matpoisson} is given by
\begin{equation}
\label{eq:solpoisson}
u =  u(\theta) = \sum_{i=1}^{n+1} x_i \varphi_i,\ \ \mat{x} = \mat{A}^{-1} \mat{b},\ \ x_{n+1} =  {b}_{n+1}/\alpha.
\end{equation}
If $f$ is fixed and $\theta \to 0$, it follows from~\eqref{eq:convAPzero} that $x_{n+1} \to 0$. Hence, 
\begin{equation}
\label{eq:u0}
u_0 = \lim_{\theta\to 0} u(\theta) = \sum_{i=1}^{n} x_i \varphi_i \in V_h^0.
\end{equation}
It follows that 
\begin{equation}
\label{eq:u0mean}
u_0(P(0)) = \frac{1}{2}(u_0(B) + u_0(C)),
\end{equation}
showing that the limit as $\theta\to 0$ of all the finite element solutions on $\tilde{\op{T}}$ lie in a 
subspace of co-dimension 1. In other words, the degenerate partition can be considered ``harmless'', as the
limiting solution $u_0$ is simply the solution on the original mesh $\op{T}_h$.

In order to describe the matrix representation $\tilde{\mat{G}}(\theta)$ of the discrete Green's function 
on $\tilde{\op{T}}$ and its limit as $\theta\to 0$, 
we let $f^{(i)}$ be the Dirac impulse forcing at the $i^{\mathrm{th}}$ node, for $i\le n$.
Then in~\eqref{eq:solpoisson}  we have $\mat{b}=\mat{e}_i$ ($i^{\mathrm{th}}$ unit vector), and $b_{n+1} = 0$. 
Cf.~\eqref{eq:u0} the limiting solution 
$u_0^{(i)}$ lies in $V_h^0$, and satisfies the same equation as the discrete Green's function 
on $\op{T}_h$. Hence,~\eqref{eq:u0mean} implies that the vector representation  of $u_0^{(i)}$ in the 
basis $\tilde{\op{B}}$ is
$$
(\tilde{\mat{g}}^{(i)})^T = [(\mat{g}^{(i)})^T, \frac{1}{2}(\mat{g}^{(i)}_{n-1}+\mat{g}^{(i)}_{n})],
$$
where $\mat{g}^{(i)}$ is the $i^{\mathrm{th}}$ column of the discrete Green's function on $\op{T}_h$.

If $f^{(n+1)}$ is the Dirac impulse forcing at $P(\theta)$, then the corresponding right-hand side 
in~\eqref{eq:solpoisson} satisfies,
as $\theta\to 0$
$$\mat{b}=\frac{1}{2}\left(\mat{e}_{n-1} + \mat{e}_{n}\right),\ \ b_{n+1} = 1.$$
By~\eqref{eq:u0}, its vector representation
is 
$$
(\tilde{\mat{g}}^{(n+1)})^T = [\frac{1}{2}(\mat{g}^{(n-1)}+\mat{g}^{(n)})^T, \frac{1}{2}(\mat{g}^{(n)}_{n-1}+\mat{g}^{(n-1)}_{n})].
$$
Hence, if we denote by the matrix representation in $\tilde{\op{B}}$ of the limit of the discrete function as $\theta\to 0$
is 
$$
\tilde{\mat{G}}_0 = \lim_{\theta\to 0} \tilde{\mat{G}}(\theta) = 
\begin{bmatrix}
\mat{G}& \overline{g}\\
\overline{\mat{g}}^T& \tilde{g},
\end{bmatrix} \in \R^{(n+1)\times (n+1)}.
$$
where $\mat{G}$ is the representation of the discrete Green's function on the original mesh $\op{T}_h$, and
$$
\overline{\mat{g}} = \frac{1}{2}(\mat{g}^{(n-1)}+\mat{g}^{(n)}),\ \ \tilde{g} = \frac{1}{2}(\mat{g}^{(n-1)}_{n-1}+\mat{g}^{(n)}_{n}).
$$
We used the equality  $\mat{g}^{(n-1)}_{n} = \mat{g}^{(n)}_{n-1}$ to compute the last diagonal entry $\tilde{g}$, 
which follows from the symmetry of  the discrete Green's function $\mat{G}$.
Hence, the last column of $\tilde{\mat{G}}_0$ is the average
of the previous two, showing $\tilde{\mat{G}}_0$ has rank $n$.
By contrast, the matrix $\mat{T}_0$ in Theorem~\ref{thm:degenerate_triangle} has rank-3 deficiency.
}

{\color{black}
\section*{Acknowledgement} The authors sincerely thank the anonymous reviewers for their careful 
reading of the manuscript and for their valuable comments and constructive suggestions, 
which helped improve the quality of this work.}

%    Bibliographies can be prepared with BibTeX using amsplain,
%    amsalpha, or (for "historical" overviews) natbib style.
\bibliographystyle{amsplain}
%    Insert the bibliography data here.
\bibliography{DMP}

\providecommand{\bysame}{\leavevmode\hbox to3em{\hrulefill}\thinspace}
\providecommand{\MR}{\relax\ifhmode\unskip\space\fi MR }
% \MRhref is called by the amsart/book/proc definition of \MR.
\providecommand{\MRhref}[2]{%
  \href{http://www.ams.org/mathscinet-getitem?mr=#1}{#2}
}
\providecommand{\href}[2]{#2}
\begin{thebibliography}{10}

\bibitem{MR4848416}
M.~T. Bahlibi, J.~Kar\'atson, and S.~Korotov, \emph{Discrete maximum principles
  with computable mesh conditions for nonlinear elliptic finite element
  problems}, Appl. Numer. Math. \textbf{210} (2025), 222--244. \MR{4848416}

\bibitem{MR4704682}
Gabriel~R. Barrenechea, Volker John, and Petr Knobloch, \emph{Finite element
  methods respecting the discrete maximum principle for convection-diffusion
  equations}, SIAM Rev. \textbf{66} (2024), no.~1, 3--88. \MR{4704682}

\bibitem{MR4900691}
\bysame, \emph{Monotone discretizations for elliptic second order partial
  differential equations}, Springer Series in Computational Mathematics,
  vol.~61, Springer, Cham, [2025] \copyright 2025. \MR{4900691}

\bibitem{MR2373954}
Susanne~C. Brenner and L.~Ridgway Scott, \emph{The mathematical theory of
  finite element methods}, third ed., Texts in Applied Mathematics, vol.~15,
  Springer, New York, 2008. \MR{2373954}

\bibitem{MR2009948}
Eduardo Casas and Mariano Mateos, \emph{Uniform convergence of the {FEM}.
  {A}pplications to state constrained control problems}, vol.~21, 2002, Special
  issue in memory of Jacques-Louis Lions, pp.~67--100. \MR{2009948}

\bibitem{MR738731}
I.~Christie and C.~Hall, \emph{The maximum principle for bilinear elements},
  Internat. J. Numer. Methods Engrg. \textbf{20} (1984), no.~3, 549--553.
  \MR{738731}

\bibitem{MR375802}
P.~G. Ciarlet and P.-A. Raviart, \emph{Maximum principle and uniform
  convergence for the finite element method}, Comput. Methods Appl. Mech.
  Engrg. \textbf{2} (1973), 17--31. \MR{375802}

\bibitem{MR2085400}
Andrei Dr{\u{a}}g{\u{a}}nescu, Todd~F. Dupont, and L.~Ridgway Scott,
  \emph{Failure of the discrete maximum principle for an elliptic finite
  element problem}, Math. Comp. \textbf{74} (2005), no.~249, 1--23
  (electronic). \MR{2085400 (2005f:65148)}

\bibitem{evans2022partial}
Lawrence~C Evans, \emph{Partial differential equations}, vol.~19, American
  Mathematical Society, 2022.

\bibitem{MR2991837}
Istv\'an Farag\'o, J\'anos Kar\'atson, and Sergey Korotov, \emph{Discrete
  maximum principles for nonlinear parabolic {PDE} systems}, IMA J. Numer.
  Anal. \textbf{32} (2012), no.~4, 1541--1573. \MR{2991837}

\bibitem{MR4727060}
Ilaria Fontana and Daniele~A. Di~Pietro, \emph{An a posteriori error analysis
  based on equilibrated stresses for finite element approximations of
  frictional contact}, Comput. Methods Appl. Mech. Engrg. \textbf{425} (2024),
  Paper No. 116950, 26. \MR{4727060}

\bibitem{gilbarg1977elliptic}
David Gilbarg, Neil~S Trudinger, David Gilbarg, and NS~Trudinger,
  \emph{Elliptic partial differential equations of second order}, vol. 224,
  Springer, 1977.

\bibitem{MR2501643}
Antti Hannukainen, Sergey Korotov, and Tom\'a\v{s} Vejchodsk\'y, \emph{Discrete
  maximum principle for {FE} solutions of the diffusion-reaction problem on
  prismatic meshes}, J. Comput. Appl. Math. \textbf{226} (2009), no.~2,
  275--287. \MR{2501643}

\bibitem{heilbronn1949discrete}
Hans~A Heilbronn, \emph{On discrete harmonic functions}, Mathematical
  Proceedings of the Cambridge Philosophical Society, vol.~45, Cambridge
  University Press, 1949, pp.~194--206.

\bibitem{MR632125}
W.~H\"ohn and H.-D. Mittelmann, \emph{Some remarks on the discrete
  maximum-principle for finite elements of higher order}, Computing \textbf{27}
  (1981), no.~2, 145--154. \MR{632125}

\bibitem{MR2121074}
J.~Kar\'atson and S.~Korotov, \emph{Discrete maximum principles for finite
  element solutions of nonlinear elliptic problems with mixed boundary
  conditions}, Numer. Math. \textbf{99} (2005), no.~4, 669--698. \MR{2121074}

\bibitem{MR2226992}
\bysame, \emph{Discrete maximum principles for finite element solutions of some
  mixed nonlinear elliptic problems using quadratures}, J. Comput. Appl. Math.
  \textbf{192} (2006), no.~1, 75--88. \MR{2226992}

\bibitem{MR3425305}
\bysame, \emph{Some discrete maximum principles arising for nonlinear elliptic
  finite element problems}, Comput. Math. Appl. \textbf{70} (2015), no.~11,
  2732--2741. \MR{3425305}

\bibitem{MR2392463}
J\'anos Kar\'atson, Sergey Korotov, and Michal K\v{r}\'i\v{z}ek, \emph{On
  discrete maximum principles for nonlinear elliptic problems}, Math. Comput.
  Simulation \textbf{76} (2007), no.~1-3, 99--108. \MR{2392463}

\bibitem{MR4092285}
J\'anos Kar\'atson, Bal\'azs Kov\'acs, and Sergey Korotov, \emph{Discrete
  maximum principles for nonlinear elliptic finite element problems on surfaces
  with boundary}, IMA J. Numer. Anal. \textbf{40} (2020), no.~2, 1241--1265.
  \MR{4092285}

\bibitem{MR1803125}
Sergey Korotov, Michal K\v{r}\'\i\v{z}ek, and Pekka Neittaanm\"aki,
  \emph{Weakened acute type condition for tetrahedral triangulations and the
  discrete maximum principle}, Math. Comp. \textbf{70} (2001), no.~233,
  107--119. \MR{1803125}

\bibitem{korotov2010comparison}
Sergey Korotov and Tom{\'a}{\v{s}} Vejchodsk{\`y}, \emph{A comparison of
  simplicial and block finite elements}, Numerical Mathematics and Advanced
  Applications 2009: Proceedings of ENUMATH 2009, the 8th European Conference
  on Numerical Mathematics and Advanced Applications, Uppsala, July 2009,
  Springer, 2010, pp.~533--541.

\bibitem{MR3614014}
D.~Leykekhman and M.~Pruitt, \emph{On the positivity of discrete harmonic
  functions and the discrete {H}arnack inequality for piecewise linear finite
  elements}, Math. Comp. \textbf{86} (2017), no.~305, 1127--1145. \MR{3614014}

\bibitem{MR4166450}
Dmitriy Leykekhman and Buyang Li, \emph{Weak discrete maximum principle of
  finite element methods in convex polyhedra}, Math. Comp. \textbf{90} (2021),
  no.~327, 1--18. \MR{4166450}

\bibitem{MR672072}
Vitoriano Ruas~Santos, \emph{On the strong maximum principle for some piecewise
  linear finite element approximate problems of nonpositive type}, J. Fac. Sci.
  Univ. Tokyo Sect. IA Math. \textbf{29} (1982), no.~2, 473--491. \MR{672072}

\bibitem{MR551291}
Alfred~H. Schatz, \emph{A weak discrete maximum principle and stability of the
  finite element method in {$L\sb{\infty }$}\ on plane polygonal domains. {I}},
  Math. Comp. \textbf{34} (1980), no.~149, 77--91. \MR{551291}

\bibitem{MR443377}
Gilbert Strang and George~J. Fix, \emph{An analysis of the finite element
  method}, Prentice-Hall Series in Automatic Computation, Prentice-Hall, Inc.,
  Englewood Cliffs, NJ, 1973. \MR{443377}

\bibitem{troltzsch2010optimal}
Fredi Tr{\"o}ltzsch, \emph{Optimal control of partial differential equations:
  theory, methods, and applications}, vol. 112, American Mathematical Soc.,
  2010.

\bibitem{MR1753713}
Richard~S. Varga, \emph{Matrix iterative analysis}, expanded ed., Springer
  Series in Computational Mathematics, vol.~27, Springer-Verlag, Berlin, 2000.
  \MR{1753713}

\bibitem{MR2607805}
Tom\'a\v{s} Vejchodsk\'y, \emph{Higher-order discrete maximum principle for
  1{D} diffusion-reaction problems}, Appl. Numer. Math. \textbf{60} (2010),
  no.~4, 486--500. \MR{2607805}

\bibitem{MR2520861}
Tom\'a\v{s} Vejchodsk\'y and Pavel \v{S}ol\'in, \emph{Discrete maximum
  principle for {P}oisson equation with mixed boundary conditions solved by
  {$hp$}-{FEM}}, Adv. Appl. Math. Mech. \textbf{1} (2009), no.~2, 201--214.
  \MR{2520861}

\bibitem{MR2914278}
Junping Wang and Ran Zhang, \emph{Maximum principles for {$P1$}-conforming
  finite element approximations of quasi-linear second order elliptic
  equations}, SIAM J. Numer. Anal. \textbf{50} (2012), no.~2, 626--642.
  \MR{2914278}

\bibitem{MR1654022}
Jinchao Xu and Ludmil Zikatanov, \emph{A monotone finite element scheme for
  convection-diffusion equations}, Math. Comp. \textbf{68} (1999), no.~228,
  1429--1446. \MR{1654022}

\end{thebibliography}

\end{document}